\newtheorem{theo}{Theorem}[section]
\newtheorem{lemma}[theo]{Lemma}
\newtheorem{ass}{Assumption}[section]
\newtheorem{propo}{Proposition}[section]
\newtheorem{cor}{Corollary}[section]
\newcommand{\N}{\mathbb{N}}
\newcommand{\R}{\mathbb{R}}
\newcommand{\E}{\mathbb{E}}
\newcommand{\IL}{\Lambda}
\newcommand{\HH}{\mathcal{H}}
\newcommand{\Mg}{M(\gamma)}
\newcommand{\rg}{\rho(\gamma)}
\newcommand{\IW}{\mathcal{W}}
\newcommand{\Lf}{{\rm L}_f}
\newcommand{\IZ}{\mathcal{Z}}
\newcommand{\IP}{\mathcal{P}}
\newcommand{\NN}{\overline{N}}
\title{Numerical approximation of the invariant distribution for a class of stochastic damped wave equations}
\author{Ziyi Lei$^{1}$}
\address{$^{1}$ Central South University, HNP-LAMA, School of Mathematics and Statistics, Changsha, China}
\email{csu\_ziyilei@csu.edu.cn, sqgan@csu.edu.cn}
\author{Charles-Edouard Br\'ehier$^{2}$}
\address{$^{2}$ Universit\'e de Pau et des Pays de l'Adour, E2S UPPA, CNRS, LMAP, Pau, France}
\email{charles-edouard.brehier@univ-pau.fr}
\author{Siqing Gan$^{1}$}
\date{}
\begin{document}

\begin{abstract}
We study a class of stochastic semilinear damped wave equations driven by additive Wiener noise. Owing to the damping term, under appropriate conditions on the nonlinearity, the solution admits a unique invariant distribution. We apply semi-discrete and fully-discrete methods in order to approximate this invariant distribution, using a spectral Galerkin method and an exponential Euler integrator for spatial and temporal discretization respectively. We prove that the considered numerical schemes also admit unique invariant distributions, and we prove error estimates between the approximate and exact invariant distributions, with identification of the orders of convergence. To the best of our knowledge this is the first result in the literature concerning numerical approximation of invariant distributions for stochastic damped wave equations.
\end{abstract}

\keywords{stochastic damped wave equation; invariant distribution; exponential integrator; spectral Galerkin method; weak error estimates; infinite dimensional Kolmogorov equations}

\maketitle

\section{Introduction}

In the last decades, stochastic partial differential equations (SPDEs) have become the subject of intensive research, with various possible perspectives, from modelling and applications, to theoretical analysis using advanced stochastic and PDE analysis techniques. Due to the need of simulating efficiently the SPDE models, there has been a huge interest in the design and analysis of numerical methods, see for instance the monograph~\cite{LPS}. Like for all deterministic and stochastic, finite and infinite dimensional, systems, understanding the long-time behavior of the solutions and of the numerical methods is a crucial and challenging problem. In this manuscript, we provide a contribution in this direction for a class of ergodic stochastic damped semilinear wave equations. Precisely, we apply semi-discrete and fully-discrete schemes and we prove that the approximations preserve ergodicity of the exact system, and we obtain upper bounds with rates for the error between the approximate and exact invariant distributions.

In this work, we study stochastic damped semilinear wave equations driven by additive Wiener noise, which can formally be written as
\begin{equation}\label{eq:spde_intro1}
\left\lbrace
\begin{aligned}
&\partial_{t}^2u(t,x)=-2\gamma\partial_tu(t,x)+\Delta u(t,x)+f(u(t,x))+\dot{W}^Q(t,x),\\
&u(0,x)=u_0(x),~\partial_tu(0,x)=v_0(x),~\quad x\in \mathcal{D},\\
&u(t,x)=0,~\quad x\in \partial \mathcal{D},
\end{aligned}
\right.
\end{equation}
where $\mathcal{D}$ is a $d$-dimensional domain, $\Delta=\partial_{x_1}^2+\ldots+\partial_{x_d}^2$ is the Laplace operator, and homogeneous Dirichlet boundary conditions are imposed. The positive real number $\gamma\in(0,\infty)$ is the damping parameter. In addition, $u_0:\mathcal{D}\to \R$ and $v_0:\mathcal{D}\to \R$ denote the initial values of $u$ and $\partial_tu$. Finally, $f:\R\to\R$ is a globally Lipschitz nonlinearity, and $\dot{W}^Q$ denotes space-time noise, which is white in time and has covariance operator $Q$ in space. We refer to Section~\ref{sec:setting} below for precise formulations and assumptions. In this manuscript, the stochastic damped wave equation~\eqref{eq:spde_intro1} is interpreted as a stochastic evolution system with unknowns $u(t)\in H$ and $v(t)\in H^{-1}$
\begin{equation}\label{eq:spde_intro2}
\left\lbrace
     \begin{aligned}
&du(t)=v(t)dt,\\
&dv(t)=(-\IL u(t)-2\gamma v(t))\,dt+f(u(t))\,dt+\,dW^Q(t),\\
&u(0)=u_0,\  \  v(0)=v_0, 
     \end{aligned}
\right.
\end{equation}
or as a stochastic evolution equation with unknown $X(t)=(u(t),v(t))$ with values in $\HH=H\times H^{-1}$, written as
\begin{equation}\label{eq:see_intro}
\left\lbrace
     \begin{aligned}
&dX(t)=A_\gamma X(t)dt+F(X(t))dt+d\IW^Q(t),\\
&X(0)=x_0.
     \end{aligned}
\right.
\end{equation}
See Section~\ref{sec:setting} for details on the notation. We refer to the monograph~\cite{DPZ} for the theory of stochastic evolution equations, and to~\cite{MR1500166} for a chapter devoted to the analysis of the stochastic wave equation. In the setting introduced below, the stochastic evolution system~\eqref{eq:see_intro} admits a unique global mild solution, for any initial value.

Assuming that the Lipschitz constant of the nonlinearity $f$ is sufficiently small (see condition~\eqref{eq:ergo} from Assumption~\ref{ass:ergo} below), the process $\bigl(X(t)\bigr)_{t\ge 0}$ admits a unique invariant distribution denoted by $\mu_{\infty}$, see Proposition~\ref{propo:ergo-exact} below, and the monograph~\cite{DPZergo} for general results on ergodicity for stochastic evolution equations. The objective of this work is to prove that $\mu_\infty$ can be approximated using fully-discrete numerical methods, and to provide rates of convergence with respect to the spatial and temporal discretization parameters. Precisely, the spatial discretization is performed using a spectral Galerkin method with parameter denoted by $N\in\N$, and the temporal discretization is performed using an exponential Euler integrator with time-step size denoted by $\tau\in(0,1)$. We refer to Section~\ref{sec:numerics} for precise notation. The fully-discrete scheme (see Section~\ref{sec:fully}) is written as
\begin{equation}\label{eq:expoEuler_intro}
X^N_{m+1}=e^{\tau A_{\gamma}}\Bigl(X^N_m+\tau F_N(X^N_m)+\IP_N\Delta \IW^Q_m\Bigr),
\end{equation}
where $\bigl(e^{t A_\gamma}\bigr)_{t\ge 0}$ is the semigroup associated with the linear damped wave equation with no forcing. We also consider a semi-discrete scheme (see Section~\ref{sec:galerkin})
\begin{equation}\label{eq:seegalerkin_intro}
dX^N(t)=A_\gamma X^N(t)dt+F_N(X^N(t))dt+\IP_Nd\IW^Q(t)
\end{equation}
where only spatial discretization is performed.

We prove that the fully-discrete scheme~\eqref{eq:expoEuler_intro} and the semi-discrete scheme~\eqref{eq:seegalerkin_intro} admit unique invariant distributions denoted by $\mu_\infty^{N,\tau}$ and $\mu_\infty^{N}$ respectively, for any values of the spatial and temporal discretization parameters $N$ and $\tau$, see Propositions~\ref{propo:ergo-eem} and~\ref{propo:ergo-galerkin} for precise statements. The main results of this work are then stated in Section~\ref{sec:main}: for the fully-discrete scheme~\eqref{eq:expoEuler_intro} one has weak error estimates
\begin{equation}\label{eq:main_intro}
\big|\int \varphi d\mu_\infty-\int \varphi d\mu_\infty^{N,\tau}\big|\le C_{\beta}(\varphi)\Bigl( \lambda_N^{-\beta\theta}+\tau^{\min(2\theta\beta,1)}\Bigr),
\end{equation}
where $\varphi:\HH\to \R$ is an arbitrary mapping of class $\mathcal{C}^2$ with bounded first and second order derivatives, $\bigl(\lambda_n\bigr)_{n\in\N}$ are the eigenvalues of $\IL=-\Delta$ (written in non-decreasing order), and $\beta\in(0,1]$ is a parameter which describes the regularity of the noise, see~\eqref{eq:Qbeta} from Assumption~\ref{ass:Q}. Let us mention that one has $\beta\in(0,\frac12)$ for space-time white noise ($Q=I$, $d=1$) and $\beta=1$ for trace-class noise. Finally, $\theta=1$ if the covariance operator $Q$ and the linear operator $\IL=-\Delta$ commute, and $\theta=1/2$ otherwise. We refer to Theorem~\ref{theo:galerkin} for a precise statement concerning the semi-discrete scheme~\eqref{eq:seegalerkin_intro} and to Theorem~\ref{theo:eem} (and to Corollary~\ref{cor:full}) for a precise statement concerning the fully-discrete scheme~\eqref{eq:expoEuler_intro}.

The rates of convergence appearing in~\eqref{eq:main_intro} are consistent with the weak rates of convergence obtained for the stochastic wave equation for instance in~\cite{MR3353942,MR4356894,MR3884775} (without damping, meaning $\gamma=0$). To the best of our knowledge, the weak error estimates~\eqref{eq:main_intro} have not been obtained previously in the literature. One of the main auxiliary tools used to prove the weak error estimates is Proposition~\ref{propo:Phi} concerning properties of solutions of Kolmogorov equations associated with the semi-discrete scheme~\eqref{eq:seegalerkin_intro}, with estimates which are independent of the integer $N$. We refer to the monograph~\cite{Cerrai} for results and methods on the analysis of finite and infinite dimensional Kolmogorov equations.

Let us briefly review the literature related to our work. The list of references below is not exhaustive. Strong convergence rates for spatial and temporal discretization schemes applied to stochastic wave equations have been obtained for instance in the articles~\cite{MR3353942,MR3276429,MR3484400,MR2646102,MR3033008} for semi-discrete and fully-discrete methods using the interpretation of SPDEs of stochastic evolution equations from~\cite{DPZ}, which is considered in this manuscript. The works mentioned above employ exponential or trigonometric integrator for the temporal discretization, see also~\cite{JentzenKloeden,MR2728063} for a presentation of exponential integrators in the context of SPDEs. The articles~\cite{MR2224753,MR3463447} provide strong convergence analysis using the point of view of random field solutions for SPDEs, developed by~\cite{MR876085}, see also~\cite{MR1500166} for a focus on the stochastic wave equation. We also refer to the recent work~\cite{feng2022higher} where higher order methods are constructed. The authors of the preprint~\cite{CCW} consider a class of stochastic wave equations with nonlinear damping and prove strong convergence results. Weak convergence rates for numerical methods applied to stochastic wave equations have been obtained in~\cite{MR2671031}, and more recently in~\cite{MR4356894,MR3884775} and in~\cite{MR3353942}. The framework we consider below is closely related to the latter reference, however we need to use different techniques in the analysis: the author of~\cite{MR3353942} exploits the group property associated with the linear wave equation when there is no damping, whereas to take into account the damping and prove error bounds which are uniform with respect to time we only use semigroup properties.

As already mentioned above, to the best of our knowledge, in this manuscript we obtain the first results concerning the approximation of the invariant distribution for stochastic damped wave equations. However, there have been many results for different types of finite and infinite dimensional stochastic systems. Some of the arguments used in the analysis below are inspired by the articles cited below. For stochastic differential equations, one of the earliest works is~\cite{Talay}. Writing the stochastic damped wave equation~\eqref{eq:see_intro} as the system~\eqref{eq:spde_intro2} suggests an analogy with the works~\cite{Talay:02} and~\cite{MR3434031} devoted to stochastic Hamiltonian systems and Langevin systems respectively. Note that the monograph~\cite{Hong-Wang} exploits a similar point of view for the approximation of invariant distributions of stochastic nonlinear Schr\"odinger equations, see also the articles~\cite{MR3605170,MR3608750}. For further references and techniques for the design and analysis of numerical methods applied to approximate invariant distributions of stochastic differential equations, we refer for instance to~\cite{AbdulleAlmuslimaniVilmart,AbdulleVilmartZygalakis,CrisanDobsonOttobre:21,LambertonPages,LaurentVilmart,LeimkuhlerMatthewsTretyakov,MilsteinTretyakov,Vilmart}. Finally, in the last decade, there have been several articles devoted to the numerical approximation of invariant distributions for parabolic semilinear stochastic partial differential equations: we refer to~\cite{C-E:14,C-E:16_2,C-E:17,C-E:22}, to~\cite{Chen-Gan-Wang:20,Cui-Hong-Sun}, and to~\cite{MR4454935}. See also the recent preprint~\cite{GHM} concerning the analysis of long-term accuracy of numerical methods for some SPDEs. The analysis of the stochastic damped wave equation case requires new and different arguments compared with the parabolic case.

This paper is organized as follows. The setting (notation and assumptions) is introduced in Section~\ref{sec:setting}. Section~\ref{sec:spde} is then devoted to the presentation and analysis of the stochastic damped wave equation~\eqref{eq:see_intro}. The semi-discrete and fully-discrete numerical methods are defined in Sections~\ref{sec:galerkin} and~\ref{sec:fully} respectively, and the main results of this paper are stated in Section~\ref{sec:main}: see Theorem~\ref{theo:galerkin} for the spatial approximation, Theorem~\ref{theo:eem} for the temporal approximation, and Corollary~\ref{cor:full} for the fully-discrete scheme. The convergence analysis is performed in Section~\ref{sec:proofs}: note that an important regularity result for the associated Kolmogorov equation is given by Proposition~\ref{propo:Phi} in Section~\ref{sec:aux}. Sections~\ref{sec:proofgalerkin} and~\ref{sec:prooffully} are devoted to the proofs of Theorems~\ref{theo:galerkin} and~\ref{theo:eem} respectively.

\section{Setting}\label{sec:setting}

\subsection{Notation}

Let $d\in\N$ be an integer 	and let $\mathcal{D}\subset\R^d$ be an open bounded domain with polygonal boundary $\partial\mathcal{D}$, for instance $\mathcal{D}=(0,1)^d$. Define the separable Hilbert space $H=L^2(\mathcal{D})$. The inner product and the norm are denoted by $\langle \cdot,\cdot \rangle_H$ and $\|\cdot\|_H$ respectively. Let us denote by $-\IL$ the Laplace operator endowed with homogeneous Dirichlet boundary conditions. Then $\IL$ is a self-adjoint unbounded linear operator from $H$ to $H$, with domain $D(\IL)=H^2(\mathcal{D})\cap H_0^1(\mathcal{D})$, and there exists a complete orthonormal system $\bigl(e_n\bigr)_{n\in\N}$ of $H$ and a nondecreasing sequence $\bigl(\lambda_n\bigr)_{n\in\N}$ of positive real numbers such that one has
\[
\Lambda e_n=\lambda_n e_n.
\]
Note that the eigenvalue $\lambda_1$ is positive and that one has
\[
\lambda_1=\underset{n\in\N}\min~\lambda_n=\underset{x\in H\setminus\{0\}}\inf~\frac{\langle \IL x,x\rangle_H}{\|x\|_H^2}.
\]

In addition, there exists ${\rm c}_d\in(0,\infty)$ such that $\lambda_n\sim {\rm c}_dn^{2/d}$ when $n\to\infty$.

For any nonnegative real number $\alpha\in[0,+\infty)$, introduce the subspace $H^\alpha$ of $H$ defined by
\[
H^\alpha=\{u\in H;~\sum_{n=1}^{\infty}\lambda_n^{\alpha}\langle u,e_n\rangle_H^2<+\infty\},
\]
and for all $u,u_1,u_2\in H^\alpha$, set
\begin{align*}
\langle u_1,u_2 \rangle_{H^\alpha}&=\sum_{n=1}^{+\infty}\lambda_n^{\alpha}\langle u_1,e_n \rangle_H \langle u_2,e_n \rangle_H,\\
\|u\|_{H^\alpha}^2&=\langle u,u \rangle_{H^\alpha}=\sum_{n=1}^{+\infty}\lambda_n^{\alpha}\langle u,e_n \rangle_H^2.
\end{align*}
The space $H^\alpha$, equipped with the inner product $\langle \cdot,\cdot \rangle_{H^\alpha}$ and the norm $\|\cdot\|_{H^\alpha}$, is a separable Hilbert space.

For any nonnegative real number $\alpha\in[0,+\infty)$, the space $H^{-\alpha}$ is defined as the closure of the space
\[
\{u\in H;~\sum_{n=1}^{+\infty}\lambda_n^{-\alpha}\langle u,e_n\rangle_H^2<\infty\},
\]
with inner product $\langle\cdot,\cdot\rangle_{H^{-\alpha}}$ and norm $\|\cdot\|_{H^{-\alpha}}$ defined as follows: for all $u,u_1,u_2\in H^{-\alpha}$, set
\begin{align*}
\langle u_1,u_2 \rangle_{H^{-\alpha}}&=\sum_{n=1}^{+\infty}\lambda_n^{-\alpha}\langle u_1,e_n \rangle_H \langle u_2,e_n \rangle_H,\\
\|u\|_{H^{-\alpha}}^2&=\langle u,u \rangle_{H^{-\alpha}}=\sum_{n=1}^{+\infty}\lambda_n^{-\alpha}\langle u,e_n \rangle_H^2.
\end{align*}
The space $H^{-\alpha}$ equipped with the inner product $\langle \cdot,\cdot \rangle_{H^{-\alpha}}$ and the norm $\|\cdot\|_{H^{-\alpha}}$ is a separable Hilbert space. It can be identified with the dual space of $H^{\alpha}$. 

For all $\alpha\in\R$, let the linear operator $\IL^\alpha$ be defined as follows: for all $u\in H^\alpha$, set
\[
\IL^{\alpha/2}u=\sum_{n\in\N}\lambda_n^{\alpha/2}\langle u,e_n \rangle_H e_n.
\]

For all $\alpha\in\mathbb{R}$, introduce the space
\[
\HH^\alpha=H^\alpha \times H^{\alpha-1},
\]
and for all $x=(u,v), x_1=(u_1,v_1), x_2=(u_2,v_2)\in\HH^\alpha$, define
\begin{align*}
\langle x_1,x_2 \rangle_{\HH^\alpha}&=\langle (u_1,v_1),(u_2,v_2) \rangle_{\HH^\alpha}=\langle u_1,u_2 \rangle_{H^\alpha}+\langle v_1,v_2 \rangle_{H^{\alpha-1}},\\
\|x\|_{\HH^\alpha}^2&=\|(u,v)\|_{\HH^\alpha}^2=\langle x, x \rangle_{\HH^\alpha}=\|u\|_{H^\alpha}^2+\|v\|_{H^{\alpha-1}}^2.
\end{align*}
For all $\alpha\in\mathbb{R}$, the space $\HH^\alpha$, equipped with the inner product $\langle \cdot,\cdot \rangle_{\HH^\alpha}$ and the norm $\|\cdot\|_{\HH^\alpha}$, is a separable Hilbert space. When $\alpha=0$, the notation $\HH=\HH^0$, $\langle \cdot,\cdot \rangle_{\HH}=\langle \cdot,\cdot \rangle_{\HH^0}$ and $\|\cdot\|_{\HH}=\|\cdot\|_{\HH^0}$ is used in the sequel.

Observe that if $\alpha_1<\alpha_2$, one has $H^{\alpha_2}\subset H^{\alpha_1}$ and $\HH^{\alpha_2}\subset \HH^{\alpha_1}$, with continuous embeddings: for all $u\in H^{\alpha_2}$ and all $x\in\HH^{\alpha_2}$, one has
\begin{equation}\label{eq:comparenorms}
\|u\|_{H^{\alpha_1}}\le \lambda_1^{-\frac{\alpha_2-\alpha_1}{2}}\|u\|_{H^{\alpha_2}}~,\quad \|x\|_{\HH^{\alpha_1}}\le \lambda_1^{-\frac{\alpha_2-\alpha_1}{2}}\|x\|_{\HH^{\alpha_2}}.
\end{equation}
In addition, one has the following interpolation inequalities: if $\alpha_1<\alpha_2$ and $\alpha\in[\alpha_1,\alpha_2]$, for all $u\in H^{\alpha_2}$ and $x\in\HH^{\alpha_2}$, one has
\begin{equation}\label{eq:interp}
\|u\|_{H^\alpha}\le \|u\|_{H^{\alpha_1}}^{\frac{\alpha_2-\alpha}{\alpha_2-\alpha_1}}\|u\|_{H^{\alpha_2}}^{\frac{\alpha-\alpha_1}{\alpha_2-\alpha_1}}~,\quad \|x\|_{\HH^\alpha}\le \|x\|_{\HH^{\alpha_1}}^{\frac{\alpha_2-\alpha}{\alpha_2-\alpha_1}}\|x\|_{\HH^{\alpha_2}}^{\frac{\alpha-\alpha_1}{\alpha_2-\alpha_1}}.
\end{equation}

Let $\alpha\in\R$ be an arbitrary natural number. Define the projection operators $\Pi_u\colon \HH^\alpha\rightarrow H^\alpha$ and $\Pi_v\colon \HH^\alpha\rightarrow H^{\alpha-1}$ such that for all $(u,v)\in\HH^\alpha$ one has
\[
\Pi_u(u,v)=u\ \  \mbox{and} \ \ \Pi_v(u,v)=v.
\]
The operators $\Pi_u$ and $\Pi_v$ depend on $\alpha$, however the dependence is omitted to simplify notation. Note that for all $\alpha\in\R$ and all $x=(u,v)\in\HH^\alpha$, one has
\[
\max\bigl(\|\Pi_u x\|_{H^\alpha},\|\Pi_v x\|_{H^{\alpha-1}}\bigr)\le \|x\|_{\HH^\alpha}.
\]

\subsection{The damped wave equation semigroup}

Let $\gamma\in(0,\infty)$ be a positive real number. We introduce the linear operator $A_\gamma$ defined as follows: for all $\alpha\in\R$ and for all $x=(u,v)\in\HH^{\alpha+1}$, set
\[
A_\gamma x=\bigl(v,-\IL u-2\gamma v\bigr)\in \HH^{\alpha}.
\]
The linear operator $A_\gamma$ can be considered as an unbounded linear operator on $\HH$, with domain $\HH^1$.

The linear operator $A_\gamma$ generates a semigroup of linear operators $\bigl(e^{tA_\gamma}\bigr)_{t\ge 0}$ which can be defined as follows: for all $t\ge 0$ and all $x=(u,v)\in\HH$,
\begin{equation}\label{eq:semigroup}
e^{tA_\gamma}x=\sum_{n\in\N}\bigl(u_n(t)e_n,v_n(t)e_n\bigr)
\end{equation}
where for all $n\in\N$ the mappings $u_n,v_n:[0,\infty)\to\R$ are the solutions of the linear two-dimensional systems
\begin{equation}\label{eq:unvn}
\frac{d}{dt}\begin{pmatrix} u_n(t) \\ v_n(t) \end{pmatrix}=\begin{pmatrix} 0 & 1 \\ -\lambda_n & -2\gamma \end{pmatrix}\begin{pmatrix} u_n(t) \\ v_n(t) \end{pmatrix}=\begin{pmatrix} v_n(t) \\ -\lambda_n u_n(t) -2\gamma v_n(t) \end{pmatrix}~,~t\ge 0~;\quad \begin{pmatrix}u_n(0) \\ v_n(0)\end{pmatrix}=\begin{pmatrix}\langle u,e_n\rangle_H \\ \langle v,e_n\rangle_H \end{pmatrix}.
\end{equation}
Let us give the expressions of the solutions of the differential equations~\eqref{eq:unvn} above, for all $n\in\N$.
\begin{itemize}
\item If $\lambda_n-\gamma^2>0$, then for all $t\ge 0$ one has
\begin{equation}\label{eq:unvn1}
\left\lbrace
\begin{aligned}
u_n(t)&=e^{-\gamma t}\Bigl(\langle u,e_n\rangle_H~\cos(\sqrt{\lambda_n-\gamma^2} t)+\bigl(\gamma\langle u,e_n\rangle_H+\langle v,e_n\rangle_H\bigr)~\frac{\sin(\sqrt{\lambda_n-\gamma^2} t)}{\sqrt{\lambda_n-\gamma^2}}\Bigr)\\
v_n(t)&=e^{-\gamma t}\Bigl(\langle v,e_n\rangle_H~\cos(\sqrt{\lambda_n-\gamma^2} t)-\bigl(\lambda_n\langle u,e_n\rangle_H+\gamma\langle v,e_n\rangle_H\bigr)~\frac{\sin(\sqrt{\lambda_n-\gamma^2} t)}{\sqrt{\lambda_n-\gamma^2}}\Bigr).
\end{aligned}
\right.
\end{equation}
\item If $\lambda_n-\gamma^2=0$, then for all $t\ge 0$ one has
\begin{equation}\label{eq:unvn2}
\left\lbrace
\begin{aligned}
u_n(t)&=e^{-\gamma t}\Bigl(\langle u,e_n\rangle_H+\bigl(\gamma\langle u,e_n\rangle_H+\langle v,e_n\rangle_H\bigr)~t\Bigr)\\
v_n(t)&=e^{-\gamma t}\Bigl(\langle v,e_n\rangle_H-\bigl(\lambda_n\langle u,e_n\rangle_H+\gamma\langle v,e_n\rangle_H\bigr)~ t\Bigr).
\end{aligned}
\right.
\end{equation}
\item If $\lambda_n-\gamma^2<0$, then for all $t\ge 0$ one has
\begin{equation}\label{eq:unvn3}
\left\lbrace
\begin{aligned}
u_n(t)&=e^{-\gamma t}\Bigl(\langle u,e_n\rangle_H~\cosh(\sqrt{\gamma^2-\lambda_n} t)+\bigl(\gamma\langle u,e_n\rangle_H+\langle v,e_n\rangle_H\bigr)~\frac{\sinh(\sqrt{\gamma^2-\lambda_n} t)}{\sqrt{\gamma^2-\lambda_n}}\Bigr)\\
v_n(t)&=e^{-\gamma t}\Bigl(\langle v,e_n\rangle_H~\cosh(\sqrt{\gamma^2-\lambda_n} t)-\bigl(\lambda_n\langle u,e_n\rangle_H+\gamma\langle v,e_n\rangle_H\bigr)~\frac{\sinh(\sqrt{\gamma^2-\lambda_n} t)}{\sqrt{\gamma^2-\lambda_n}}\Bigr).
\end{aligned}
\right.
\end{equation}
\end{itemize}
The expressions above can be found using the following observation: the auxiliary mapping $w_n:t\mapsto e^{\gamma t}u_n(t)$ is solution of the linear second-order differential equation
\[
w_n''(t)+\bigl(\lambda_n-\gamma^2\bigr)w_n(t)=0,
\]
with initial values $w_n(0)=u_n(0)$ and $w_n'(0)=u_n'(0)+\gamma u_n(0)$. 

Note that for all $n\in\N$ and all $t\ge 0$, one has
\begin{equation}\label{eq:ineqdtunvn}
\frac12\frac{d}{dt}\Bigl(|u_n(t)|^2+\lambda_n^{-1}|v_n(t)|^2\Bigr)=-2\gamma \lambda_n^{-1}|v_n(t)|^2\le 0,
\end{equation}
which provides the following property for the semigroup $\bigl(e^{tA_\gamma}\bigr)_{t\ge 0}$: for all $\gamma\in(0,\infty)$, all $t\ge 0$ and all $x\in\HH$, one has
\begin{equation}\label{eq:boundsemigroup}
\|e^{tA_\gamma}x\|_\HH\le \|x\|_\HH.
\end{equation}
In order to study the long time behavior of exact and approximate solutions of the stochastic damped wave equation, the following property of the semigroup $\bigl(e^{tA_\gamma}\bigr)_{t\ge 0}$ is required.
\begin{lemma}\label{lem:semigroup}
Let $\gamma\in(0,\infty)$. There exist $\Mg\in(0,\infty)$ and $\rg\in(0,\infty)$ such that for all $t\in[0,\infty)$, all $\alpha\in\R$ and all $x\in\HH^\alpha$, one has
\begin{equation}\label{eq:expodecrease}
\|e^{tA_\gamma}x\|_{\HH^\alpha}\le \Mg e^{-\rg t}\|x\|_{\HH^\alpha}.
\end{equation}
\end{lemma}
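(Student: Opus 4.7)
The plan is to diagonalise the semigroup in the Fourier basis $(e_n)_{n\in\N}$ and reduce the problem to a mode-wise exponential decay estimate that is uniform in $n$. From~\eqref{eq:semigroup} one has
\[
\|e^{tA_\gamma}x\|_{\HH^\alpha}^2=\sum_{n\in\N}\lambda_n^{\alpha}\bigl(|u_n(t)|^2+\lambda_n^{-1}|v_n(t)|^2\bigr),
\]
and similarly for $\|x\|_{\HH^\alpha}^2$ in terms of $(u_n(0),v_n(0))=(\langle u,e_n\rangle_H,\langle v,e_n\rangle_H)$. It therefore suffices to exhibit constants $\Mg,\rg\in(0,\infty)$, independent of both $n$ and $\alpha$, such that the mode energies $E_n(t):=|u_n(t)|^2+\lambda_n^{-1}|v_n(t)|^2$ satisfy $E_n(t)\le \Mg^2 e^{-2\rg t}E_n(0)$; summing against the weights $\lambda_n^{\alpha}$ then yields~\eqref{eq:expodecrease}.

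Rather than relying on the explicit formulas~\eqref{eq:unvn1}--\eqref{eq:unvn3}---whose coefficients $(\lambda_n-\gamma^2)^{-1/2}$ are singular as $\lambda_n\to\gamma^2$ and would force a delicate three-way case split---I would use a modified Lyapunov function argument. For a small parameter $\beta>0$ to be chosen, set $V_n(t)=E_n(t)+\beta\lambda_n^{-1}u_n(t)v_n(t)$. Since $\lambda_n\ge\lambda_1>0$, Young's inequality yields $|\beta\lambda_n^{-1}u_n v_n|\le\frac{\beta}{2}|u_n|^2+\frac{\beta}{2\lambda_1}\lambda_n^{-1}|v_n|^2$, so $V_n$ is equivalent to $E_n$ uniformly in $n$ as soon as $\beta\le\min(1,\lambda_1)$. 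Extending the computation behind~\eqref{eq:ineqdtunvn}, in which the cross terms $\pm 2u_n v_n$ in $\dot E_n$ cancel, a direct calculation gives
\[
\tfrac{d}{dt}V_n=(\beta-4\gamma)\lambda_n^{-1}|v_n|^2-\beta|u_n|^2-2\beta\gamma\lambda_n^{-1}u_n v_n.
\]

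Applying Young's inequality to the cross term with weight $\epsilon=\lambda_1/(2\gamma)$ and using $\lambda_1/\lambda_n\le 1$ gives $|2\beta\gamma\lambda_n^{-1}u_n v_n|\le\frac{\beta}{2}|u_n|^2+\frac{2\beta\gamma^2}{\lambda_1}\lambda_n^{-1}|v_n|^2$. A further restriction $\beta\le 2\gamma\lambda_1/(\lambda_1+2\gamma^2)$ then yields $\frac{d}{dt}V_n\le -\frac{\beta}{2}|u_n|^2-2\gamma\lambda_n^{-1}|v_n|^2\le -c(\gamma)V_n$ for some $c(\gamma)>0$ independent of $n$. Gronwall's inequality combined with the equivalence $V_n\sim E_n$ gives the mode-wise estimate, and summation against $\lambda_n^\alpha$ finishes the proof. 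The main obstacle is uniformity in $n$: the Young term from the cross part carries a factor $\lambda_n^{-1}$ that degrades on the low modes, and it is precisely the spectral gap $\lambda_1>0$ that permits a choice of $\epsilon$ controlling it uniformly. This is consistent with the picture emerging from~\eqref{eq:unvn3}, according to which the slowest exponential rate in the semigroup comes from the lowest overdamped mode (if any) and equals $\gamma-\sqrt{(\gamma^2-\lambda_1)_+}>0$, so one expects the optimal $\rg$ to be of this order.
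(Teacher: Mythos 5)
Your argument is correct, and it takes a genuinely different route from the paper's proof. The paper works mode by mode with the explicit exponential tilting $\tilde{w}_{n,\gamma}(t)=e^{\theta_{n,\gamma}t}u_n(t)$, where $\theta_{n,\gamma}=\min(\lambda_n,\gamma^2)/(2\gamma)$, derives two energy identities for the resulting second-order ODE, and then splits into the two cases $\gamma^2\le\lambda_n$ and $\lambda_n\le\gamma^2$. You instead use a single perturbed energy functional $V_n=E_n+\beta\lambda_n^{-1}u_nv_n$ (a hypocoercivity-type argument), and your computations check out: $\tfrac{d}{dt}\bigl(\lambda_n^{-1}u_nv_n\bigr)=\lambda_n^{-1}|v_n|^2-|u_n|^2-2\gamma\lambda_n^{-1}u_nv_n$ together with $\tfrac{d}{dt}E_n=-4\gamma\lambda_n^{-1}|v_n|^2$ gives exactly the identity you state; the Young estimate with the stated constants is valid because $\lambda_n^{-1}\le\lambda_1^{-1}$; the restriction $\beta\le 2\gamma\lambda_1/(\lambda_1+2\gamma^2)$ does yield a dissipation term controlling $E_n$, hence $V_n$, uniformly in $n$; and the equivalence $\tfrac12 E_n\le V_n\le\tfrac32 E_n$ under $\beta\le\min(1,\lambda_1)$ converts the Gronwall bound on $V_n$ into the mode-wise bound $E_n(t)\le \Mg^2e^{-2\rg t}E_n(0)$ with constants independent of $n$, so summation against $\lambda_n^\alpha$ gives~\eqref{eq:expodecrease} for every $\alpha$. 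What each approach buys: yours is shorter, avoids any case distinction, and extends more readily to situations where explicit mode solutions are unavailable; the paper's tilting makes the decay rate of each individual mode explicit (essentially $\theta_{n,\gamma}$), so the resulting $\rg$ is directly traceable to $\min(\lambda_1,\gamma^2)/(2\gamma)$, whereas your rate is whatever the choice of $\beta$ delivers. Neither argument claims the sharp rate $\gamma-\sqrt{(\gamma^2-\lambda_1)_+}$ you mention at the end; note only that in the critical case $\lambda_n=\gamma^2$ the polynomial factor in~\eqref{eq:unvn2} means this supremal rate need not be attained with a finite constant, which is precisely why a strict inequality (or a smaller $\rg$) is needed there --- your Lyapunov argument sidesteps this issue automatically.
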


The proof of Lemma~\ref{lem:semigroup}, is postponed to the appendix, see Section~\ref{sec:prooflemmasemigroup}. The techniques of the proof are similar to those used in~\cite[Lemma 5.1]{Salins:19}. Note that if the condition $\lambda_1>\gamma^2$ is satisfied, then it suffices to use the expression~\eqref{eq:unvn1} of the solution $(u_n(t),v_n(t))$ at time $t\ge 0$ of~\eqref{eq:unvn} to obtain a simpler proof of~\eqref{eq:expodecrease}. The proof provided in Section~\ref{sec:prooflemmasemigroup} covers the general case.

\begin{lemma}\label{lem:regulsemigroup}
For all $\alpha\in[0,1]$, there exists $C(\alpha,\gamma)\in(0,\infty)$ such that for all $t\in[0,1]$ and all $x\in\HH^\alpha$, one has
\begin{equation}\label{eq:regulsemigroup}
\|e^{tA_\gamma}x-x\|_{\HH}\le C(\alpha,\gamma)t^\alpha\|x\|_{\HH^\alpha}.
\end{equation}
\end{lemma}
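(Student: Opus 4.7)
I would prove the estimate mode by mode in the eigenbasis of $\IL$, reducing to two endpoint bounds that I can then interpolate. For $x=(u,v)\in\HH^\alpha$, set $u_n^0=\langle u,e_n\rangle_H$ and $v_n^0=\langle v,e_n\rangle_H$, and let $(u_n(t),v_n(t))$ solve the mode-$n$ system~\eqref{eq:unvn}. By~\eqref{eq:semigroup},
\[
\|e^{tA_\gamma}x-x\|_\HH^2=\sum_{n\in\N}\Bigl(|u_n(t)-u_n^0|^2+\lambda_n^{-1}|v_n(t)-v_n^0|^2\Bigr).
\]
Set $E_n(s)=|u_n(s)|^2+\lambda_n^{-1}|v_n(s)|^2$; by~\eqref{eq:ineqdtunvn}, $E_n$ is nonincreasing, so $E_n(s)\le E_n(0)$ for all $s\ge 0$.

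First, I derive two estimates per mode. The time-uniform one follows from $(a-b)^2\le 2a^2+2b^2$ combined with $E_n(t)\le E_n(0)$:
\[
|u_n(t)-u_n^0|^2+\lambda_n^{-1}|v_n(t)-v_n^0|^2\le 2E_n(t)+2E_n(0)\le 4E_n(0).
\]
The first-order-in-$t$ one follows by integrating $u_n'(s)=v_n(s)$ and $v_n'(s)=-\lambda_n u_n(s)-2\gamma v_n(s)$ over $[0,t]$ and using the pointwise bounds $|u_n(s)|\le E_n(0)^{1/2}$ and $|v_n(s)|\le\lambda_n^{1/2}E_n(0)^{1/2}$: one gets $|u_n(t)-u_n^0|\le t\lambda_n^{1/2}E_n(0)^{1/2}$ and $|v_n(t)-v_n^0|\le t(\lambda_n+2\gamma\lambda_n^{1/2})E_n(0)^{1/2}$. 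Invoking $\lambda_n\ge\lambda_1>0$ to absorb $2\gamma\lambda_n^{1/2}$ into $\lambda_n$ (with a constant depending only on $\gamma$ and $\lambda_1$), this yields
\[
|u_n(t)-u_n^0|^2+\lambda_n^{-1}|v_n(t)-v_n^0|^2\le C(\gamma)\,t^2\lambda_n E_n(0).
\]

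Finally, I interpolate at the mode level using the elementary inequality $\min(A,B)\le A^{1-\alpha}B^\alpha$ with $A=4E_n(0)$ and $B=C(\gamma)t^2\lambda_n E_n(0)$, and the clean algebraic identity
\[
E_n(0)^{1-\alpha}\bigl(\lambda_n E_n(0)\bigr)^\alpha=\lambda_n^\alpha E_n(0)=\lambda_n^\alpha|u_n^0|^2+\lambda_n^{\alpha-1}|v_n^0|^2,
\]
which exactly reproduces the mode-$n$ weight of $\|x\|_{\HH^\alpha}^2$. This gives
\[
|u_n(t)-u_n^0|^2+\lambda_n^{-1}|v_n(t)-v_n^0|^2\le C(\alpha,\gamma)\,t^{2\alpha}\bigl(\lambda_n^\alpha|u_n^0|^2+\lambda_n^{\alpha-1}|v_n^0|^2\bigr);
\]
summing over $n$ and taking square roots concludes. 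The main (modest) obstacle is the $t^2$ bound: the raw estimate produces terms with $\lambda_n$ and $\sqrt{\lambda_n}$ prefactors, and keeping the constant independent of $n$ relies crucially on the positivity of $\lambda_1$ to convert the subdominant $\sqrt{\lambda_n}$-contribution into a multiple of $\lambda_n$.
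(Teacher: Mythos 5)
Your proof is correct and follows essentially the same route as the paper: the monotonicity of $E_n(s)=|u_n(s)|^2+\lambda_n^{-1}|v_n(s)|^2$ from~\eqref{eq:ineqdtunvn} gives the time-uniform bound, the fundamental theorem of calculus applied to each mode gives the $O(t^2\lambda_n)$ bound, and interpolating the two at the level of each mode produces exactly the weight $\lambda_n^\alpha E_n(0)$ of $\|x\|_{\HH^\alpha}^2$. The only cosmetic difference is that you make explicit the use of $\lambda_n\ge\lambda_1$ to absorb the $2\gamma\lambda_n^{1/2}$ contribution, a step the paper leaves implicit in its constant $C(\gamma)$.
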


\begin{proof}
Let $n\in\N$, and let $t\mapsto (u_n(t),v_n(t))$ denote the solution of~\eqref{eq:unvn}. On the one hand, the inequality
\[
|u_n(t)|^2+\lambda_n^{-1}|v_n(t)|^2\le |u_n(0)|^2+\lambda_n^{-1}|v_n(0)|^2,
\]
which follows from the inequality~\eqref{eq:ineqdtunvn} above, implies that for all $t\ge 0$ one has
\[
|u_n(t)-u_n(0)|^2+\lambda_n^{-1}|v_n(t)-v_n(0)|^2\le 4|u_n(0)|^2+4\lambda_n^{-1}|v_n(0)|^2.
\]
On the other hand, using the fundamental theorem of calculus and the upper bound above gives
\begin{align*}
|u_n(t)-u_n(0)|^2&=\Bigl(\int_0^t v_n(s)ds\Bigr)^2\le t^2\lambda_n\bigl(|u_n(0)|^2+\lambda_n^{-1}|v_n(0)|^2\bigr)\\
\lambda_n^{-1}|v_n(t)-v_n(0)|^2&=\lambda_n^{-1}\Bigl(\int_0^t \bigl(2\gamma v_n(s)+\lambda_nu_n(s)\bigr)ds\Bigr)^2\le C(\gamma)t^2\lambda_n\bigl(|u_n(0)|^2+\lambda_n^{-1}|v_n(0)|^2\bigr)
\end{align*}
for some positive real number $C(\gamma)\in(0,\infty)$.

Let $\alpha\in[0,1]$, then combining the two inequalities above, for all $t\ge 0$ and $n\in\N$ one has
\[
|u_n(t)-u_n(0)|^2+\lambda_n^{-1}|v_n(t)-v_n(0)|^2\le C(\alpha,\gamma)t^{2\alpha}\lambda_n^\alpha \bigl(|u_n(0)|^2+\lambda_n^{-1}|v_n(0)|^2\bigr),
\]
with $C(\alpha,\gamma)=4^{1-\alpha}C(\gamma)^\alpha$. It remains to see that for all $t\in[0,1]$ and all $x\in\HH^\alpha$ one has
\begin{align*}
\|e^{tA_\gamma}x-x\|^2_{\HH}&=\sum_{n\in\N}\Bigl(|u_n(t)-u_n(0)|^2+\lambda_n^{-1}|v_n(t)-v_n(0)|^2\Bigr)\\
&\le C(\alpha,\gamma)t^{2\alpha}\sum_{n\in\N}\lambda_n^\alpha \bigl(|u_n(0)|^2+\lambda_n^{-1}|v_n(0)|^2\bigr)\\
&=C(\alpha,\gamma)t^{2\alpha}\|x\|_{\HH^\alpha}^2,
\end{align*}
to conclude the proof.
\end{proof}

\subsection{Nonlinearity}

The nonlinearity is given by a mapping $f:u\in H\mapsto f(u)\in H$. Given such a function $f$, the mapping $F:\HH\to\HH^1$ is then defined as follows: for all $(u,v)\in\HH$,
\begin{equation}\label{eq:F}
F(u,v)=\bigl(0,f(u)\bigr).
\end{equation}
The mapping $f$ is assumed to be globally Lipschitz continuous, with Lipschitz constant $\Lf$ defined by
\[
\Lf=\underset{u_1,u_2\in H,u_1\neq u_2}\sup~\frac{\|f(u_2)-f(u_1)\|_H}{\|u_2-u_1\|_H}\in[0,\infty).
\]
Due to the definitions of the nonlinearity $F$ and of the norms $\|\cdot\|_\HH$ and $\|\cdot\|_{\HH^1}$, one then has the following inequalities: for all $x_1=(u_1,v_1),x_2=(u_2,v_2)\in \HH$,
\begin{align}
\|F(x_2)-F(x_1)\|_{\HH^{1}}&= \|f(u_2)-f(u_1)\|_{H} \le \Lf\|u_2-u_1\|_{H} \le\Lf\|x_2-x_1\|_{\HH}, \label{eq:LipF_HH1}\\
\|F(x_2)-F(x_1)\|_{\HH}&\le \lambda_1^{-\frac12}\|F(x_2)-F(x_1)\|_{\HH^{1}} \le \frac{\Lf}{\sqrt{\lambda_1}}\|x_2-x_1\|_{\HH}\label{eq:LipF_HH0}.
\end{align}

The inequalities~\eqref{eq:LipF_HH1} and~\eqref{eq:LipF_HH0} show that the mappings $F:\HH\to\HH^1$ and $F:\HH\to\HH$ are globally Lipschitz continuous.

The analysis of the long time behavior of the exact and approximate solutions of the considered stochastic damped wave equation is performed under the following condition which is assumed to be satisfied in the article.
\begin{ass}\label{ass:ergo}
Let $\gamma\in(0,\infty)$ be given, then the Lipschitz constant $\Lf$ of $f$ is assumed to satisfy the condition 
\begin{equation}\label{eq:ergo}
\Mg \Lf<\rg \sqrt{\lambda_1}
\end{equation}
where $\Mg,\rg\in(0,\infty)$ are given by Lemma~\ref{lem:semigroup}.
\end{ass}

In order to simplify notation in the sequel, it is convenient to set
\begin{equation}\label{eq:ergo2}
\rho(\gamma,f)=\rg-\frac{\Mg \Lf}{\sqrt{\lambda_1}}
\end{equation}
which is a positive real number owing to Assumption~\ref{ass:ergo}.

It remains to state the regularity conditions imposed on the nonlinearity $f$. Similar conditions are for instance considered in~\cite{MR3353942}, and can be found in the literature. They are satisfied for Nemytskii operators.
\begin{ass}\label{ass:F}
It is assumed that $f:H\to H^{-1}$ is of class $\mathcal{C}^2$ with bounded first and second derivatives.

In addition, it is assumed that for all $\alpha\in[0,1]$, there exists $C_\alpha\in(0,\infty)$ such that for all $u\in H^\alpha$ one has
\begin{equation}\label{eq:falpha}
\|f(u)\|_{H^\alpha}\le C_\alpha\bigl(1+\|u\|_{H^\alpha}\bigr),
\end{equation}
and it is assumed that for all $\alpha\in[0,1/2)$, there exists $C_\alpha\in(0,\infty)$ such that for all $u_1,u_2\in H^\alpha$ one has
\begin{equation}\label{eq:Dfalpha}
\|f(u_2)-f(u_1)\|_{H^{-1}}\le C_\alpha\bigl(1+\|u_1\|_{H^\alpha}+\|u_2\|_{H^{\alpha}}\bigr)\|u_2-u_1\|_{H^{-\alpha}}.
\end{equation}
\end{ass}

The condition~\eqref{eq:falpha} is useful to prove moment bounds in the $\|\cdot\|_{\HH^\alpha}$ norm (see Proposition~\ref{propo:momentbounds-exact}. The condition~\eqref{eq:Dfalpha} is useful in the proof of Theorem~\ref{theo:eem}.

Let us state some useful consequences of Assumption~\ref{ass:F}. First, the mapping $F:\HH\to \HH$ defined by~\eqref{eq:F} is of class $\mathcal{C}^2$ with bounded first and second derivatives: there exists $C_F\in(0,\infty)$ such that for all $x,h,k\in \HH$ one has
\begin{equation}
\|DF(x).h\|_\HH\le C_F\|h\|_\HH~,\quad \|D^2F(x).(h,k)\|_\HH\le C_F\|h\|_\HH \|k\|_\HH.
\end{equation}
Moreover, the condition~\eqref{eq:falpha} implies the following properties:
\begin{itemize}
\item if $\alpha\in[1,2]$, for all $x\in\HH^\alpha$ one has
\begin{equation}\label{eq:Falpha}
\|F(x)\|_{\HH^\alpha}\le C_\alpha\bigl(1+\|x\|_{\HH^{\alpha/2}}\bigr).
\end{equation}
\item if $\alpha\in[0,1]$, for all $x\in\HH$ one has
\begin{equation}\label{eq:Falphabis}
\|F(x)\|_{\HH^\alpha}\le C_0\bigl(1+\|x\|_{\HH}\bigr).
\end{equation}
\end{itemize}
The proof of the inequality~\eqref{eq:Falpha} is straightforward: if $x=(u,v)\in\HH^\alpha$, using the inequality~\eqref{eq:falpha} one has
\[
\|F(x)\|_{\HH^\alpha}=\|f(u)\|_{H^{\alpha-1}}\le C_{\alpha}\bigl(1+\|u\|_{H^{\alpha-1}}\bigr)\le C_\alpha\bigl(1+\|x\|_{\HH^{\alpha-1}}\bigr)\le C_\alpha\bigl(1+\|x\|_{\HH^{\alpha/2}}\bigr),
\]
using the inequality $\alpha-1\le \alpha/2$ in the last step. In addition, to prove the inequality~\eqref{eq:Falphabis}, it suffices to observe that for all $x\in \HH$ one has $\|F(x)\|_{\HH^\alpha}\le \|F(x)\|_{\HH^1}\le \|f(u)\|_H$.

Finally, the condition~\eqref{eq:Dfalpha} from Assumption~\ref{ass:F} implies the following result: if $\beta\in[\frac12,1]$, for all $x\in \HH^\beta$, one has
\begin{equation}\label{eq:Dfalpha2}
\|f(u_2)-f(u_1)\|_{H^{-1}}\le C_\beta\bigl(1+\|u_1\|_{H^\beta}+\|u_2\|_{H^{\beta}}\bigr)\|u_2-u_1\|_{H^{\beta-1}}.
\end{equation}
That result follows from applying~\eqref{eq:Dfalpha} with $\alpha=1-\beta$, noting that $\beta\ge 1-\beta$ and using the inequality~\eqref{eq:comparenorms}.

\subsection{The Wiener process}

Let $\bigl(\Omega,\mathcal{F},\mathbb{P})$ be a probability space, where the expectation operator is denoted by $\E[\cdot]$. A filtration $\bigl(\mathcal{F}_t\bigr)_{t\ge 0}$ satisfying the usual conditions is considered.

The stochastic evolution equation considered in this paper is driven by a $\HH$-valued $Q$-Wiener process $\bigl(\IW^Q(t)\bigr)_{t\ge 0}$, adapted to the filtration, which has the expression
\begin{equation}\label{eq:IWQ}
\IW^Q(t)=\bigl(0,W^Q(t)\bigr),\quad \forall~t\ge 0,
\end{equation}
where $\bigl(W^Q(t)\bigr)_{t\ge 0}$ is a $Q$-Wiener process taking values in $H^{-1}$, which can be described as follows: there exists a sequence $\bigl(\beta_n\bigr)_{n\in\N}$ of independent standard real-valued Wiener processes, adapted to the filtration $\bigl(\mathcal{F}_t\bigr)_{t\ge 0}$ a sequence $\bigl(q_n\bigr)_{n\in\N}$ of nonnegative real numbers and a complete orthonormal system $\bigl(e_n^Q\bigr)_{n\in\N}$ of $H$, such that for all $t\ge 0$ one has
\begin{equation}\label{eq:WQ}
W^Q(t)=\sum_{n\in\N}\sqrt{q_n}\beta_n(t)e_n^Q,\quad \forall~t\ge 0.
\end{equation}
To ensure that the Wiener process $\bigl(W^Q(t)\bigr)_{t\ge 0}$ takes values in $H^{-1}$, and therefore that the Wiener process $\bigl(\IW^Q(t)\bigr)_{t\ge 0}$ takes values in $\mathcal{H}$, the condition
\begin{equation}\label{eq:conditionQ}
\sum_{n\in\N}q_n\|e_n^Q\|_{H^{-1}}^2<\infty
\end{equation}
is imposed. Introduce the linear operators $Q$ and $Q^{1/2}$, defined by
\begin{align*}
Qh&=\sum_{n\in\N} q_n \langle h,e_n^Q\rangle_{H}e_n^Q,\\
Q^{1/2}h&=\sum_{n\in\N} \sqrt{q_n} \langle h,e_n^Q\rangle_{H}e_n^Q.
\end{align*}
The condition~\eqref{eq:conditionQ} above means that $Q^{1/2}$ is an Hilbert--Schmidt linear operator from $H$ to $H^{-1}$: one has
\[
\|Q^{1/2}\|_{\mathcal{L}_2(H,H^{-1})}^2=\sum_{n\in\N}\|Q^{1/2}e_n^Q\|_{H^{-1}}^2=\sum_{n\in\N}q_n\|e_n^Q\|_{H^{-1}}^2<\infty.
\]

In the sequel the following condition, which is stronger than~\eqref{eq:conditionQ}, is imposed.
\begin{ass}\label{ass:Q}
It is assumed that there exists $\beta\in(0,1]$ such that
\begin{equation}\label{eq:Qbeta}
\|Q^{1/2}\|_{\mathcal{L}_2(H,H^{\beta-1})}^2=\sum_{n\in\N}\|Q^{1/2}e_n^Q\|_{H^{\beta-1}}^2=\sum_{n\in\N}q_n\|e_n^Q\|_{H^{\beta-1}}^2<\infty.
\end{equation}
\end{ass}

If Assumption~\ref{ass:Q} holds, and if $\beta\in(0,1]$ is such that the condition~\eqref{eq:Qbeta} holds, then the Wiener process $\bigl(\IW^Q(t)\bigr)_{t\ge 0}$ takes values in $\HH^\beta$. In this case, for all $p\in[1,\infty)$ there exists $C_{\beta,p}\in(0,\infty)$ such that for all $t,s\ge 0$ one has
\begin{equation}\label{eq:incrementsWQ}
\E[\|\IW^Q(t)-\IW^Q(s)\|_{\HH^\beta}^p]=C_{\beta,p}(t-s)^{\frac{p}{2}}.
\end{equation}

If the complete orthonormal systems $\bigl(e_n\bigr)_{n\in\N}$ and $\bigl(e_n^Q\bigr)_{n\in\N}$ of $H$ coincide, then the linear operator $\IL$ and the covariance operator $\IL$ commute. In the sequel, this situation is referred to as the \emph{commutative noise case}. On the contrary, the expression \emph{non-commutative noise case} is used if the commutativity assumption does not hold. Many arguments in the analysis hold in both cases, however it is sometimes necessary to treat the two cases separately. In fact, the orders of convergence obtained below are higher in the commutative noise case. A parameter $\theta$ is introduced in the statements and proofs below: we set $\theta=1$ in the commutative noise case, and $\theta=1/2$ in the non-commutative noise case.

Let us give examples of covariance operators $Q$ and give the admissible values of $\beta$ in Assumption~\ref{ass:Q} for these examples. First, if $Q$ is a trace-class linear operator from $H$ to $H$, then the condition~\eqref{eq:Qbeta} is satisfied for $\beta=1$ (and for all $\beta\in[0,1]$ as a consequence of~\eqref{eq:comparenorms}). Second, if $Q$ is the identity operator, the choice of the complete orthonormal system $\bigl(e_n^Q\bigr)_{n\in\N}$ is arbitrariy so one can choose $e_n^Q=e_n$ for all $n\in\N$, and one has the commutative noise situation.  In that case the condition~\eqref{eq:Qbeta} is satisfied if and only if $\sum_{n\in\N}\lambda_n^{\beta-1}<\infty$. On the one hand, if $d=1$, then one has $\lambda_n\sim {\rm c}_dn^2$ and the condition~\eqref{eq:Qbeta} is satisfied if and only if $\beta<1/2$. On the other hand, if $d\ge 2$, then one has $\lambda_n\sim {\rm c}_dn^{2/d}$ and the condition~\eqref{eq:Qbeta} is not satisfied, for any $\beta\ge 0$.

We refer to for instance~\cite[Chapter~4]{DPZ} for the theory of stochastic integration in Hilbert spaces.

\section{The stochastic damped wave equation}\label{sec:spde}

In this work we consider the stochastic evolution equation
\begin{equation}\label{eq:spde}
\left\lbrace
     \begin{aligned}
&du(t)=v(t)dt,\\
&dv(t)=(-\IL u(t)-2\gamma v(t))\,dt+f(u(t))\,dt+\,dW^Q(t),\\
&u(0)=u_0,\  \  v(0)=v_0, 
     \end{aligned}
\right.
\end{equation}
where the unknowns $\bigl(u(t)\bigr)_{t\ge 0}$ and $\bigl(v(t)\bigr)_{t\ge 0}$ are $H$ and $H^{-1}$-valued stochastic processes respectively. In the sequel, we consider the equivalent formulation
\begin{equation}\label{eq:see}
\left\lbrace
     \begin{aligned}
&dX(t)=A_\gamma X(t)dt+F(X(t))dt+d\IW^Q(t),\\
&X(0)=x_0,  
     \end{aligned}
\right.
\end{equation}
where the unknown $\bigl(X(t)\bigr)_{t\ge 0}$ is a $\HH$-valued stochastic processes, and where $X(t)=(u(t),v(t))$ for all $t\ge 0$. The linear operator $A_\gamma$, the nonlinearity $F$ and the Wiener process $\IW^Q$ have been introduced in Section~\ref{sec:setting}. The initial value $x_0=(u_0,v_0)$ is a $\mathcal{F}_0$-measurable $\HH$-valued Gaussian random variable.

Let us recall that a $\HH$-valued stochastic process $\bigl(X(t)\bigr)_{t\ge 0}$ (adapted to the filtration $\bigl(\mathcal{F}_t\bigr)_{t\ge 0}$ and with continuous trajectories) is a mild solution of the stochastic evolution equation~\eqref{eq:see} with initial value $X(0)=x_0$, if for all $t\ge 0$ almost surely one has
\begin{equation}\label{eq:mild}
X(t)=e^{tA_{\gamma}}x_0+\int_0^te^{(t-s)A_{\gamma}}F(X(s))\,ds+\int_0^te^{(t-s)A_{\gamma}}\,d\IW^Q(s),
\end{equation}
where the semigroup $\bigl(e^{tA_\gamma}\bigr)_{t\ge 0}$ is given by~\eqref{eq:semigroup}. In order to study well-posedness, regularity properties and long-time behavior of mild solutions of~\eqref{eq:see}, it is first necessary to analyze the properties of the stochastic convolution, defined by
\begin{equation}\label{eq:convolution}
\IZ(t)=\int_0^te^{(t-s)A_{\gamma}}\,d\IW^Q(s),\ \  t\geq 0.
\end{equation}
One has the following result.
\begin{lemma}\label{lem:convolution}
Let Assumption~\ref{ass:Q} be satisfied. For any $\beta\in(0,1]$ such that the condition~\eqref{eq:Qbeta} holds, for all $p\in[1,\infty)$, one has
\begin{equation}\label{eq:momentboundsconvolution}
\underset{t\ge 0}\sup~\E[\|\IZ(t)\|_{\HH^\beta}^{p}]<\infty.
\end{equation} 
\end{lemma}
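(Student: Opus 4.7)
The plan is to use the Itô isometry for stochastic integrals in Hilbert spaces, combined with the exponential decay estimate of Lemma~\ref{lem:semigroup} applied with $\alpha=\beta$, and then exploit the fact that $\IZ(t)$ is a Gaussian random variable in $\HH^\beta$ to deduce bounds for all $p\in[1,\infty)$ from the case $p=2$.

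First I would expand the stochastic convolution using the representation~\eqref{eq:WQ} of the Wiener process: since $\IW^Q(t)=(0,W^Q(t))$ and $W^Q(t)=\sum_{n\in\N}\sqrt{q_n}\beta_n(t)e_n^Q$ with $(\beta_n)_{n\in\N}$ independent standard real-valued Wiener processes, one obtains
\[
\IZ(t)=\sum_{n\in\N}\sqrt{q_n}\int_0^t e^{(t-s)A_\gamma}\bigl(0,e_n^Q\bigr)\,d\beta_n(s).
\]
By the Itô isometry and the independence of the $\beta_n$, this gives
\[
\E\bigl[\|\IZ(t)\|_{\HH^\beta}^2\bigr]=\sum_{n\in\N}q_n\int_0^t \bigl\|e^{(t-s)A_\gamma}(0,e_n^Q)\bigr\|_{\HH^\beta}^2\,ds.
\]

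Next I would apply Lemma~\ref{lem:semigroup} with $\alpha=\beta$ to each summand, which yields
\[
\bigl\|e^{(t-s)A_\gamma}(0,e_n^Q)\bigr\|_{\HH^\beta}\le \Mg e^{-\rg(t-s)}\|(0,e_n^Q)\|_{\HH^\beta}=\Mg e^{-\rg(t-s)}\|e_n^Q\|_{H^{\beta-1}},
\]
where I used the identity $\|(0,w)\|_{\HH^\beta}=\|w\|_{H^{\beta-1}}$ which follows from the definition of the $\HH^\beta$ norm. Summing and integrating, and using Assumption~\ref{ass:Q},
\[
\E\bigl[\|\IZ(t)\|_{\HH^\beta}^2\bigr]\le \Mg^2\Bigl(\sum_{n\in\N}q_n\|e_n^Q\|_{H^{\beta-1}}^2\Bigr)\int_0^t e^{-2\rg(t-s)}\,ds\le \frac{\Mg^2 \|Q^{1/2}\|_{\mathcal{L}_2(H,H^{\beta-1})}^2}{2\rg},
\]
which gives the bound~\eqref{eq:momentboundsconvolution} for $p=2$, uniformly in $t\ge 0$.

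For general $p\in[1,\infty)$, I would invoke the fact that $\IZ(t)$ is a centered Gaussian random variable with values in the separable Hilbert space $\HH^\beta$ (as the $L^2$-limit of Gaussian stochastic integrals of deterministic integrands), so by Fernique's theorem (or the equivalence of all moments of the norm of a Hilbert-space-valued Gaussian vector) there exists $C_p\in(0,\infty)$ such that
\[
\E\bigl[\|\IZ(t)\|_{\HH^\beta}^p\bigr]\le C_p\,\E\bigl[\|\IZ(t)\|_{\HH^\beta}^2\bigr]^{p/2},
\]
uniformly in $t\ge 0$, which concludes the proof. There is no real obstacle here; the only point requiring a little care is the identification of the Hilbert--Schmidt norm $\|Q^{1/2}\|_{\mathcal{L}_2(H,H^{\beta-1})}$ with the sum $\sum_n q_n \|e_n^Q\|_{H^{\beta-1}}^2$, which was already spelled out in the setting just after~\eqref{eq:Qbeta}.
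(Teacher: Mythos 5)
Your proposal is correct and follows essentially the same route as the paper's proof: It\^o's isometry to reduce to $p=2$, the exponential decay estimate~\eqref{eq:expodecrease} from Lemma~\ref{lem:semigroup} applied to $(0,e_n^Q)$ with $\|(0,e_n^Q)\|_{\HH^\beta}=\|e_n^Q\|_{H^{\beta-1}}$, summation under Assumption~\ref{ass:Q}, and Gaussianity of $\IZ(t)$ to pass to general $p\in[1,\infty)$. You merely spell out the Gaussian moment-equivalence step slightly more explicitly than the paper does.
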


\begin{proof}
For any $t\ge 0$, $\IZ(t)$ is a Gaussian random variable, therefore it suffices to prove the moment bounds~\eqref{eq:momentboundsconvolution} for $p=2$. Using It\^o's isometry and the inequality~\eqref{eq:expodecrease} from Lemma~\ref{lem:semigroup}, one has
\begin{align*}
\E[\|\IZ(t)\|_{\HH^\beta}^2]&=\int_{0}^{t}\sum_{n\in\N}q_n\|e^{(t-s)A_\gamma}\bigl(0,e_n^Q\bigr)\|_{\HH^\beta}^2 ds\\
&\le (\Mg)^2\int_0^t e^{-2\rg(t-s)}ds \sum_{n\in\N}q_n\|e_n^Q\|_{H^{\beta-1}}^2\\
&\le \frac{(\Mg)^2}{2\rg}\|Q^{1/2}\|_{\mathcal{L}_2(H,H^{\beta-1})}^2<\infty,
\end{align*}
using the assumption that the condition~\eqref{eq:Qbeta} is satisfied in the last step. This concludes the proof.
\end{proof}

It is then straightforward to prove that for any $\mathcal{F}_0$-measurable $\HH$-valued initial value $x_0$, there exists a unique global mild solution~\eqref{eq:mild} of the stochastic evolution equation~\eqref{eq:see}, which can be constructed using a standard fixed point procedure (note that the nonlinearity $F$ from $\HH$ to $\HH$ is globally Lipschitz continuous, see~\eqref{eq:LipF_HH0}). The details of this standard proof are omitted. In addition, $X(t)$ takes values in $\HH^\beta$ for $\beta\in[0,1]$, if $x_0\in\HH^\beta$ and if the condition~\eqref{eq:Qbeta} is satisfied.

We now focus on proving two properties of the mild solutions of~\eqref{eq:see}. We first study moment bounds which are uniform in time, and second we show the existence and uniqueness of an invariant distribution denoted by $\mu_\infty$. These two properties require Assumption~\ref{ass:ergo} to be satisfied.

\begin{propo}\label{propo:momentbounds-exact}
Let Assumptions~\ref{ass:ergo} and~\ref{ass:Q} be satisfied. Let $p\in[1,\infty)$. Let $\beta\in[0,1]$, such that the condition~\eqref{eq:Qbeta} is satisfied. There exists $C_{\beta,p}\in(0,\infty)$ such that if the initial condition $X(0)=x_0$ is a $\mathcal{F}_0$-measurable random variable, which satisfies $\E[\|x_0\|_{\HH^\beta}^{p}]<\infty$, then one has
\begin{equation}\label{eq:momentbounds-exact}
\underset{t\ge 0}\sup~\E[\|X(t)\|_{\HH^\beta}^{p}]\le C_{\beta,p}\Bigl(1+\E[\|x_0\|_{\HH^\beta}^{p}]\Bigr).
\end{equation}
\end{propo}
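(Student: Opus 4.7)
The plan is to start from the mild formulation~\eqref{eq:mild}, bound $\E[\|X(t)\|_{\HH^\beta}^p]^{1/p}$ using the triangle inequality in $\HH^\beta$ combined with Minkowski's inequality in $L^p(\Omega)$ (brought inside the Bochner integral), and then close the estimate by a Gronwall-type absorption argument sharp enough to exploit Assumption~\ref{ass:ergo}. The mild formulation decomposes $X(t)$ into three pieces: $e^{tA_\gamma}x_0$, a nonlinear convolution $\int_0^t e^{(t-s)A_\gamma}F(X(s))\,ds$, and the stochastic convolution $\IZ(t)$. The first is bounded by $\Mg e^{-\rg t}\E[\|x_0\|_{\HH^\beta}^p]^{1/p}$ via Lemma~\ref{lem:semigroup}; the third is uniformly bounded in $t$ by Lemma~\ref{lem:convolution}.

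The crucial ingredient is the treatment of the nonlinear convolution. Applying~\eqref{eq:LipF_HH1} with $x_1=0$ and then using the embedding~\eqref{eq:comparenorms} first between $\HH^1$ and $\HH^\beta$ and then between $\HH^\beta$ and $\HH$, one obtains for every $x\in\HH^\beta$
\[
\|F(x)\|_{\HH^\beta}\le \lambda_1^{-(1-\beta)/2}\|F(x)\|_{\HH^1}\le \lambda_1^{-(1-\beta)/2}\bigl(\|F(0)\|_{\HH^1}+\Lf\|x\|_{\HH}\bigr)\le C(\beta,f)+\frac{\Lf}{\sqrt{\lambda_1}}\|x\|_{\HH^\beta}.
\]
Applying Lemma~\ref{lem:semigroup} to the kernel $e^{(t-s)A_\gamma}$ inside the Bochner integral and writing $\phi(t):=\E[\|X(t)\|_{\HH^\beta}^p]^{1/p}$, I arrive at a Volterra-type inequality of the form
\[
\phi(t)\le h(t)+\frac{\Mg\Lf}{\sqrt{\lambda_1}}\int_0^t e^{-\rg(t-s)}\phi(s)\,ds,
\]
where $h$ collects the contributions from the initial condition, the constant part of the nonlinearity, and the stochastic convolution, and is uniformly bounded on $[0,\infty)$.

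To close, I introduce $\psi(t):=\sup_{0\le s\le t}\phi(s)$, which is finite for each $t\ge 0$ by a standard moment bound on compact intervals. Since $\psi$ is nondecreasing, substituting $\phi(s)\le \psi(t)$ inside the integral and performing the elementary computation $\int_0^t e^{-\rg(t-s)}\,ds\le 1/\rg$ yields
\[
\psi(t)\le \sup_{s\ge 0}h(s)+\frac{\Mg\Lf}{\rg\sqrt{\lambda_1}}\,\psi(t).
\]
The factor in front of $\psi(t)$ on the right is strictly less than $1$ precisely because of Assumption~\ref{ass:ergo}; absorbing it on the left-hand side and using the definition~\eqref{eq:ergo2} of $\rho(\gamma,f)$ gives the desired uniform-in-time bound $\psi(t)\le \rg\,(\sup_s h(s))/\rho(\gamma,f)$, with the expected linear dependence on $\E[\|x_0\|_{\HH^\beta}^p]$. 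The main subtlety is not the Gronwall step itself but the derivation of the Lipschitz-type estimate for $F$ in $\HH^\beta$ with the precise constant $\Lf/\sqrt{\lambda_1}$: the semigroup decays at rate $\rg$ in $\HH^\beta$ by Lemma~\ref{lem:semigroup} while~\eqref{eq:LipF_HH1} naturally controls $F$ in $\HH^1$, and it is exactly the extra factor $\lambda_1^{-1/2}$ coming from the gap between $\HH$ and $\HH^\beta\subset\HH^1$ that makes the smallness condition~\eqref{eq:ergo} the natural one.
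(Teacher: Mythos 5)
Your proof is correct and closes under exactly the right smallness condition; the key computation, namely $\|F(x)\|_{\HH^\beta}\le C(\beta,f)+\Lf\lambda_1^{-1/2}\|x\|_{\HH^\beta}$ with the two embedding factors $\lambda_1^{-(1-\beta)/2}$ and $\lambda_1^{-\beta/2}$ combining into exactly $\lambda_1^{-1/2}$, is carried out correctly, and it is precisely what makes the absorption constant $\Mg\Lf/(\rg\sqrt{\lambda_1})$ strictly less than $1$ under Assumption~\ref{ass:ergo}. Your route differs from the paper's in two organizational respects. First, the paper proves the case $\beta=0$ separately by subtracting the stochastic convolution (working with $Y=X-\IZ$) and applying Gronwall's inequality to $t\mapsto e^{\rg t}\bigl(\E[\|Y(t)\|_{\HH}^p]\bigr)^{1/p}$; the case $\beta\in(0,1]$ is then obtained by a bootstrap in which the integrand of the nonlinear convolution is controlled only through the already-established $\HH$-norm bound, so no second absorption step and no sharp Lipschitz constant at the $\HH^\beta$ level are needed there. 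You instead run the whole argument in $\HH^\beta$ in a single pass, which is why you must track the constant $\Lf/\sqrt{\lambda_1}$ in $\HH^\beta$ rather than only in $\HH$ as in~\eqref{eq:LipF_HH0}; this buys a shorter proof at the price of a slightly more delicate preliminary estimate. Second, you replace the exponential-weight Gronwall step by a sup-absorption argument; this is equally valid, but it genuinely relies on the a priori finiteness of $\sup_{0\le s\le t}\bigl(\E[\|X(s)\|_{\HH^\beta}^p]\bigr)^{1/p}$ on compact intervals, which you correctly flag and which does hold here since $F$ is globally Lipschitz and $x_0\in L^p(\Omega;\HH^\beta)$ under condition~\eqref{eq:Qbeta}. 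Both approaches yield the same constants and the same linear dependence on $\E[\|x_0\|_{\HH^\beta}^p]$.
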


\begin{proof}
Introduce the auxiliary process $\bigl(Y(t)\bigr)_{t\ge 0}$ defined by
\[
Y(t)=X(t)-\IZ(t),
\]
where $\bigl(X(t)\bigr)_{t\ge 0}$ is the mild solution of~\eqref{eq:see} and $\bigl(\IZ(t)\bigr)_{t\ge 0}$ is the stochastic convolution given by~\eqref{eq:convolution}. In order to prove~\eqref{eq:momentbounds-exact} for $\beta=0$, owing to Lemma~\ref{lem:convolution}, it suffices to check that
\[
\underset{t\ge 0}\sup~\E[\|Y(t)\|_{\HH}^{p}]\le C_{\beta,p}\Bigl(1+\E[\|x_0\|_{\HH}^{p}]\Bigr).
\]
For all $t\ge 0$, one has
\[
Y(t)=e^{tA_{\gamma}}x_0+\int_0^te^{(t-s)A_{\gamma}}F(Y(s)+\IZ(s))\,ds.
\]
Using the inequality~\eqref{eq:expodecrease} from Lemma~\ref{lem:semigroup} and the Lipschitz continuity property~\eqref{eq:LipF_HH0} of $F$ from $\HH$ to $\HH$, one obtains, for all $t\ge 0$,
\begin{align*}
\|Y(t)\|_\HH\le \Mg e^{-\rg t}\|x_0\|_\HH+\int_0^t \frac{\Lf \Mg}{\sqrt{\lambda_1}}e^{-\rg(t-s)}\|Y(s)\|_\HH ds+\int_0^t \Mg e^{-\rg(t-s)}\|F(\IZ(s))\|_\HH ds.
\end{align*}
Using Minkowskii's inequality and the moment bounds~\eqref{eq:momentboundsconvolution} from Lemma~\ref{lem:convolution} for the stochastic convolution $\IZ(t)$, for any $p\in[1,\infty)$ there exists $C_p\in(0,\infty)$ such that one has for all $t\ge 0$
\begin{align*}
\bigl(\E[\|Y(t)\|_{\HH}^{p}]\bigr)^{\frac1p}&\le C_p\Bigl(1+\bigl(\E[\|x_0\|_{\HH}^{p}]\bigr)^{\frac1p}\Bigr)+\int_0^t \frac{\Lf \Mg}{\sqrt{\lambda_1}}e^{-\rg(t-s)}\bigl(\E[\|Y(s)\|_\HH^p]\bigr)^{\frac1p} ds.
\end{align*}
The condition~\eqref{eq:ergo} ensures that one has $\frac{\Lf \Mg}{\sqrt{\lambda_1}}<\rg$. As a consequence, applying the Gronwall inequality to the mapping $t\mapsto e^{\rg t}\bigl(\E[\|Y(t)\|_{\HH}^{p}]\bigr)^{\frac1p}$ one obtains
\[
\underset{t\ge 0}\sup~\bigl(\E[\|Y(t)\|_{\HH}^{p}]\bigr)^{\frac1p}\le C_p\Bigl(1+\bigl(\E[\|x_0\|_{\HH}^{p}]\bigr)^{\frac1p}\Bigr),
\]
which proves the inequality~\eqref{eq:momentbounds-exact} for $\beta=0$.

Let now $\beta\in(0,1]$ and assume that the condition~\eqref{eq:Qbeta} holds, and that the initial value $x_0$ satisfies $\E[\|x_0\|_{\HH^\beta}^{p}]<\infty$. Since $F$ is globally Lipschitz continuous from $\HH$ to $\HH^1$, see~\eqref{eq:LipF_HH1}, it is also globally Lipschitz continuous from $\HH$ to $\HH^\beta$ owing to the inequality~\eqref{eq:comparenorms}. Therefore there exists $C_\beta\in(0,\infty)$ such that, using the mild formulation~\eqref{eq:mild}, for all $t\ge 0$ one has
\[
\|X(t)\|_{\HH^\beta}\le \|x_0\|_{\HH^\beta}+C_\beta\int_0^t e^{-\rg(t-s)}\bigl(1+\|X(s)\|_{\HH}\bigr)ds+\|\IZ(t)\|_{\HH^\beta}.
\]
Using Minkowskii's inequality, the moment bounds for $\beta=0$ established above and the moment bounds~\eqref{eq:momentboundsconvolution} from Lemma~\ref{lem:convolution} for the stochastic convolution $\IZ(t)$, one obtains 
\[
\underset{t\ge 0}\sup~\bigl(\E[\|X(t)\|_{\HH^\beta}^{p}]\bigr)^{\frac1p}\le C_{\beta,p}\Bigl(1+\bigl(\E[\|x_0\|_{\HH^\beta}^{p}]\bigr)^{\frac1p}\Bigr).
\]
The proof of the moment bounds~\eqref{eq:momentbounds-exact} is thus completed.
\end{proof}

\begin{propo}\label{propo:ergo-exact}
Let Assumptions~\ref{ass:ergo} and~\ref{ass:Q} be satisfied. The $\HH$-valued Markov process $\bigl(X(t)\bigr)_{t\ge 0}$ admits a unique invariant probability distribution $\mu_\infty$, which satisfies the following properties.
\begin{itemize}
\item If $x_0$ has distribution $\mu_\infty$, then for all $t\ge 0$ $X(t)$ has distribution $\mu_\infty$.
\item For all $\beta\in[0,1]$ such that the condition~\eqref{eq:Qbeta} is satisfied and all $p\in[1,\infty)$, one has
\begin{equation}\label{eq:moment-mu-exact}
\int \|x\|_{\HH^\beta}^{2p}d\mu_\infty(x)<\infty.
\end{equation}
\item There exists $C\in(0,\infty)$, such that if $\varphi:\HH\to\R$ is a Lipschitz continuous mapping, for all $t\ge 0$ one has
\begin{equation}\label{eq:ergo-exact}
\big|\E[\varphi(X(t))]-\int\varphi d\mu_\infty\big|\le C{\rm Lip}(\varphi)e^{-\rho(\gamma,f)t}\bigl(1+\E[\|X(0)\|_{\HH}]\bigr),
\end{equation}
where $\rho(\gamma,f)>0$ is defined by~\eqref{eq:ergo2} and ${\rm Lip}(\varphi)=\underset{x_1,x_2\in \HH, x_1\neq x_2}\sup~\frac{|\varphi(x_2)-\varphi(x_1)|}{\|x_2-x_1\|_H}$.
\end{itemize}
\end{propo}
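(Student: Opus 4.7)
The plan is to apply a synchronous coupling argument exploiting the dissipation captured by Assumption~\ref{ass:ergo}. First I would derive a pathwise $\HH$-contraction between two mild solutions $X^{(1)}, X^{(2)}$ of~\eqref{eq:see} driven by the \emph{same} Wiener process $\IW^Q$ but starting from initial values $x_0^{(1)}, x_0^{(2)}$. Subtracting the mild formulations~\eqref{eq:mild} cancels the stochastic integrals, and combining Lemma~\ref{lem:semigroup} with the Lipschitz estimate~\eqref{eq:LipF_HH0} gives, after multiplication by $e^{\rg t}$,
\[
e^{\rg t}\|X^{(1)}(t) - X^{(2)}(t)\|_\HH \le \Mg \|x_0^{(1)} - x_0^{(2)}\|_\HH + \frac{\Mg \Lf}{\sqrt{\lambda_1}} \int_0^t e^{\rg s} \|X^{(1)}(s) - X^{(2)}(s)\|_\HH\, ds.
\]
Gronwall's lemma and the definition~\eqref{eq:ergo2} yield the key contraction
\[
\|X^{(1)}(t) - X^{(2)}(t)\|_\HH \le \Mg\, \|x_0^{(1)} - x_0^{(2)}\|_\HH\, e^{-\rho(\gamma,f) t}.
\]

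This pathwise contraction makes the Markov semigroup $\bigl(P_t\bigr)_{t \ge 0}$ a strict $W_1$-contraction on the complete metric space $\bigl(\mathcal{P}_1(\HH), W_1\bigr)$ of probability measures with finite first moment. For existence, I would take $X(t)$ with $x_0 = 0$: the coupling bound combined with the Markov property gives
\[
W_1\bigl(\mathrm{law}(X(t+s)), \mathrm{law}(X(t))\bigr) \le \Mg\, e^{-\rho(\gamma,f) t}\, \E[\|X(s)\|_\HH]
\]
for all $s, t \ge 0$, and $\sup_{s \ge 0} \E[\|X(s)\|_\HH] < \infty$ by Proposition~\ref{propo:momentbounds-exact} applied with $\beta = 0$. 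Hence $\bigl(\mathrm{law}(X(t))\bigr)_{t \ge 0}$ is Cauchy in $W_1$, with limit some $\mu_\infty \in \mathcal{P}_1(\HH)$; passing to the limit in $\mathrm{law}(X(t+r)) = \mathrm{law}(X(t))\,P_r$ yields $\mu_\infty P_r = \mu_\infty$ for all $r \ge 0$. Uniqueness is immediate from the contraction: two invariant distributions $\mu,\nu$ satisfy $W_1(\mu, \nu) \le \Mg\, e^{-\rho(\gamma,f) t}\, W_1(\mu, \nu) \to 0$.

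For the moment bounds~\eqref{eq:moment-mu-exact}, I would use that Assumption~\ref{ass:Q} supplies some $\beta_0 \in (0,1]$ satisfying~\eqref{eq:Qbeta}, so that Proposition~\ref{propo:momentbounds-exact} applied with $x_0 = 0$ yields $\sup_t \E[\|X(t)\|_{\HH^{\beta_0}}^{2p}] < \infty$; compact embedding $\HH^{\beta_0} \hookrightarrow \HH$ then gives tightness of $\bigl(\mathrm{law}(X(t))\bigr)_t$ in $\HH$. Any weak subsequential limit must coincide with $\mu_\infty$ by uniqueness, and Fatou's lemma (using lower semi-continuity of $\|\cdot\|_{\HH^\beta}$ with respect to weak convergence in $\HH$) gives $\int \|x\|_{\HH^\beta}^{2p}\, d\mu_\infty(x) \le \liminf_n \E[\|X(t_n)\|_{\HH^\beta}^{2p}] < \infty$ for every admissible $\beta$. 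Finally, the exponential ergodicity~\eqref{eq:ergo-exact} follows by coupling $X$ with $\tilde X$ starting from $\tilde X(0) \sim \mu_\infty$ (chosen independent of $X(0)$) and driven by the same noise; invariance gives $\tilde X(t) \sim \mu_\infty$, so
\[
\bigl|\E[\varphi(X(t))] - \textstyle\int \varphi\, d\mu_\infty \bigr| \le {\rm Lip}(\varphi)\, \E[\|X(t) - \tilde X(t)\|_\HH] \le \Mg\, {\rm Lip}(\varphi)\, e^{-\rho(\gamma,f) t}\bigl(\E[\|X(0)\|_\HH] + \textstyle\int\|x\|_\HH\, d\mu_\infty\bigr),
\]
and the last integral is finite by the moment bound just proved.

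The only real delicate point is ensuring that the Lipschitz factor $\Mg \Lf / \sqrt{\lambda_1}$ picked up from $F : \HH \to \HH$ is absorbed into the semigroup decay rate $\rg$ after Gronwall: this is precisely the content of Assumption~\ref{ass:ergo} and produces the sharp rate $\rho(\gamma,f) > 0$ driving the entire argument.
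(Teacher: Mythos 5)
Your proposal is correct and follows essentially the same route as the paper: the core step is the identical synchronous-coupling contraction $\|X^{(1)}(t)-X^{(2)}(t)\|_{\HH}\le \Mg e^{-\rho(\gamma,f)t}\|x_0^{(1)}-x_0^{(2)}\|_{\HH}$ obtained from the mild formulation, Lemma~\ref{lem:semigroup}, \eqref{eq:LipF_HH0} and Gronwall, after which the paper simply declares the remaining items ``standard'' while you spell them out (Cauchy in $W_1$, invariance and uniqueness, moments via lower semicontinuity of $\|\cdot\|_{\HH^\beta}$, and coupling with a stationary copy for \eqref{eq:ergo-exact}). Your inclusion of the prefactor $\Mg$ in the contraction is in fact what the Gronwall argument genuinely yields, and it is harmlessly absorbed into the constant $C$ of \eqref{eq:ergo-exact}.
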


\begin{proof}
Let $x_{0,1}\in\HH$ and $x_{0,2}\in\HH$ be two arbitrary initial values, and denote by $\bigl(X_1(t)\bigr)_{t\ge 0}$ and $\bigl(X_2(t)\bigr)_{t\ge 0}$ the mild solutions given by~\eqref{eq:mild} of the stochastic evolution equation~\eqref{eq:see}, driven by the same Wiener process $\bigl(\IW^Q(t)\bigr)_{t\ge 0}$, and with initial values $X_1(0)=x_{0,1}$ and $X_2(0)=x_{0,2}$ respectively.

We claim that for all $t\ge 0$ one has
\begin{equation}\label{eq:claim_ergo}
\|X_2(t)-X_1(t)\|_{\HH}\le e^{-\rho(\gamma,f)t}\|x_{0,2}-x_{0,1}\|_{\HH}
\end{equation}
where the positive real number $\rho(\gamma,f)>0$ is defined by~\eqref{eq:ergo2}. All the items of Proposition~\ref{propo:ergo-exact} can then be obtained using standard arguments, the details are omitted.

The proof of the claim~\eqref{eq:claim_ergo} is straightforward. Let $r(t)=\|X_2(t)-X_1(t)\|_{\HH}$ for all $t\ge 0$. Using the mild formulation~\eqref{eq:mild}, the inequality~\eqref{eq:expodecrease} from Lemma~\ref{lem:semigroup} and the Lipschitz continuity property~\eqref{eq:LipF_HH0} of $F$ from $\HH$ to $\HH$, for all $t\ge 0$ one obtains
\begin{align*}
r(t)&\le \Mg e^{-\rg t}\|x_{0,2}-x_{0,1}\|_{\HH}+\int_{0}^{t}\Mg e^{-\rg (t-s)}\|F(X_2(s))-F(X_1(s))\|_{\HH}ds\\
&\le \Mg e^{-\rg t}r(0)+\int_{0}^{t}\frac{\Mg\Lf}{\sqrt{\lambda_1}} e^{-\rg(t-s)}\|r(s)\|_{\HH}ds.
\end{align*}
Applying the Gronwall inequality to the mapping $t\mapsto e^{\rg t}\|r(t)\|_\HH$ and using the condition~\eqref{eq:ergo} from Assumption~\ref{ass:ergo} then yields the claim~\eqref{eq:claim_ergo}.

This concludes the proof of Proposition~\ref{propo:ergo-exact}.
\end{proof}

\section{Numerical methods and convergence results}\label{sec:numerics}

\subsection{Spatial discretization: spectral Galerkin method}\label{sec:galerkin}

For any integer $N\in\N$, introduce the finite dimensional spaces
\begin{equation}
H_N={\rm span}\left\{e_1,\ldots,e_N\right\}~,\quad \HH_N={\rm span}\left\{(e_1,0),(0,e_1),\ldots,(e_N,0),(0,e_N)\right\}.
\end{equation}
Observe that one has $\|(0,e_n)\|_\HH=\|e_n\|_{H^{-1}}=\lambda_n^{-\frac12}$, therefore $(e_1,0),(0,\sqrt{\lambda_1}e_1),\ldots,(e_N,0),(0,\sqrt{\lambda_N}e_N)$ is an orthonormal system of the finite dimensional space $\HH_N$.

In addition, introduce the associated orthogonal projection operators denoted by $P_N$ and $\IP_N$: for all $u=\sum_{n\in\N}\langle u,e_n\rangle_H e_n\in \HH^{-1}$, one has
\begin{equation}
P_Nu=\sum_{n=1}^{N}\langle u,e_n\rangle_H e_n
\end{equation}
and for all $x=(u,v)\in\HH$, one has
\begin{equation}
\IP_Nx=\bigl(P_Nu,P_Nv\bigr)=\Bigl(\sum_{n=1}^{N}\langle u,e_n\rangle_H e_n,\sum_{n=1}^{N}\langle v,e_n\rangle_H e_n\Bigr)=\sum_{n=1}^{N}\langle x,(e_n,0)\rangle_{\HH}(e_n,0)+\sum_{n=1}^{N}\lambda_n\langle x,(0,e_n)\rangle_{\HH}(0,e_n).
\end{equation}

One has the following result: for all $\alpha\in[0,\infty)$, for all $x\in\HH^\alpha$ and $N\in\N$, one has
\begin{equation}\label{eq:errorPN}
\|\IP_Nx-x\|_{\HH}\le \lambda_N^{-\frac{\alpha}{2}}\|x\|_{\HH^\alpha}.
\end{equation}

Note that the unbounded linear operator $A_\gamma$ semigroup $\bigl(e^{tA_\gamma}\bigr)_{t\ge 0}$ given by~\eqref{eq:semigroup} commute with the projection operators $\IP_N$, for all $N\in\N$. In addition, the range of $\IP_N$ is a finite dimensional subspace of $\HH^\alpha$ for all $N\in\N$. Finally, it is worth mentioning that for all $\alpha\in\R$, one has
\[
\underset{N\in\N}\sup~\underset{x\in\HH^\alpha\setminus\{0\}}\sup~\frac{\|\IP_N x\|_{\HH^\alpha}}{\|x\|_{\HH^\alpha}}=1.
\]

For all $N\in\N$, the mapping $F_N:\HH_N\to\HH_N$ is defined by the expression $F_N(x)=\IP_NF(x)$ for all $x\in\HH_N$. Define also the mapping $f_N:H_N\to H_N$ by $f_N(u)=P_Nf(u)$ for all $u\in H_N$.

Using the spectral Galerkin approximation consists in approximating the mild solution $\bigl(X(t)\bigr)_{t\ge 0}$ of~\eqref{eq:see} by the $\HH_N$-valued process $\bigl(X^N(t)\bigr)_{t\ge 0}$ which is the solution of
\begin{equation}\label{eq:mildgalerkin}
X^N(t)=e^{tA_{\gamma}}\IP_Nx_0+\int_0^te^{(t-s)A_{\gamma}}F_N(X^N(s))\,ds+\int_0^te^{(t-s)A_{\gamma}}\IP_N\,d\IW^Q(s).
\end{equation}
Note that $\int_0^te^{(t-s)A_{\gamma}}\IP_N\,d\IW^Q(s)=\IP_N \IZ(t)$ where the stochastic convolution $\IZ(t)$ is defined by~\eqref{eq:convolution}.

Observe that~\eqref{eq:mildgalerkin} is the mild formulation associated with the stochastic evolution equation
\begin{equation}\label{eq:seegalerkin}
\left\lbrace
     \begin{aligned}
&dX^N(t)=A_\gamma X^N(t)dt+F_N(X^N(t))dt+\IP_Nd\IW^Q(t),\\
&X^N(0)=x_0^N=\IP_Nx_0. 
     \end{aligned}
\right.
\end{equation}
Define $u^N(t)=\Pi_uX^N(t)$ and $v^N(t)=\Pi_vX^N(t)$ for all $t\ge 0$. Then the stochastic evolution equation~\eqref{eq:seegalerkin} can be ,\begin{equation}\label{eq:spdegalerkin}
\left\lbrace
     \begin{aligned}
&du^N(t)=v^N(t)dt,\\
&dv^N(t)=(-\IL u^N(t)-\gamma v^N(t))\,dt+f_N(u^N(t))\,dt+P_N\,dW^Q(t),\\
&u^N(0)=P_Nu_0,\  \  v^N(0)=P_Nv_0.
     \end{aligned}
\right.
\end{equation}

It is straightforward to check that for all $N\in\N$, and for any initial value $x_0\in\HH$, there exists a unique global mild solution $\bigl(X^N(t)\bigr)_{t\ge 0}$ of~\eqref{eq:seegalerkin}. One has the following versions of Proposition~\ref{propo:ergo-exact} and~\ref{propo:momentbounds-exact}, where it is worth mentioning that the constants do not depend on the spatial discretization parameter $N$. The proofs are omitted since they use the same arguments as above for the exact solution.

\begin{propo}\label{propo:momentbounds-galerkin}
Let Assumptions~\ref{ass:ergo} and~\ref{ass:Q} be satisfied. Let $p\in[1,\infty)$. Let $\beta\in[0,1]$, such that the condition~\eqref{eq:Qbeta} is satisfied. There exists $C_{\beta,p}\in(0,\infty)$ such that if the initial condition $x_0$ is a $\mathcal{F}_0$-measurable random variable, which satisfies $\E[\|x_0\|_{\HH^\beta}^{p}]<\infty$, then one has
\begin{equation}\label{eq:momentbounds-galerkin}
\underset{N\in\N}\sup~\underset{t\ge 0}\sup~\E[\|X^N(t)\|_{\HH^\beta}^{p}]\le C_{\beta,p}\Bigl(1+\E[\|x_0\|_{\HH^\beta}^{p}]\Bigr).
\end{equation}
\end{propo}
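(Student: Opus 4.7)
The plan is to mirror the proof of Proposition~\ref{propo:momentbounds-exact}, with careful bookkeeping to ensure that every constant is independent of the spatial discretization parameter $N$. The key point is that the projection $\IP_N$ is a contraction on every space $\HH^\alpha$, so the Galerkin nonlinearity $F_N=\IP_N F$ inherits the Lipschitz constants of $F$ uniformly in $N$, and the semigroup bound~\eqref{eq:expodecrease} applies identically after projection since $\IP_N$ commutes with $e^{tA_\gamma}$.

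First I would introduce the auxiliary process $Y^N(t)=X^N(t)-\IP_N\IZ(t)$, which by~\eqref{eq:mildgalerkin} and the commutation of $\IP_N$ with the semigroup satisfies
\[
Y^N(t)=e^{tA_\gamma}\IP_N x_0+\int_0^t e^{(t-s)A_\gamma}F_N(Y^N(s)+\IP_N\IZ(s))\,ds.
\]
For the case $\beta=0$, I would apply Lemma~\ref{lem:semigroup}, the contraction property $\|\IP_N x\|_\HH\le\|x\|_\HH$, the Lipschitz continuity~\eqref{eq:LipF_HH0} of $F$, and the moment bound~\eqref{eq:momentboundsconvolution} on $\IZ(t)$ from Lemma~\ref{lem:convolution} (so that $\sup_{t\ge 0}\E[\|\IP_N\IZ(t)\|_\HH^p]$ is bounded uniformly in $N$). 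Taking $L^p(\Omega)$ norms and using Minkowski's inequality yields
\[
\bigl(\E[\|Y^N(t)\|_\HH^p]\bigr)^{1/p}\le C_p\Bigl(1+\bigl(\E[\|x_0\|_\HH^p]\bigr)^{1/p}\Bigr)+\int_0^t\frac{\Lf\Mg}{\sqrt{\lambda_1}}e^{-\rg(t-s)}\bigl(\E[\|Y^N(s)\|_\HH^p]\bigr)^{1/p}ds.
\]
Assumption~\ref{ass:ergo} gives $\Lf\Mg/\sqrt{\lambda_1}<\rg$, so the Gronwall inequality applied to $t\mapsto e^{\rg t}(\E[\|Y^N(t)\|_\HH^p])^{1/p}$ delivers a bound uniform in both $t\ge 0$ and $N\in\N$, which combined with the uniform bound on $\IP_N\IZ(t)$ yields~\eqref{eq:momentbounds-galerkin} for $\beta=0$.

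For the case $\beta\in(0,1]$, I would go back to the mild formulation~\eqref{eq:mildgalerkin}, use~\eqref{eq:expodecrease} in the $\HH^\beta$ norm, the fact that $\|\IP_N x_0\|_{\HH^\beta}\le\|x_0\|_{\HH^\beta}$, the Lipschitz continuity~\eqref{eq:LipF_HH1} of $F$ from $\HH$ to $\HH^1$ together with the embedding~\eqref{eq:comparenorms} to control $\|F_N(X^N(s))\|_{\HH^\beta}$ by $C_\beta(1+\|X^N(s)\|_\HH)$, and the uniform-in-$N$ moment bound~\eqref{eq:momentboundsconvolution} on $\IP_N\IZ(t)$ in $\HH^\beta$. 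Plugging in the uniform $\HH$-bound obtained in the previous step (rather than iterating a Gronwall argument in $\HH^\beta$) immediately gives
\[
\sup_{N\in\N}\sup_{t\ge 0}\bigl(\E[\|X^N(t)\|_{\HH^\beta}^p]\bigr)^{1/p}\le C_{\beta,p}\Bigl(1+\bigl(\E[\|x_0\|_{\HH^\beta}^p]\bigr)^{1/p}\Bigr),
\]
as desired.

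The only nontrivial point, and the aspect that really distinguishes this from the exact-solution proof, is verifying that each estimate is genuinely uniform in $N$; this is where the contraction properties of $\IP_N$ on $\HH^\alpha$ together with its commutation with $e^{tA_\gamma}$ and the structural identity $F_N=\IP_N F$ do all the work. Once those are invoked, the Gronwall argument under Assumption~\ref{ass:ergo} is literally the same as in Proposition~\ref{propo:momentbounds-exact}.
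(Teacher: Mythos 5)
Your proof is correct and follows exactly the route the paper intends: the paper omits the proof of this proposition, stating that it uses the same arguments as Proposition~\ref{propo:momentbounds-exact}, and your adaptation (auxiliary process $Y^N=X^N-\IP_N\IZ$, contractivity of $\IP_N$ on $\HH^\alpha$, commutation with $e^{tA_\gamma}$, the uniform Lipschitz bound for $F_N=\IP_NF$, and the Gronwall argument under Assumption~\ref{ass:ergo}) is precisely the required bookkeeping to make every constant independent of $N$.
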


\begin{propo}\label{propo:ergo-galerkin}
Let Assumptions~\ref{ass:ergo} and~\ref{ass:Q} be satisfied. For any $N\in\N$, the $\HH_N$-valued Markov process $\bigl(X^N(t)\bigr)_{t\ge 0}$ admits a unique invariant probability distribution $\mu_\infty^N$, which satisfies the following properties.
\begin{itemize}
\item If $x_0^N$ has distribution $\mu_\infty^N$, then for all $t\ge 0$ $X^N(t)$ has distribution $\mu_\infty^N$.
\item For all $\beta\in[0,1]$ such that the condition~\eqref{eq:Qbeta} is satisfied and all $p\in[1,\infty)$, one has
\begin{equation}\label{eq:moment-mu-galerkin}
\underset{N\in\N}\sup~\int \|x\|_{\HH^\beta}^{2p}d\mu_\infty^N(x)<\infty.
\end{equation}
\item There exists $C\in(0,\infty)$, such that if $\varphi:\HH\to\R$ is a Lipschitz continuous mapping, for all $N\in\N$ and all $t\ge 0$ one has
\begin{equation}\label{eq:ergo-galerkin}
\big|\E[\varphi(X^N(t))]-\int\varphi d\mu_\infty^N\big|\le C{\rm Lip}(\varphi)e^{-\rho(\gamma,f)t}\bigl(1+\E[\|X^N(0)\|_{\HH}]\bigr),
\end{equation}
where $\rho(\gamma,f)>0$ is defined by~\eqref{eq:ergo2} and ${\rm Lip}(\varphi)=\underset{x_1,x_2\in \HH, x_1\neq x_2}\sup~\frac{|\varphi(x_2)-\varphi(x_1)|}{\|x_2-x_1\|_H}$.
\end{itemize}
\end{propo}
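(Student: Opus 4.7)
The plan is to mimic the proof of Proposition~\ref{propo:ergo-exact} step by step, exploiting two elementary but crucial structural properties of the Galerkin scheme. First, since $A_\gamma$ and $\IP_N$ commute, the semigroup $\bigl(e^{tA_\gamma}\bigr)_{t\ge 0}$ leaves the finite dimensional subspace $\HH_N$ invariant, and its restriction to $\HH_N$ still satisfies the bound~\eqref{eq:expodecrease} with the same constants $\Mg$ and $\rg$, which in particular do not depend on $N$. Second, because $\IP_N$ is an orthogonal projection on $\HH$, the truncated nonlinearity $F_N=\IP_N F$ satisfies, for all $x_1,x_2\in\HH_N$,
\[
\|F_N(x_2)-F_N(x_1)\|_{\HH}\le \|F(x_2)-F(x_1)\|_{\HH}\le \frac{\Lf}{\sqrt{\lambda_1}}\|x_2-x_1\|_{\HH},
\]
again with an $N$-independent constant.

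The core step is the contraction estimate analogous to~\eqref{eq:claim_ergo}. I would let $x_{0,1}^N,x_{0,2}^N\in\HH_N$ and denote by $\bigl(X_1^N(t)\bigr)_{t\ge 0}$, $\bigl(X_2^N(t)\bigr)_{t\ge 0}$ the corresponding mild solutions of~\eqref{eq:seegalerkin} driven by the same Wiener process. Setting $r^N(t)=\|X_2^N(t)-X_1^N(t)\|_\HH$, the stochastic integral terms cancel in the difference, and the mild formulation~\eqref{eq:mildgalerkin} combined with the two observations above yields
\[
r^N(t)\le \Mg e^{-\rg t}r^N(0)+\int_0^t \frac{\Mg \Lf}{\sqrt{\lambda_1}}e^{-\rg(t-s)}r^N(s)\,ds,
\]
exactly as in the proof of Proposition~\ref{propo:ergo-exact}. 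Applying Gronwall's lemma to $t\mapsto e^{\rg t}r^N(t)$ and using Assumption~\ref{ass:ergo} produces the $N$-uniform contraction
\[
\|X_2^N(t)-X_1^N(t)\|_{\HH}\le \Mg e^{-\rho(\gamma,f)t}\|x_{0,2}^N-x_{0,1}^N\|_{\HH}.
\]

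From this pathwise contraction I would derive all three claims of Proposition~\ref{propo:ergo-galerkin}. For existence of $\mu_\infty^N$, I would apply the Krylov--Bogoliubov theorem: tightness of the time-averaged laws of $\bigl(X^N(t)\bigr)_{t\ge 0}$ on $\HH_N$ follows from the moment bounds~\eqref{eq:momentbounds-galerkin} (even in the plain $\HH$ norm, since $\HH_N$ is finite dimensional, closed balls are compact). Uniqueness follows directly from the contraction estimate above applied to two initial distributions coupled through the same Wiener process. The exponential decay~\eqref{eq:ergo-galerkin} is then obtained in the standard way: if $\tilde X^N(0)$ has distribution $\mu_\infty^N$, is independent of $X^N(0)$, and the two solutions are driven by the same Wiener process, then
\[
\big|\E[\varphi(X^N(t))]-\int\varphi d\mu_\infty^N\big|\le {\rm Lip}(\varphi)\,\E\bigl[\|X^N(t)-\tilde X^N(t)\|_\HH\bigr]\le \Mg\,{\rm Lip}(\varphi) e^{-\rho(\gamma,f)t}\E\bigl[\|X^N(0)-\tilde X^N(0)\|_\HH\bigr],
\]
and the right-hand side is controlled by $C{\rm Lip}(\varphi)e^{-\rho(\gamma,f)t}\bigl(1+\E[\|X^N(0)\|_{\HH}]\bigr)$ using the first item of the proposition (so that $\E[\|\tilde X^N(0)\|_\HH]=\int\|x\|_\HH d\mu_\infty^N(x)$ is finite and bounded in $N$). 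The uniform-in-$N$ moment estimate~\eqref{eq:moment-mu-galerkin} is inherited from~\eqref{eq:momentbounds-galerkin}: integrating with respect to $\mu_\infty^N$ (which is invariant) and using Fatou's lemma to pass to the supremum in $t$ gives the bound.

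There is no genuine obstacle here; the entire argument is a transcription of the proofs of Propositions~\ref{propo:momentbounds-exact} and~\ref{propo:ergo-exact}. The only thing worth monitoring carefully is that every constant produced along the way is $N$-independent, which is automatic given the commutation of $A_\gamma$ with $\IP_N$, the operator-norm bound $\|\IP_N\|_{\mathcal{L}(\HH)}=1$, and the uniform moment bounds already stated in Proposition~\ref{propo:momentbounds-galerkin}. This uniformity is precisely what makes the proposition useful for the subsequent analysis of the invariant distribution error $\mu_\infty-\mu_\infty^N$.
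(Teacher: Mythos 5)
Your proposal is correct and follows exactly the route the paper intends: the paper omits the proof of Proposition~\ref{propo:ergo-galerkin} precisely because it is the same argument as for Propositions~\ref{propo:momentbounds-exact} and~\ref{propo:ergo-exact}, namely the contraction estimate~\eqref{eq:claim_ergo} obtained from the mild formulation, Lemma~\ref{lem:semigroup} and Gronwall, followed by standard Krylov--Bogoliubov and coupling arguments. Your additional care in tracking the $N$-independence of the constants (commutation of $A_\gamma$ with $\IP_N$ and $\|\IP_N\|_{\mathcal{L}(\HH)}=1$) is exactly the point that justifies the uniformity claimed in~\eqref{eq:moment-mu-galerkin} and~\eqref{eq:ergo-galerkin}.
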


Using the inequality~\eqref{eq:errorPN} and standard arguments, one has the following convergence result.
\begin{lemma}\label{lem:strongerror_galerkin}
Let $T\in(0,\infty)$, $\beta\in[0,1]$ such that the condition~\eqref{eq:Qbeta} from Assumption~\ref{ass:Q} holds, and let $x_0$ be a $\mathcal{F}_0$-measurable random variable such that $\E[\|x_0\|_{\HH^\beta}]<\infty$. Then there exists $C_\beta(T)\in(0,\infty)$ such that
\[
\E[\|X(T)-X^N(T)\|_\HH]\le C_\beta(T)\lambda_N^{-\frac{\beta}{2}}\bigl(1+\E[\|x_0\|_{\HH^\beta}]\bigr).
\]
\end{lemma}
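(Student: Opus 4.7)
The plan is to subtract the two mild formulations and control each resulting term separately, using the regularity and ergodic estimates built up earlier in the paper, then closing with a Gronwall argument.

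\medskip

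Writing the mild formulation~\eqref{eq:mild} for $X(T)$ and~\eqref{eq:mildgalerkin} for $X^N(T)$ and subtracting, I would decompose
\[
X(T)-X^N(T)=e^{TA_\gamma}(I-\IP_N)x_0+\int_0^T e^{(T-s)A_\gamma}(I-\IP_N)F(X(s))\,ds+\int_0^T e^{(T-s)A_\gamma}\IP_N\bigl(F(X(s))-F(X^N(s))\bigr)ds+(I-\IP_N)\IZ(T).
\]
The first two steps just collect the ingredients. For the initial-value term, I use the semigroup bound~\eqref{eq:boundsemigroup} and the projection error~\eqref{eq:errorPN} to get a bound by $\lambda_N^{-\beta/2}\|x_0\|_{\HH^\beta}$. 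For the stochastic convolution term, I note that $(I-\IP_N)\IZ(T)\in\HH$ with $\|(I-\IP_N)\IZ(T)\|_\HH\le\lambda_N^{-\beta/2}\|\IZ(T)\|_{\HH^\beta}$, and apply Lemma~\ref{lem:convolution} to bound the expectation by $C\lambda_N^{-\beta/2}$.

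\medskip

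For the first integral term, I would use~\eqref{eq:Falphabis} to get $\|F(X(s))\|_{\HH^\beta}\le C(1+\|X(s)\|_\HH)$, then~\eqref{eq:errorPN} followed by the semigroup bound~\eqref{eq:expodecrease} to obtain
\[
\Bigl\|\int_0^T e^{(T-s)A_\gamma}(I-\IP_N)F(X(s))\,ds\Bigr\|_\HH\le \lambda_N^{-\beta/2}\int_0^T \Mg e^{-\rg(T-s)}C(1+\|X(s)\|_\HH)\,ds.
\]
Taking expectation and invoking the uniform-in-time moment bound from Proposition~\ref{propo:momentbounds-exact} yields a contribution bounded by $C_\beta(T)\lambda_N^{-\beta/2}(1+\E[\|x_0\|_{\HH^\beta}])$. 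Since the integral against $e^{-\rg(T-s)}$ is uniformly bounded in $T$, the constant here is actually $T$-independent, which is fine.

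\medskip

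It remains to absorb the Lipschitz remainder. I would set $r(t)=\E[\|X(t)-X^N(t)\|_\HH]$, use~\eqref{eq:LipF_HH0} to get $\|F(X(s))-F(X^N(s))\|_\HH\le \frac{\Lf}{\sqrt{\lambda_1}}\|X(s)-X^N(s)\|_\HH$, and the semigroup bound~\eqref{eq:expodecrease} plus the fact that $\IP_N$ is a contraction on $\HH$. Combining with the three previously bounded terms gives
\[
r(t)\le C_\beta(T)\lambda_N^{-\beta/2}(1+\E[\|x_0\|_{\HH^\beta}])+\int_0^t \frac{\Mg\Lf}{\sqrt{\lambda_1}}e^{-\rg(t-s)}r(s)\,ds,\qquad t\in[0,T],
\]
and a standard Gronwall inequality applied to $t\mapsto e^{\rg t}r(t)$ yields the claimed estimate at $t=T$.

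\medskip

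There is no real obstacle here: all the hard work (projection error bound~\eqref{eq:errorPN}, regularity estimates~\eqref{eq:Falphabis} on $F$, uniform moment bound~\eqref{eq:momentbounds-exact}, exponential decay~\eqref{eq:expodecrease} of the semigroup, and the Lipschitz bound~\eqref{eq:LipF_HH0}) has been done earlier. The mildly delicate point is making sure that the regularity parameter $\beta$ can be used both to bound $(I-\IP_N)x_0$ and $(I-\IP_N)F(X(s))$: the first requires $x_0\in\HH^\beta$ (which is hypothesized), the second requires $F(X(s))\in\HH^\beta$, and~\eqref{eq:Falphabis} precisely grants this for any $\beta\in[0,1]$ with only a $\|X(s)\|_\HH$ bound, so no extra $\HH^\beta$-regularity of $X(s)$ is needed. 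The only cost of the finite-time statement (rather than a uniform-in-time one) is that the Gronwall factor produces a constant $C_\beta(T)$ that may grow with $T$, which is acceptable.
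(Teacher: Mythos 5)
Your argument is correct and is precisely the ``standard argument'' based on the projection error bound~\eqref{eq:errorPN} that the paper invokes when it omits the proof of Lemma~\ref{lem:strongerror_galerkin}: subtracting the mild formulations~\eqref{eq:mild} and~\eqref{eq:mildgalerkin}, bounding the initial-value, forcing and stochastic-convolution projection errors via~\eqref{eq:errorPN},~\eqref{eq:Falphabis}, Proposition~\ref{propo:momentbounds-exact} and Lemma~\ref{lem:convolution}, and closing with Gronwall using~\eqref{eq:LipF_HH0} and~\eqref{eq:expodecrease}. All steps check out (in fact, under Assumption~\ref{ass:ergo} the Gronwall step even yields a $T$-independent constant, which is stronger than the stated claim).
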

The proof is omitted.

\subsection{Fully discrete scheme: exponential Euler method}\label{sec:fully}

The time-step size is denoted by $\tau$, and without loss of generality it is assumed below that $\tau\in(0,1)$. For any nonnegative integer $m\ge 0$, set $t_m=m\tau$ and introduce the increments of the Wiener process, denoted by $\Delta \IW^Q_m=\IW^Q(t_{m+1})-\IW^Q(t_m)$. For all $t\ge 0$,  set $\ell(t)=\lfloor t/\tau\rfloor$, where $\lfloor\cdot\rfloor$ denotes the integer part function: if $m\ge 0$ and $t\in[t_m,t_{m+1})$, one has $\ell(t)=t_m$.

Applying an exponential Euler method, the following approximation of the solution $(X^N(t))_{t\geq0}$ of the finite dimensional stochastic evolution equation~\eqref{eq:mildgalerkin} is obtained: set the initial value $X^N_0=X^N(0)=\IP_Nx_0$ and for all $m\ge 0$, set
\begin{equation}\label{eq:expoEuler}
X^N_{m+1}=e^{\tau A_{\gamma}}\Bigl(X^N_m+\tau F_N(X^N_m)+\IP_N\Delta \IW^Q_m\Bigr).
\end{equation}
For all $N\in\N$ and all $m\ge 0$, define $u^N_{m}=\Pi_u X^N_{m}$ and $v^N_{m}=\Pi_v X^N_m$. The random variable $X^N_m$ should be interpreted as an approximation of $X^N(t_m)$.

It is straightforward to check that the solution $\bigl(X_m^{N}\bigr)_{m\ge 0}$ of the numerical scheme~\eqref{eq:expoEuler} can be written as follows: for all $m\ge 0$ one has
\begin{equation}\label{eq:mildexpoEuler}
X^N_m=e^{t_m A_{\gamma}}X^N_0+\tau\sum_{\ell=0}^{m-1} e^{(t_m-t_\ell) A_\gamma}F_N(X^N_\ell)+\sum_{\ell=0}^{m-1} e^{(t_m-t_\ell)A_\gamma} \IP_N\Delta \IW^Q_\ell.
\end{equation}
The expression~\eqref{eq:mildexpoEuler} above is a mild formulation for the solution of~\eqref{eq:expoEuler}, which is a discrete time version of the continuous time mild formulations~\eqref{eq:mildgalerkin} and~\eqref{eq:mild} associated with~\eqref{eq:seegalerkin} and~\eqref{eq:see} respectively.

It is convenient to introduce notation for the discrete time version of the stochastic convolution (see~\eqref{eq:convolution} in the continuous time case): for all $N\in\N$ and $m\ge 0$ set
\begin{equation}\label{eq:discreteconvolution}
	\IZ^N_m:=\sum_{\ell=0}^{m-1} e^{(t_m-t_\ell)A_\gamma} \IP_N\Delta \IW^Q_\ell=\int_0^{t_m}e^{(t_m-t_{\ell(s)})A_\gamma}\IP_N\,d\IW^Q(s).
\end{equation}
One obtains the following result, using the same tools as in the proof of Lemma~\ref{lem:convolution}.
\begin{lemma}\label{lem:discreteconvolution}
Let Assumption~\ref{ass:Q} be satisfied. For any $\beta\in(0,1]$ such that the condition~\eqref{eq:Qbeta} holds, for all $p\in[1,\infty)$, one has
\begin{equation}\label{eq:momentboundsdiscreteconvolution}
\underset{N\in\N}\sup~\underset{\tau\in(0,1)}\sup~\underset{m\ge 0}\sup~\E[\|\IZ_m^N\|_{\HH^\beta}^{p}]<\infty.
\end{equation} 
\end{lemma}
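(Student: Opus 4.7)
The plan is to mimic the proof of Lemma~\ref{lem:convolution} almost verbatim, with only one extra ingredient needed to handle the piecewise-constant integrand. First, since $\IZ^N_m$ is defined as a Wiener integral of a deterministic operator-valued integrand, it is a centered Gaussian random variable with values in the Hilbert space $\HH^\beta$. By Gaussian hypercontractivity, for every $p\in[1,\infty)$ there is $c_p\in(0,\infty)$ such that $\E[\|\IZ^N_m\|_{\HH^\beta}^p]\le c_p \bigl(\E[\|\IZ^N_m\|_{\HH^\beta}^2]\bigr)^{p/2}$, so it suffices to establish~\eqref{eq:momentboundsdiscreteconvolution} for $p=2$.

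Next, I would apply It\^o's isometry to the stochastic integral representation of $\IZ^N_m$ in~\eqref{eq:discreteconvolution} to obtain
\begin{equation*}
\E[\|\IZ^N_m\|_{\HH^\beta}^2]=\int_0^{t_m}\sum_{n\in\N}q_n\bigl\|e^{(t_m-t_{\ell(s)})A_\gamma}\IP_N(0,e_n^Q)\bigr\|_{\HH^\beta}^2\,ds.
\end{equation*}
Since $P_N$ is the orthogonal projection on the eigenvectors of $\IL$, a direct inspection of the series definitions of $\|\cdot\|_{H^\alpha}$ for arbitrary $\alpha\in\R$ shows that $\IP_N$ is a contraction on $\HH^\beta$, so $\|\IP_N(0,e_n^Q)\|_{\HH^\beta}\le\|(0,e_n^Q)\|_{\HH^\beta}=\|e_n^Q\|_{H^{\beta-1}}$. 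Applying the exponential decay estimate~\eqref{eq:expodecrease} from Lemma~\ref{lem:semigroup} in the $\HH^\beta$ norm then yields
\begin{equation*}
\bigl\|e^{(t_m-t_{\ell(s)})A_\gamma}\IP_N(0,e_n^Q)\bigr\|_{\HH^\beta}\le \Mg e^{-\rg(t_m-t_{\ell(s)})}\|e_n^Q\|_{H^{\beta-1}}.
\end{equation*}

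The one place where the discrete case differs from the continuous one is the presence of $t_{\ell(s)}$ rather than $s$ in the exponent. This is handled by the elementary bound $t_m-t_{\ell(s)}\ge t_m-s$ combined with $0\le s-t_{\ell(s)}<\tau\le 1$, which gives $e^{-\rg(t_m-t_{\ell(s)})}\le e^{\rg}e^{-\rg(t_m-s)}$ uniformly in $\tau\in(0,1)$. Plugging these estimates into the isometry identity and integrating,
\begin{equation*}
\E[\|\IZ^N_m\|_{\HH^\beta}^2]\le (\Mg)^2 e^{2\rg}\int_0^{t_m} e^{-2\rg(t_m-s)}\,ds\,\sum_{n\in\N}q_n\|e_n^Q\|_{H^{\beta-1}}^2\le \frac{(\Mg)^2 e^{2\rg}}{2\rg}\|Q^{1/2}\|_{\mathcal{L}_2(H,H^{\beta-1})}^2,
\end{equation*}
which is finite by Assumption~\ref{ass:Q} and independent of $N$, $\tau$ and $m$. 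Combined with the Gaussian hypercontractivity step, this proves~\eqref{eq:momentboundsdiscreteconvolution}. There is no genuine obstacle here; the only subtlety, as stressed above, is the uniform-in-$\tau$ control of the shift from $s$ to $t_{\ell(s)}$, which is immediate because $\tau$ is bounded.
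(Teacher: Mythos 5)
Your proof is correct and follows exactly the route the paper intends: the paper omits the proof of this lemma, stating only that it uses ``the same tools as in the proof of Lemma~\ref{lem:convolution}'', namely the Gaussian reduction to $p=2$, It\^o's isometry, the contraction property of $\IP_N$ on $\HH^\beta$, and the decay estimate~\eqref{eq:expodecrease}. The only cosmetic remark is that the factor $e^{\rg}$ is superfluous: since $t_{\ell(s)}\le s$ one has $t_m-t_{\ell(s)}\ge t_m-s$ and hence $e^{-\rg(t_m-t_{\ell(s)})}\le e^{-\rg(t_m-s)}$ directly, with no dependence on $\tau$ at all.
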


Let us then state and prove moment bounds for the solution $\bigl(X_m^N\bigr)_{m\ge 0}$ of~\eqref{eq:expoEuler}. It is worth mentioning that they are uniform with respect to $N\in\N$, $\tau\in(0,1)$ and $m\ge 0$.

\begin{propo}\label{propo:momentbounds-eem}
	Let Assumptions~\ref{ass:ergo} and~\ref{ass:Q} be satisfied. Let $p\in[1,\infty)$. For any $\beta\in[0,1]$, such that the condition~\eqref{eq:Qbeta} is satisfied, there exists $C_{\beta,p}\in(0,\infty)$ such that if the initial condition $x_0$ is a $\mathcal{F}_0$-measurable random variable, which satisfies $\E[\|x_0\|_{\HH^\beta}^{p}]<\infty$, then one has
\begin{equation}\label{eq:momentbounds-eem}
\underset{N\in\N}\sup~\underset{\tau\in(0,1)}\sup~\underset{m\ge 0}\sup~\E[\|X^N_m\|_{\HH^\beta}^{p}]\le C_{\beta,p}\Bigl(1+\E[\|x_0\|_{\HH^\beta}^{p}]\Bigr).
\end{equation}
\end{propo}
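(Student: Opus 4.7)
The plan is to follow the structure of the proof of Proposition~\ref{propo:momentbounds-exact}, based on the discrete mild formulation~\eqref{eq:mildexpoEuler} and on splitting the solution $X_m^N$ into a stochastic convolution part and a remainder. Since $\IZ_m^N$ is already controlled uniformly in $\HH^\beta$ by Lemma~\ref{lem:discreteconvolution}, the problem reduces to bounding $Y_m^N = X_m^N - \IZ_m^N$.

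First I would establish, by subtracting~\eqref{eq:discreteconvolution} from~\eqref{eq:mildexpoEuler}, the discrete mild-type identity
\begin{equation*}
Y_m^N = e^{t_m A_\gamma} \IP_N x_0 + \tau \sum_{\ell=0}^{m-1} e^{(t_m-t_\ell) A_\gamma} F_N\bigl(Y_\ell^N + \IZ_\ell^N\bigr).
\end{equation*}
In the case $\beta = 0$, applying the exponential decay~\eqref{eq:expodecrease} of the semigroup in $\HH$, the Lipschitz estimate~\eqref{eq:LipF_HH0} of $F$ (inherited by $F_N = \IP_N F$ since $\|\IP_N\|_{\mathcal{L}(\HH)}\le 1$), Minkowski's inequality, and the uniform bound on $\bigl(\E[\|\IZ_\ell^N\|_\HH^p]\bigr)^{1/p}$ from Lemma~\ref{lem:discreteconvolution}, one obtains, with $a_m = \bigl(\E[\|Y_m^N\|_\HH^p]\bigr)^{1/p}$, an inequality of the form
\begin{equation*}
a_m \le c_0 + \tilde C_p + \frac{\Lf \Mg}{\sqrt{\lambda_1}} \, \tau \sum_{\ell=0}^{m-1} e^{-\rg(t_m-t_\ell)} a_\ell,
\end{equation*}
where $c_0$ and $\tilde C_p$ depend only on $\Mg, \rg, \Lf, \lambda_1, p$ and $\E[\|x_0\|_\HH^p]$.

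The main technical point is to convert this into a bound on $a_m$ that is uniform in both $m \ge 0$ and $\tau \in (0,1)$. To that end I would exploit the elementary fact that the function $\tau \mapsto \tau/(e^{\rg \tau}-1)$ is decreasing on $(0,\infty)$ with limit $1/\rg$ at $0^+$, which yields
\begin{equation*}
\tau \sum_{\ell=0}^{m-1} e^{-\rg(t_m-t_\ell)} = \frac{\tau\bigl(1-e^{-\rg m \tau}\bigr)}{e^{\rg \tau}-1} \le \frac{1}{\rg}
\end{equation*}
for every $\tau > 0$ and every $m \ge 0$. Setting $M_m = \max_{0 \le j \le m} a_j$, this gives $M_m \le c_0 + \tilde C_p + \bigl(\Mg \Lf/(\sqrt{\lambda_1}\rg)\bigr) M_m$, and since Assumption~\ref{ass:ergo} ensures $\Mg \Lf/(\sqrt{\lambda_1}\rg) < 1$, rearranging provides the required uniform bound on $a_m$, and hence on $\bigl(\E[\|X_m^N\|_\HH^p]\bigr)^{1/p}$ by Minkowski and Lemma~\ref{lem:discreteconvolution}. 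This is the only step with a genuine difficulty compared with the continuous case, where the analogous uniformity follows from the usual continuous Gronwall lemma.

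Finally, for general $\beta \in (0,1]$ satisfying~\eqref{eq:Qbeta}, I would return to the mild formulation~\eqref{eq:mildexpoEuler} for $X_m^N$ and estimate it directly in the $\HH^\beta$-norm, using $\|\IP_N x_0\|_{\HH^\beta} \le \|x_0\|_{\HH^\beta}$, the semigroup decay~\eqref{eq:expodecrease} in $\HH^\beta$, and the bound $\|F_N(x)\|_{\HH^\beta} \le \lambda_1^{(\beta-1)/2}\|F(x)\|_{\HH^1} \le C\bigl(1+\|x\|_\HH\bigr)$ obtained from~\eqref{eq:LipF_HH1} and~\eqref{eq:comparenorms}. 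Taking $L^p$-moments, invoking the $\HH$-moment bound just established, the uniform estimate $\tau \sum e^{-\rg(t_m-t_\ell)} \le 1/\rg$, and the $\HH^\beta$-bound on $\IZ_m^N$ from Lemma~\ref{lem:discreteconvolution}, yields~\eqref{eq:momentbounds-eem} uniformly in $N$, $\tau$, and $m$.
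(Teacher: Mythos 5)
Your proposal is correct and follows the same route as the paper's proof: the decomposition $Y^N_m=X^N_m-\IZ^N_m$, the discrete mild formulation, Lemma~\ref{lem:discreteconvolution} for the stochastic convolution, the recursive inequality for $\bigl(\E[\|Y^N_m\|_{\HH}^p]\bigr)^{1/p}$ in the case $\beta=0$, and then the direct $\HH^\beta$-estimate from~\eqref{eq:mildexpoEuler} using the moment bound already obtained for $\beta=0$. The only difference is how the recursion is closed: the paper applies the discrete Gronwall inequality to $m\mapsto e^{\rg t_m}\bigl(\E[\|Y^N_m\|_{\HH}^p]\bigr)^{1/p}$ under the condition $\Mg\Lf/\sqrt{\lambda_1}<\rg$, whereas you bound the geometric sum explicitly by $\tau\sum_{\ell=0}^{m-1}e^{-\rg(t_m-t_\ell)}\le 1/\rg$ and absorb the maximum $M_m$ using $\Mg\Lf/(\sqrt{\lambda_1}\rg)<1$. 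Your variant is a legitimate and arguably more transparent alternative, since it makes the uniformity in $\tau\in(0,1)$ and $m\ge 0$ completely explicit rather than delegating it to a (suitably sharp version of the) discrete Gronwall lemma; the only point to add is that the absorption step requires knowing a priori that $M_m<\infty$ for each fixed $m$, $N$, $\tau$, which follows by an elementary induction since $F$ is Lipschitz and the Wiener increments have all moments.
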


\begin{proof}
Like in the proof of Proposition~\ref{propo:momentbounds-exact}, let us introduce the auxiliary process $(Y^N_m)_{m\ge 0}$ given by
	\[
	Y^N_m=X^N_m-\IZ^N_m,\ \ m\ge 0,
	\]
where $(X^N_m)_{m\in\N}$ and $(Z^N_m)_{m\in\N}$ are given by \eqref{eq:expoEuler} and \eqref{eq:discreteconvolution}, respectively. One then has, for all $m\ge 0$,	
\[
Y^N_m=e^{t_m A_\gamma}X^N_0+\tau\sum_{\ell=0}^{m-1}e^{(t_m-t_\ell)\tau A_\gamma}F_N(Y^N_\ell+\IZ^N_\ell).
\]
Let us first treat the case $\beta=0$. Owing to the inequality~\eqref{eq:momentboundsdiscreteconvolution} from Lemma~\ref{lem:discreteconvolution} above, in order to prove the inequality~ \eqref{eq:momentbounds-eem} for $\beta=0$, it suffices to prove that one has
\[
\underset{N\in\N}\sup~\underset{\tau\in(0,1)}\sup~\underset{m\ge 0}\sup~\E[\|Y^N_m\|_{\HH}^{p}]\le C_{\beta,p}\Bigl(1+\E[\|x_0\|_{\HH}^{p}]\Bigr).
\]

Using the inequality~\eqref{eq:expodecrease} from Lemma~\ref{lem:semigroup} and the Lipschitz continuity property~\eqref{eq:LipF_HH0} of $F$ from $\HH$ to $\HH$, one obtains, for all $m\ge 0$,
\begin{align*}
\|Y^N_m\|_\HH\le \Mg e^{-\rg t_m}\|x_0\|_\HH+
\tau\Mg\sum_{\ell=0}^{m-1}e^{-\rg (t_m-t_\ell)}\bigl(\frac{L_f}{\sqrt{\lambda_1}}\|Y^N_\ell\|_\HH+\|F_N(\IZ^N_\ell)\|_\HH\bigr).
\end{align*}
Using Minkowskii's inequality and the moment bounds~\eqref{eq:momentboundsdiscreteconvolution} from Lemma~\ref{lem:discreteconvolution} for the discrete stochastic convolution $\IZ^N_m$, for any $p\in[0,\infty)$, there exists $C_p\in(0,\infty)$ such that one has for all $m\ge 0$
\begin{align*}
\bigl(\E[\|Y^N_m\|_{\HH}^{p}]\bigr)^{\frac1p}&\le C_p\Bigl(1+\bigl(\E[\|x_0\|_{\HH}^{p}]\bigr)^{\frac1p}\Bigr)+\tau\frac{\Mg L_f}{\sqrt{\lambda_1}}\sum_{\ell=0}^{m-1}e^{-\rg (m-\ell)\tau}\bigl(\E[\|Y^N_\ell\|_{\HH}^{p}]\bigr)^{\frac1p}.
\end{align*}
The condition~\eqref{eq:ergo} ensures that one has $\frac{\Lf \Mg}{\sqrt{\lambda_1}}<\rg$. As a consequence, applying the discrete Gronwall inequality to the mapping $m\mapsto e^{\rg m\tau}\bigl(\E[\|Y^N_m\|_{\HH}^{p}]\bigr)^{\frac1p}$, one obtains
\[
\underset{m\ge 0}\sup~\E[\|Y^N_m\|_{\HH}^{p}]\le C_{p}\Bigl(1+\E[\|x_0\|_{\HH}^{p}]\Bigr),
\]
where $C_p\in(0,\infty)$ is independent of the discretization parameters $\tau\in(0,1)$ and $N\in\N$.

Let us now treat the case $\beta\in(0,1]$, and assume that the condition~\eqref{eq:Qbeta} holds and the initial value $x_0$ satisfies $\E[\|x_0\|_{\HH^\beta}^{p}]<\infty$. Since $F$ is globally Lipschitz continuous from $\HH$ to $\HH^1$, see~\eqref{eq:LipF_HH1}, it is also globally Lipschitz continuous from $\HH$ to $\HH^\beta$, owing to the inequality~\eqref{eq:comparenorms}. Therefore there exists $C_\beta\in(0,\infty)$ such that, using the discrete mild formulation~\eqref{eq:mildexpoEuler}, for all $m\ge 0$ one has
\begin{align*}
\|X^N_m\|_{\HH^\beta}\le \Mg e^{-\rg m\tau}\|x_0\|_{\HH^\beta}+C_\beta
\tau\sum_{\ell=0}^{m-1}e^{-\rg(t_m-t_\ell)}\bigl(1+\|X^N_\ell\|_{\HH}\bigr)+\|\IZ_m^N\|_{\HH^\beta}.
\end{align*}
Using Minkowskii's inequality, the moment bounds for $\beta=0$ established above, and the moment bounds~\eqref{eq:momentboundsconvolution} from Lemma~\ref{lem:discreteconvolution} for the discrete stochastic convolution $\IZ_m^N$, one obtains
\begin{align*}
\underset{m\ge 0}\sup~\E[\|X^N_m\|_{\HH^\beta}^{p}]&\le C_{\beta,p}\Bigl(1+\E[\|x_0\|_{\HH^\beta}^{p}]\Bigr),
\end{align*}
where $C_{\beta,p}\in(0,\infty)$ is independent of the discretization parameters $\tau\in(0,1)$ and $N\in\N$.

The proof of Proposition~\ref{propo:momentbounds-eem} is thus completed.
\end{proof}

We now state an existence and uniqueness result of an invariant distribution denoted by $\mu_\infty^{N,\tau}$. Observe that the bounds in Proposition~\ref{propo:ergo-eem} below are uniform with respect to the discretization parameters $N\in\N$ and $\tau\in(0,1)$.

\begin{propo}\label{propo:ergo-eem}
Let Assumptions~\ref{ass:ergo} and~\ref{ass:Q} be satisfied. For any $N\in\N$ and for any $\tau\in(0,1)$, the $\HH_N$-valued Markov chain $\bigl(X^N_m\bigr)_{m\in\N}$ admits a unique invariant probability distribution $\mu_\infty^{N,\tau}$, which satisfies the following properties.
\begin{itemize}
\item If $x_0^N$ has distribution $\mu_\infty^{N,\tau}$, then for all $m\in\N$, $X^N_m$ has distribution $\mu_\infty^{N,\tau}$.
\item For all $\beta\in[0,1]$ such that the condition~\eqref{eq:Qbeta} is satisfied and all $p\in[1,\infty)$, one has
\begin{equation}
\underset{N\in\N}\sup~\underset{\tau\in(0,1)}\sup~\int \|x\|_{\HH^\beta}^{2p}d\mu_\infty^{N,\tau}(x)<\infty.
\end{equation}
\item There exists $C\in(0,\infty)$, such that if $\varphi:\HH\to\R$ is a Lipschitz continuous mapping, for all $N\in\N$, $\tau\in(0,1)$ and $m\ge 0$ and one has
\begin{equation}
\big|\E[\varphi(X^N_m)]-\int\varphi d\mu_\infty^{N,\tau}\big|\le C{\rm Lip}(\varphi)e^{-\rho(\gamma,f)m\tau}\bigl(1+\E[\|X^N(0)\|_{\HH}]\bigr),
\end{equation}
where $\rho(\gamma,f)>0$ is defined by~\eqref{eq:ergo2} and ${\rm Lip}(\varphi)=\underset{x_1,x_2\in \HH, x_1\neq x_2}\sup~\frac{|\varphi(x_2)-\varphi(x_1)|}{\|x_2-x_1\|_H}$.
\end{itemize}
\end{propo}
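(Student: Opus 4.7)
The plan is to adapt the proof of Proposition~\ref{propo:ergo-exact} to discrete time, reducing everything to a single pathwise contraction estimate for the scheme~\eqref{eq:expoEuler}. The three items of the proposition---invariance, uniform moment bounds on $\mu_\infty^{N,\tau}$, and exponential ergodicity---will all flow from this contraction together with the already-established moment bounds of Proposition~\ref{propo:momentbounds-eem}, and crucially, all constants obtained in this way will be manifestly independent of $N$ and $\tau$.

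The heart of the argument is the following claim: for any two initial values $x_{0,1}^N,x_{0,2}^N\in\HH$ and the associated solutions $\bigl(X_{m,1}^N\bigr)_{m\ge 0}$ and $\bigl(X_{m,2}^N\bigr)_{m\ge 0}$ of~\eqref{eq:expoEuler} driven by the same Wiener process $\bigl(\IW^Q(t)\bigr)_{t\ge 0}$, one has
\[
\|X_{m,2}^N-X_{m,1}^N\|_\HH \le \Mg\, e^{-\rho(\gamma,f)m\tau}\,\|x_{0,2}^N-x_{0,1}^N\|_\HH \quad \text{for all } m\ge 0.
\]
I would prove this by subtracting the discrete mild formulations~\eqref{eq:mildexpoEuler}, using the semigroup decay estimate~\eqref{eq:expodecrease} from Lemma~\ref{lem:semigroup} together with the Lipschitz continuity~\eqref{eq:LipF_HH0} of $F$ (which transfers to $F_N=\IP_NF$ without loss, since $\IP_N$ has operator norm one on $\HH$), to obtain an inequality of the form
\[
r_m \le \Mg e^{-\rg m\tau}r_0 + \tau\frac{\Mg\Lf}{\sqrt{\lambda_1}}\sum_{\ell=0}^{m-1}e^{-\rg(m-\ell)\tau}\,r_\ell
\]
with $r_m=\|X_{m,2}^N-X_{m,1}^N\|_\HH$. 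Multiplying by $e^{\rg m\tau}$ and applying the discrete Gronwall inequality, the condition $\Mg\Lf/\sqrt{\lambda_1}<\rg$ from Assumption~\ref{ass:ergo} then yields the decay rate $\rho(\gamma,f)=\rg-\Mg\Lf/\sqrt{\lambda_1}$ defined in~\eqref{eq:ergo2}.

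Granted the contraction estimate, existence of an invariant distribution is obtained through a Krylov--Bogolyubov argument: the uniform moment bounds~\eqref{eq:momentbounds-eem} in $\HH^\beta$ (with $\beta\in(0,1]$ from Assumption~\ref{ass:Q}) yield tightness of the laws of $X_m^N$ on $\HH$, since $\HH^\beta$ embeds compactly into $\HH$ as a consequence of the spectral asymptotics of $\IL$. A weak limit point of the Cesaro averages of these laws produces a candidate $\mu_\infty^{N,\tau}$, which is invariant under the one-step transition kernel of~\eqref{eq:expoEuler}. Uniqueness is immediate from the contraction, by coupling two invariant measures via shared noise and letting $m\to\infty$. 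The $\HH^\beta$ moment bounds for $\mu_\infty^{N,\tau}$ follow from Proposition~\ref{propo:momentbounds-eem} applied to a stationary solution combined with Fatou's lemma. The exponential ergodicity estimate is then obtained for Lipschitz $\varphi$ by coupling $X_m^N$ with a stationary solution $\tilde X_m^N$ starting from $\mu_\infty^{N,\tau}$ and sharing the noise: the contraction gives
\[
\bigl|\E[\varphi(X_m^N)]-\textstyle\int\varphi\,d\mu_\infty^{N,\tau}\bigr|\le \mathrm{Lip}(\varphi)\,\Mg\,e^{-\rho(\gamma,f)m\tau}\,\E[\|X_0^N-\tilde X_0^N\|_\HH],
\]
with the last expectation bounded by $\E[\|X_0^N\|_\HH]+\int\|x\|_\HH\,d\mu_\infty^{N,\tau}(x)$ and hence by $C\bigl(1+\E[\|X_0^N\|_\HH]\bigr)$ via the moment bound just established for $\mu_\infty^{N,\tau}$.

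I do not foresee a genuine obstacle: the discrete Gronwall step is the only new computation compared to the continuous case, and the structure of~\eqref{eq:expoEuler} makes it transparent that $F_N$ inherits its Lipschitz constant from $F$ while $e^{\tau A_\gamma}$ remains contractive with rate $\rg$ on $\HH$ (uniformly in $\tau$). The one point requiring care is to verify that tightness holds \emph{uniformly} in $N$ and $\tau$---but this reduces directly to the $N$- and $\tau$-independent moment bounds~\eqref{eq:momentbounds-eem} in $\HH^\beta$---and to track that none of the constants in the final ergodicity estimate depend on the discretization parameters.
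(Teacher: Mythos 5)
Your proposal is correct and follows exactly the route the paper intends: the paper omits the proof of this proposition precisely because it is the discrete-time analogue of the proof of Proposition~\ref{propo:ergo-exact}, namely a pathwise contraction estimate obtained from the mild formulation~\eqref{eq:mildexpoEuler}, the semigroup decay~\eqref{eq:expodecrease}, the Lipschitz bound~\eqref{eq:LipF_HH0} and a (discrete) Gronwall argument, followed by the standard Krylov--Bogolyubov/coupling machinery. The only point to phrase carefully is the moment bound on $\mu_\infty^{N,\tau}$: rather than applying Proposition~\ref{propo:momentbounds-eem} to a stationary solution (which presupposes the integrability you are proving), start from $x_0=0$, use the uniform bounds~\eqref{eq:momentbounds-eem} along the trajectory, and pass to the limit via weak convergence and Fatou's lemma, as your mention of Fatou already suggests.
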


\subsection{Approximation of the invariant distribution}\label{sec:main}

In this section, we state the main results of this manuscript.

If $\varphi:\HH\to\R$ is a mapping of class $\mathcal{C}^2$, with bounded first and second order derivatives, define
\begin{equation}
\vvvert \varphi\vvvert_1=\underset{x\in\HH}\sup~\underset{h\in\HH}\sup~\frac{|D\varphi(x).h|}{\|h\|_\HH}~,\quad \vvvert \varphi\vvvert_2=\underset{x\in\HH}\sup~\underset{h,k\in\HH}\sup~\frac{|D^2\varphi(x).(h,k)|}{\|h\|_\HH \|k\|_\HH},
\end{equation}
where $D\varphi$ and $D^2\varphi$ are the first and second order derivatives of $\varphi$.

\begin{theo}\label{theo:galerkin}
Let Assumptions~\ref{ass:ergo},~\ref{ass:F} and~\ref{ass:Q} be satisfied.

Let $\beta\in[0,1]$ such that the condition~\eqref{eq:Qbeta} holds. There exists $C_\beta\in(0,\infty)$ such that if $\E[\|x_0\|_{\HH^{\beta}}]<\infty$, then for all $T\in(0,\infty)$, all $N\in\N$ and any mapping $\varphi:\HH\to\R$ of class $\mathcal{C}^2$, with bounded first and second order derivatives, one has
\begin{equation}\label{eq:theo_galerkin}
\big|\E[\varphi(X(T))]-\E[\varphi(X^N(T))]\big|\le C_\beta e^{-\rho(\gamma,f)T}\vvvert\varphi\vvvert_1\E[\|x_0\|_{\HH}]+C_\beta \bigl(\vvvert\varphi\vvvert_1+\vvvert\varphi\vvvert_2\bigr)\lambda_N^{-\beta\theta}\bigl(1+\E[\|x_0\|_{\HH^{\beta}}]\bigr),
\end{equation}
where $\theta=1/2$ in the general case and $\theta=1$ in the commutative noise case.

Moreover, for all $N\in\N$ and any mapping $\varphi:\HH\to\R$ of class $\mathcal{C}^2$, with bounded first and second order derivatives, one has
\begin{equation}\label{eq:theo_galerkin-measures}
\big|\int \varphi d\mu_\infty-\int \varphi d\mu_\infty^N \big|\le C_\beta\bigl(\vvvert\varphi\vvvert_1+\vvvert\varphi\vvvert_2\bigr)\lambda_N^{-\beta\theta}.
\end{equation}
\end{theo}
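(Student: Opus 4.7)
The plan is to establish the finite-time weak error estimate~\eqref{eq:theo_galerkin} via the Kolmogorov equation associated with the semi-discrete scheme~\eqref{eq:seegalerkin}, and then to deduce the invariant-distribution bound~\eqref{eq:theo_galerkin-measures} by sending $T\to\infty$. Given $\varphi$ of class $\mathcal{C}^2$ with bounded first and second derivatives, introduce the auxiliary function
$$\Phi^N(t,x)=\E[\varphi(X^N(t,x))],\quad t\ge 0,~x\in\HH,$$
where $X^N(t,x)$ denotes the Galerkin mild solution~\eqref{eq:mildgalerkin} started from $\IP_N x$. The auxiliary Proposition~\ref{propo:Phi} (mentioned in the introduction) provides bounds on $D\Phi^N$ and $D^2\Phi^N$ in appropriate norms which are uniform in $N\in\N$ and decay exponentially in time at rate $\rho(\gamma,f)$. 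By construction $\Phi^N$ satisfies the Kolmogorov equation
$$\partial_t\Phi^N(t,x)=D\Phi^N(t,x)\cdot(A_\gamma x+F_N(x))+\tfrac{1}{2}{\rm Tr}\bigl(\IP_N Q_0\IP_N D^2\Phi^N(t,x)\bigr),$$
where $Q_0$ denotes the covariance operator of the $\HH$-valued Wiener process $\IW^Q$.

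The second step is to apply It\^o's formula to $t\mapsto\Phi^N(T-t,X(t))$, where $\bigl(X(t)\bigr)_{0\le t\le T}$ is the exact mild solution~\eqref{eq:mild} with initial value $x_0$, and to take expectations. Using $\Phi^N(0,x)=\varphi(\IP_N x)$ together with the Kolmogorov equation yields the error representation
\begin{align*}
\E[\varphi(X(T))]-\E[\varphi(X^N(T))]&=\E\bigl[\varphi(X(T))-\varphi(\IP_N X(T))\bigr]\\
&\quad+\int_0^T\E\bigl[D\Phi^N(T-t,X(t))\cdot(I-\IP_N)F(X(t))\bigr]\,dt\\
&\quad+\tfrac{1}{2}\int_0^T\E\bigl[{\rm Tr}\bigl((Q_0-\IP_N Q_0\IP_N)D^2\Phi^N(T-t,X(t))\bigr)\bigr]\,dt.
\end{align*}
The initial-value term is bounded by $\vvvert\varphi\vvvert_1\|(I-\IP_N)X(T)\|_\HH\le\vvvert\varphi\vvvert_1\lambda_N^{-\beta/2}\|X(T)\|_{\HH^\beta}$ via~\eqref{eq:errorPN} and the moment bound of Proposition~\ref{propo:momentbounds-exact} (with a refined second-order Taylor expansion used in the commutative case). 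The drift term is estimated using~\eqref{eq:errorPN}, the growth bound~\eqref{eq:Falphabis}, and the exponentially decaying bound on $D\Phi^N$ from Proposition~\ref{propo:Phi}. The trace term is handled by decomposing $Q_0-\IP_N Q_0\IP_N$ using $I-\IP_N$ on either side and applying a Hilbert--Schmidt estimate together with the regularity of $D^2\Phi^N$ and the condition~\eqref{eq:Qbeta}. The exponential prefactor $e^{-\rho(\gamma,f)T}$ in~\eqref{eq:theo_galerkin} arises from the decay of $D\Phi^N(T,\cdot)$ acting on the initial state, traced back to the contraction estimate~\eqref{eq:claim_ergo} for the Galerkin process.

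For~\eqref{eq:theo_galerkin-measures}, take $x_0$ with distribution $\mu_\infty$, so that $\E[\varphi(X(T))]=\int\varphi\,d\mu_\infty$ for every $T\ge 0$ by Proposition~\ref{propo:ergo-exact}, and $\int\|x\|_{\HH^\beta}d\mu_\infty(x)$ is finite by~\eqref{eq:moment-mu-exact}. Applying~\eqref{eq:theo_galerkin} gives
$$\Bigl|\int\varphi\,d\mu_\infty-\E[\varphi(X^N(T))]\Bigr|\le C_\beta e^{-\rho(\gamma,f)T}\vvvert\varphi\vvvert_1\int\|x\|_\HH\,d\mu_\infty+C_\beta(\vvvert\varphi\vvvert_1+\vvvert\varphi\vvvert_2)\lambda_N^{-\beta\theta},$$
and sending $T\to\infty$ while identifying $\E[\varphi(X^N(T))]\to\int\varphi\,d\mu_\infty^N$ via~\eqref{eq:ergo-galerkin} from Proposition~\ref{propo:ergo-galerkin} then yields~\eqref{eq:theo_galerkin-measures}.

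The main technical obstacle will be sharply estimating the trace term uniformly in $N$: controlling $\|(Q_0-\IP_N Q_0\IP_N)D^2\Phi^N\|_{{\rm Tr}}$ requires $D^2\Phi^N$-regularity in $\HH^{-\beta}$-type norms from Proposition~\ref{propo:Phi}, and the commutative noise assumption is precisely what allows the factor $\lambda_N^{-\beta}$ to be extracted on both sides of $Q_0$ simultaneously, giving $\theta=1$, whereas in the non-commutative case only $\lambda_N^{-\beta/2}$ is available on one side, giving $\theta=1/2$. A secondary subtlety, in the commutative case, is refining the initial-value and drift contributions to the same order $\lambda_N^{-\beta}$ so that the trace estimate is not spoilt; this typically uses a second-order Taylor expansion of $\varphi$ combined with the regularity of $D^2\Phi^N$.
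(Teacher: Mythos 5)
Your high-level strategy (Kolmogorov equation for the Galerkin semigroup, It\^o's formula along a trajectory, then $T\to\infty$ with $x_0\sim\mu_\infty$ for the measure estimate) is the right one, and your deduction of~\eqref{eq:theo_galerkin-measures} from~\eqref{eq:theo_galerkin} is exactly the paper's. However, your error representation contains a genuine flaw. Since you define $\Phi^N(t,x)=\E[\varphi(X^N(t,x))]$ with $X^N(t,x)$ started from $\IP_N x$, the function $\Phi^N(t,\cdot)$ depends on $x$ only through $\IP_N x$, so $D\Phi^N(t,x).h$ and $D^2\Phi^N(t,x).(h,k)$ depend only on $\IP_N h$ and $\IP_N k$. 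Consequently your drift term $D\Phi^N\cdot(I-\IP_N)F(X(t))$ and your trace term ${\rm Tr}\bigl((Q_0-\IP_N Q_0\IP_N)D^2\Phi^N\bigr)$ are both \emph{identically zero}; the actual residual produced by It\^o's formula applied to $t\mapsto\Phi^N(T-t,X(t))$ is $D\Phi^N(T-t,X(t)).\IP_N\bigl(F(X(t))-F(\IP_N X(t))\bigr)$, together with the terminal mismatch $\E[\varphi(X(T))-\varphi(\IP_N X(T))]$. This matters because you locate the $\theta$-dichotomy (and "the main technical obstacle") in the trace term, which in your own setup does not exist. Moreover the terminal mismatch is only $O(\lambda_N^{-\beta/2})$: the first-order term $\E[D\varphi(\IP_N X(T)).(I-\IP_N)X(T)]$ has no reason to be $O(\lambda_N^{-\beta})$ without a substantial additional argument (the high modes of $X(T)$ are neither centered nor independent of $\IP_N X(T)$ once the nonlinearity is present), and it does not decay in $T$ either, so "a refined second-order Taylor expansion" does not rescue the commutative case $\theta=1$.

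The paper circumvents precisely these difficulties by comparing two Galerkin levels: it proves the bound for $\big|\E[\varphi(X^{\NN}(T))]-\E[\varphi(X^N(T))]\big|$ with $\NN\ge N$, applying It\^o's formula to $\Phi^{\NN}(T-t,X^N(t))$, and then lets $\NN\to\infty$ using the finite-time strong convergence of Lemma~\ref{lem:strongerror_galerkin}. In that decomposition there is no terminal mismatch at all; instead there is an initial-value term $\E[\Phi^{\NN}(T,\IP_{\NN}x_0)]-\E[\Phi^{\NN}(T,\IP_N x_0)]$, which is not small in $N$ but carries the factor $e^{-\rho(\gamma,f)T}\E[\|x_0\|_{\HH}]$ from~\eqref{eq:DPhi} --- this is exactly the first term of~\eqref{eq:theo_galerkin} and it disappears in the $T\to\infty$ limit. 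Because $X^N(t)\in\HH_N$, the drift residual becomes $(\IP_{\NN}-\IP_N)F(X^N(t))$, estimated at order $\lambda_N^{-\beta}$ via the smoothing property~\eqref{eq:Falpha}--\eqref{eq:Falphabis} of $F$ in $\HH^{2\beta}$ rather than via~\eqref{eq:Dfalpha}; and the trace correction terms genuinely involve $\IP_{\NN}-\IP_N$ (they do not vanish since $\Phi^{\NN}$ sees directions in $\HH_{\NN}\setminus\HH_N$), and it is there --- not in the drift or terminal terms --- that the commutative/non-commutative dichotomy $\theta\in\{1,\tfrac12\}$ arises. To repair your argument you would either need to adopt this two-level comparison, or supply a genuinely new estimate of $\E[\varphi(X(T))-\varphi(\IP_N X(T))]$ at order $\lambda_N^{-\beta}$ in the commutative case.
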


\begin{theo}\label{theo:eem}
Let Assumptions~\ref{ass:ergo},~\ref{ass:F} and~\ref{ass:Q} be satisfied. 

Let $\beta\in[0,1]$ such that the condition~\eqref{eq:Qbeta} holds. There exists $C_{\beta}\in(0,\infty)$ such that if $\E\bigl[\|x_0\|_{\HH^{\beta}}^2\bigr]<\infty$, then for all $\tau\in(0,1)$, $N\in\N$ and any mapping $\varphi:\HH\to\R$ of class $\mathcal{C}^2$, with bounded first and second order derivatives, for all $M\ge 0$ one has
\begin{equation}\label{eq:theo_eem}
\big|\E[\varphi(X^N(t_M))]-\E[\varphi(X^N_M)]\big|\le C_{\beta}\bigl(\vvvert\varphi\vvvert_1+\vvvert\varphi\vvvert_2\bigr)\tau^{\min(2\theta\beta,1)}\bigl(1+\E[\|x_0\|_{\HH^\beta}^2]\bigr),
\end{equation}
where $\theta=1/2$ in the general case and $\theta=1$ in the commutative noise case.

Moreover, for all $\tau\in(0,1)$, $N\in\N$ and any mapping $\varphi:\HH\to\R$ of class $\mathcal{C}^2$, with bounded first and second order derivatives, one has
\begin{equation}\label{eq:theo_eem-measures}
\big|\int \varphi d\mu_\infty^N-\int \varphi d\mu_\infty^{N,\tau}\big|\le C_{\beta}\bigl(\vvvert\varphi\vvvert_1+\vvvert\varphi\vvvert_2\bigr)\tau^{\min(2\theta\beta,1)}.
\end{equation}
\end{theo}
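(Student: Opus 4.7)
The plan is to analyze the weak error via the Kolmogorov equation of the semi-discrete process~\eqref{eq:seegalerkin}: introduce $\Phi_N(t,x)=\E[\varphi(X^N(t))|X^N(0)=x]$, and rely crucially on Proposition~\ref{propo:Phi}, which provides bounds on $D\Phi_N(t,\cdot)$ and $D^2\Phi_N(t,\cdot)$ in suitable norms that are uniform in $N\in\N$ and include an exponential decay factor of the form $e^{-\rho(\gamma,f)t}$. Setting $T=t_M$, the Markov property of $\bigl(X^N(t)\bigr)_{t\ge 0}$ gives the telescoping decomposition
\[
\E[\varphi(X^N(T))]-\E[\varphi(X^N_M)]=\sum_{m=0}^{M-1}\E\bigl[\Phi_N(T-t_{m+1},\widetilde{X}^N_{m+1})-\Phi_N(T-t_{m+1},X^N_{m+1})\bigr],
\]
where $\widetilde{X}^N_{m+1}$ denotes the value at time $\tau$ of the semi-discrete process started from $X^N_m$ and driven by the same noise increment $\Delta\IW^Q_m$ as the fully-discrete step.

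For the local error I would write $\widetilde{X}^N_{m+1}-X^N_{m+1}=R_m^{\rm drift}+R_m^{\rm stoch}$, where
\[
R_m^{\rm stoch}=\int_{t_m}^{t_{m+1}}\bigl(e^{(t_{m+1}-s)A_\gamma}-e^{\tau A_\gamma}\bigr)\IP_N\,d\IW^Q(s)
\]
is a centered Gaussian random variable independent of $\mathcal{F}_{t_m}$, and $R_m^{\rm drift}$ collects the deterministic remainder coming from $\int_0^{\tau}e^{(\tau-s)A_\gamma}F_N(\widetilde{X}^N(s))\,ds-\tau e^{\tau A_\gamma}F_N(X^N_m)$. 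Then Taylor expand $\Phi_N(T-t_{m+1},\cdot)$ to second order around the $\mathcal{F}_{t_m}$-measurable quantity $\bar{X}^N_{m+1}:=e^{\tau A_\gamma}\bigl(X^N_m+\tau F_N(X^N_m)\bigr)$. Conditioning on $\mathcal{F}_{t_m}$, the independence and mean-zero property of $R_m^{\rm stoch}$ eliminate the first-order stochastic contribution, so the leading stochastic term is $\E\bigl[D^2\Phi_N(T-t_{m+1},\bar{X}^N_{m+1}).(R_m^{\rm stoch},R_m^{\rm stoch})\bigr]$, plus mixed contributions involving $R_m^{\rm drift}$.

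The main technical obstacle is to prove that this quadratic stochastic term is of order $\tau^{1+2\theta\beta}$ per step. Using It\^o's isometry one reduces to controlling an expression of the form
\[
\int_0^{\tau}\big\|\bigl(I-e^{sA_\gamma}\bigr)e^{(\tau-s)A_\gamma}\IP_N(0,Q^{1/2}\cdot)\big\|_{\mathcal{L}_2(H,\HH)}^2\,ds
\]
weighted by the bilinear form coming from $D^2\Phi_N$, then apply Lemma~\ref{lem:regulsemigroup}, which gives $\|(I-e^{sA_\gamma})y\|_\HH\le Cs^\alpha\|y\|_{\HH^\alpha}$, together with Assumption~\ref{ass:Q}. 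The dichotomy $\theta=1$ versus $\theta=1/2$ emerges at this stage: in the commutative noise case the regularity $\beta$ of $Q^{1/2}$ is directly transferable, giving $s^{2\beta}$; in the non-commutative case only $\alpha=\beta/2$ is available after an interpolation between $\HH^0$ and $\HH^{\beta}$, yielding $s^{\beta}$. The drift remainder $R_m^{\rm drift}$ is handled by standard Taylor estimates using Lemma~\ref{lem:regulsemigroup}, the $N$-uniform Lipschitz continuity of $F_N$ from~\eqref{eq:LipF_HH0}, and Proposition~\ref{propo:momentbounds-eem}, producing an $O(\tau^2)$ local contribution once paired with $D\Phi_N$. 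Summing and exploiting $\sum_{m=0}^{M-1}\tau\,e^{-\rho(\gamma,f)(T-t_{m+1})}\le C$ uniformly in $T$ delivers~\eqref{eq:theo_eem} with the rate $\tau^{\min(2\theta\beta,1)}$.

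The passage from~\eqref{eq:theo_eem} to~\eqref{eq:theo_eem-measures} uses the invariance of $\mu_\infty^{N,\tau}$: integrating with respect to $\mu_\infty^{N,\tau}$, one writes
\[
\int\varphi\,d\mu_\infty^{N,\tau}-\int\varphi\,d\mu_\infty^N=\int\E_x[\varphi(X^N_M)-\varphi(X^N(t_M))]\,d\mu_\infty^{N,\tau}(x)+\int\Bigl(\E_x[\varphi(X^N(t_M))]-\int\varphi\,d\mu_\infty^N\Bigr)d\mu_\infty^{N,\tau}(x),
\]
estimates the first integral by~\eqref{eq:theo_eem} together with the uniform moment bound on $\mu_\infty^{N,\tau}$ from Proposition~\ref{propo:ergo-eem}, and sends $M\tau\to\infty$ to make the second integral vanish via the ergodicity bound~\eqref{eq:ergo-galerkin} of Proposition~\ref{propo:ergo-galerkin}. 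The principal obstacle throughout is obtaining the $N$-uniform regularity and exponential time decay of $\Phi_N$ captured by Proposition~\ref{propo:Phi}, without which neither the Taylor analysis nor the summation uniform in $M$ would close.
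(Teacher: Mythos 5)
Your overall architecture (Kolmogorov function $\Phi^N$ with the $N$-uniform, exponentially decaying bounds of Proposition~\ref{propo:Phi}, telescoping over the time steps, and the final passage to~\eqref{eq:theo_eem-measures} via Propositions~\ref{propo:ergo-eem} and~\ref{propo:ergo-galerkin}) matches the paper, but there is a genuine gap in the local error analysis, at the drift term. You claim that $R_m^{\rm drift}$ paired with $D\Phi_N$ yields an $O(\tau^2)$ local contribution using only Lemma~\ref{lem:regulsemigroup}, the Lipschitz bound~\eqref{eq:LipF_HH0} and the moment bounds. This fails: $R_m^{\rm drift}$ contains the piece $\int_0^{\tau}e^{\tau A_\gamma}\bigl(F_N(\widetilde{X}^N(t_m+s))-F_N(X^N_m)\bigr)\,ds$, and with the plain $\HH\to\HH$ Lipschitz estimate the integrand is controlled by $\|\widetilde{X}^N(t_m+s)-X^N_m\|_{\HH}$, which is at best $O(s^{\min(\beta,1/2)})$ (the Wiener increment alone forbids anything better than $s^{1/2}$ in $\HH$). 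This gives an $O(\tau^{1+\min(\beta,1/2)})$ local term, hence only the global rate $\tau^{\min(\beta,1/2)}$ --- strictly worse than $\tau^{\min(2\theta\beta,1)}$ whenever $2\theta\beta>\min(\beta,1/2)$, e.g.\ for all $\beta>0$ in the commutative case. The rate in the theorem hinges on an idea your proposal does not contain: because $F(x)=(0,f(u))$, the relevant quantity is $\|f(u^N_m)-f(\tilde u^N(t))\|_{H^{-1}}$, which by the negative-norm condition~\eqref{eq:Dfalpha}/\eqref{eq:Dfalpha2} is controlled by the increment of the $u$-component alone in a \emph{weak} norm; and since $du=v\,dt+\cdots$ has no martingale part in its first component, that increment is $O(\tau)$ in $H^{\beta-1}$ (inequality~\eqref{eq:incrementbounds-u-continuousversion-eem} of Proposition~\ref{propo:continuousversion-eem}), interpolating to $O(\tau^{2\beta})$ in $H^{-\beta}$ for $\beta\le\frac12$. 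Without this, the drift term caps the rate and the theorem is not reached.

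Two further points on the stochastic part. First, your second-order Taylor expansion around $\bar X^N_{m+1}$ leaves third-order remainders of size $\|e^{\tau A_\gamma}\IP_N\Delta\IW^Q_m\|_{\HH}^3\sim\tau^{3/2}$ per step (global rate $1/2$ only); removing them requires an exact integral remainder, but then $D^2\Phi^N$ is evaluated at a point that is not $\mathcal{F}_{t_m}$-measurable, and the It\^o-isometry computation you invoke needs a third derivative of $\Phi^N$ that Proposition~\ref{propo:Phi} does not (and, for $\varphi\in\mathcal{C}^2$, cannot) supply. The paper avoids this entirely by applying It\^o's formula to $t\mapsto\Phi^N(t_M-t,\tilde X^N(t))$ along the continuous interpolation~\eqref{eq:continuousversion-eem} and comparing generators, so that only $D\Phi^N$ and $D^2\Phi^N$ appear and there is no Taylor remainder at all; you should adopt that route. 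Second, the dominant second-order stochastic contribution is the cross term $D^2\Phi^N.\bigl(e^{\tau A_\gamma}\IP_N\Delta\IW^Q_m,R_m^{\rm stoch}\bigr)$ (the analogue of the paper's $\varepsilon_m^3,\varepsilon_m^4$), not $(R_m^{\rm stoch},R_m^{\rm stoch})$; it is in this cross term, where one slot carries no smallness, that the dichotomy $\theta=1$ versus $\theta=\tfrac12$ actually arises, via $\sum_j\|Q^{1/2}e_j^Q\|_{H^{2\beta-1}}\|Q^{1/2}e_j^Q\|_{H^{-1}}$ being reducible to $\sum_j\|Q^{1/2}e_j^Q\|_{H^{\beta-1}}^2$ only when $\IL$ and $Q$ commute. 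Your proposal omits this term and attributes the dichotomy to the wrong place.
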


Combining Theorems~\ref{theo:galerkin} and~\ref{theo:eem}, one finally obtains the following result.
\begin{cor}\label{cor:full}
Let Assumptions~\ref{ass:ergo},~\ref{ass:F} and~\ref{ass:Q} be satisfied. Let $\beta\in[0,1]$ such that the condition~\eqref{eq:Qbeta} holds. There exists $C_{\beta,\gamma}\in(0,\infty)$ such that, for all $\tau\in(0,1)$, $N\in\N$ and any mapping $\varphi:\HH\to\R$ of class $\mathcal{C}^2$, with bounded first and second order derivatives, one has 
\begin{equation}\label{eq:cor_full-measures}
\big|\int \varphi d\mu_\infty-\int \varphi d\mu_\infty^{N,\tau}\big|\le C_{\beta}\bigl(\vvvert\varphi\vvvert_1+\vvvert\varphi\vvvert_2\bigr)\Bigl( \lambda_N^{-\beta\theta}+\tau^{\min(2\theta\beta,1)}\Bigr),
\end{equation}
where $\theta=1/2$ in the general case and $\theta=1$ in the commutative noise case.
\end{cor}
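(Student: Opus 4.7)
The plan is immediate: since Corollary~\ref{cor:full} concerns the distance between $\mu_\infty$ and $\mu_\infty^{N,\tau}$, and we have intermediate estimates with $\mu_\infty^N$ in both Theorem~\ref{theo:galerkin} and Theorem~\ref{theo:eem}, I simply insert $\mu_\infty^N$ and apply the triangle inequality. Concretely, for any test function $\varphi$ of class $\mathcal{C}^2$ with bounded first and second derivatives, I would write
\begin{equation*}
\Bigl|\int \varphi\, d\mu_\infty - \int \varphi\, d\mu_\infty^{N,\tau}\Bigr|
\le \Bigl|\int \varphi\, d\mu_\infty - \int \varphi\, d\mu_\infty^{N}\Bigr|
+ \Bigl|\int \varphi\, d\mu_\infty^{N} - \int \varphi\, d\mu_\infty^{N,\tau}\Bigr|.
\end{equation*}

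Next I would estimate the first term on the right-hand side using the semi-discrete bound~\eqref{eq:theo_galerkin-measures} from Theorem~\ref{theo:galerkin}, which yields a contribution of order $\lambda_N^{-\beta\theta}\bigl(\vvvert\varphi\vvvert_1+\vvvert\varphi\vvvert_2\bigr)$, and estimate the second term using the fully-discrete bound~\eqref{eq:theo_eem-measures} from Theorem~\ref{theo:eem}, which yields a contribution of order $\tau^{\min(2\theta\beta,1)}\bigl(\vvvert\varphi\vvvert_1+\vvvert\varphi\vvvert_2\bigr)$. Crucially, the constants in~\eqref{eq:theo_galerkin-measures} and~\eqref{eq:theo_eem-measures} are independent of $N$ and $\tau$, so taking $C_{\beta}$ in the corollary to be (for instance) twice the maximum of the two constants produced by Theorems~\ref{theo:galerkin} and~\ref{theo:eem} gives exactly~\eqref{eq:cor_full-measures}, with the same value of $\theta$ ($1/2$ in the general case, $1$ in the commutative noise case) inherited from both theorems.

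There is no real obstacle here: all the analytical work — existence and uniqueness of $\mu_\infty$, $\mu_\infty^{N}$, $\mu_\infty^{N,\tau}$, and the two weak-error estimates against test functions in $\mathcal{C}_b^2$ — has been accomplished in Propositions~\ref{propo:ergo-exact}, \ref{propo:ergo-galerkin}, \ref{propo:ergo-eem} and Theorems~\ref{theo:galerkin}, \ref{theo:eem}. The only thing to verify is that the same regularity class of $\varphi$ (namely $\mathcal{C}^2$ with bounded first and second derivatives, measured by $\vvvert\cdot\vvvert_1+\vvvert\cdot\vvvert_2$) is used in both statements, which is indeed the case. Therefore the proof reduces to the two-line argument above and can be written very concisely.
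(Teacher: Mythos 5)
Your proposal is correct and matches the paper's own argument: Corollary~\ref{cor:full} is obtained exactly by inserting $\mu_\infty^N$, applying the triangle inequality, and invoking the estimates~\eqref{eq:theo_galerkin-measures} and~\eqref{eq:theo_eem-measures}, whose constants are uniform in $N$ and $\tau$. Nothing further is needed.
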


\section{Convergence analysis}\label{sec:proofs}

\subsection{Auxiliary results -- Kolmogorov equation}\label{sec:aux}

One of the key tools in the error analysis is to introduce solutions of Kolmogorov equations. Instead of dealing with infinite dimensional Kolmogorov equation, we consider the spectral Galerkin approximation introduced in Section~\ref{sec:galerkin} in order to justify all the computations in a finite dimensional setting. All the regularity estimates below do not depend on the spatial discretization parameter.

Let $\varphi:\HH\to\R$ be a mapping of class $\mathcal{C}^2$, with bounded first and second order derivatives. For all $N\in\N$, introduce the mapping $\varphi^N:\HH^N\to\R$ given by
\[
\varphi^N(x)=\varphi(x)~,\quad x\in\HH_N.
\]
For all $N\in\N$, $x\in\HH_N$ and $t\ge 0$, define
\begin{equation}\label{eq:Phi}
\Phi^N(t,x)=\E[\varphi^N(X_x^N(t))]
\end{equation}
where $\bigl(X_x^N(t)\bigr)_{t\ge 0}$ denotes the solution of~\eqref{eq:seegalerkin} with initial value $X_x^N(0)=x$. The mapping $\Phi^N$ is solution of the Kolmogorov equation
\begin{equation}\label{eq:Kolmogorov}
\begin{aligned}
\partial_t\Phi^N(t,x)&=\mathcal{L}^N\Phi^N(t,x)\\
&=D\Phi^N(t,x).\left(A_\gamma x+\IP_NF(x)\right)+\frac12\sum_{j\in\N}D^2\Phi^N(t,x).\left(\IP_N(0,Q^{1/2}e_j^Q),\IP_N(0,Q^{1/2}e_j^Q)\right)
\end{aligned}
\end{equation}
where $\mathcal{L}^N$ is the infinitesimal generator associated with the stochastic evolution equation~\eqref{eq:seegalerkin}, and $D\Phi^N(x)$ and $D^2\Phi^N(x)$ denote the first and second order derivatives of $\Phi^N$ with respect to the variable $x\in\HH_N$.

\begin{propo}\label{propo:Phi}
Let Assumptions~\ref{ass:ergo},~\ref{ass:F} and~\ref{ass:Q} be satisfied. There exists $C\in(0,\infty)$ such that for any mapping $\varphi:\HH\to\R$ of class $\mathcal{C}^2$ with bounded first and second order derivatives one has for all $t\ge 0$
\begin{align}
\underset{N\in\N}\sup~\underset{x\in\HH_N}\sup~\underset{h\in\HH_N}\sup~\frac{|D\Phi^N(t,x).h|}{\|h\|_\HH}&\le C\vvvert \varphi\vvvert_1 e^{-\rho(\gamma,f)t}\label{eq:DPhi}\\
\underset{N\in\N}\sup~\underset{x\in\HH_N}\sup~\underset{h,k\in\HH_N}\sup~\frac{|D^2\Phi^N(t,x).(h,k)|}{\|h\|_\HH\|k\|_\HH}&\le C\bigl(\vvvert \varphi\vvvert_1+\vvvert \varphi\vvvert_2\bigr) e^{-\rho(\gamma,f)t}\label{eq:D2Phi}.
\end{align}

\end{propo}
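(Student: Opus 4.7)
\medskip

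\noindent\textbf{Proof proposal.} The plan is to differentiate the representation~\eqref{eq:Phi} under the expectation and express the first and second derivatives of $\Phi^N(t,\cdot)$ in terms of the first and second variation processes of the Galerkin SDE~\eqref{eq:seegalerkin}. All estimates are uniform in $N$ because the projector $\IP_N$ is a contraction on every $\HH^\alpha$, commutes with $e^{tA_\gamma}$, and the constants $\Mg$, $\rg$, $\Lf/\sqrt{\lambda_1}$, $C_F$ do not depend on $N$.

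First I would fix $h\in\HH_N$ and introduce the first variation $\eta_x^h(t)=D_x X_x^N(t).h\in \HH_N$, which satisfies
\[
\eta_x^h(t)=e^{tA_\gamma}h+\int_0^t e^{(t-s)A_\gamma}\IP_N DF(X_x^N(s)).\eta_x^h(s)\,ds.
\]
Using Lemma~\ref{lem:semigroup}, the inequality $\|DF(y).h\|_\HH\le \tfrac{\Lf}{\sqrt{\lambda_1}}\|h\|_\HH$ (which follows from~\eqref{eq:LipF_HH0}), and Gronwall's inequality applied to $t\mapsto e^{\rg t}\|\eta_x^h(t)\|_\HH$, I obtain $\|\eta_x^h(t)\|_\HH\le \Mg e^{-\rho(\gamma,f)t}\|h\|_\HH$, where the key cancellation is precisely the ergodicity condition~\eqref{eq:ergo} of Assumption~\ref{ass:ergo}. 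Since $D\Phi^N(t,x).h=\E[D\varphi(X_x^N(t)).\eta_x^h(t)]$, this immediately gives~\eqref{eq:DPhi}.

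For the second derivative I would write, with $\zeta_x^{h,k}(t)=D_x^2 X_x^N(t).(h,k)$,
\[
D^2\Phi^N(t,x).(h,k)=\E\bigl[D^2\varphi(X_x^N(t)).(\eta_x^h(t),\eta_x^k(t))\bigr]+\E\bigl[D\varphi(X_x^N(t)).\zeta_x^{h,k}(t)\bigr].
\]
The first term is controlled by $\vvvert\varphi\vvvert_2\,\Mg^2 e^{-2\rho(\gamma,f)t}\|h\|_\HH\|k\|_\HH$ via Step 1. For the second term I would use that $\zeta_x^{h,k}$ satisfies the inhomogeneous linear equation
\[
\zeta_x^{h,k}(t)=\int_0^t e^{(t-s)A_\gamma}\IP_N\Bigl(DF(X_x^N(s)).\zeta_x^{h,k}(s)+D^2F(X_x^N(s)).(\eta_x^h(s),\eta_x^k(s))\Bigr)ds,
\]
apply Lemma~\ref{lem:semigroup}, the bound $\|D^2F(y).(h,k)\|_\HH\le C_F\|h\|_\HH\|k\|_\HH$, and the Step 1 bound on $\eta_x^h,\eta_x^k$, and then close with a second Gronwall estimate.

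The main obstacle is the last Gronwall step: the forcing in the $\zeta$ equation decays like $e^{-2\rho(\gamma,f)t}$ while the convolution kernel decays like $e^{-\rg t}$, and a direct computation gives $\|\zeta_x^{h,k}(t)\|_\HH\le C(1+t)e^{-\rho(\gamma,f)t}\|h\|_\HH\|k\|_\HH$ in the worst (resonant) case where $\rg=2\rho(\gamma,f)$. The polynomial prefactor has to be absorbed into the exponential at the price of an infinitesimal loss in the rate, which is harmless since $\rho(\gamma,f)>0$ and the constant $C$ in~\eqref{eq:DPhi}--\eqref{eq:D2Phi} is allowed to depend on this adjustment; the crucial point is that all constants arising in the argument depend only on $\gamma$, $\Lf$, $C_F$, $\lambda_1$, and on the bounds in Assumptions~\ref{ass:ergo} and~\ref{ass:F}, so the estimates are uniform in $N\in\N$, $x\in\HH_N$, and $h,k\in\HH_N$.
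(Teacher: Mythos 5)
Your proposal follows exactly the paper's route: represent $D\Phi^N$ and $D^2\Phi^N$ via the first and second variation processes, write these in mild form, and combine Lemma~\ref{lem:semigroup}, the bound $\|DF(y).h\|_{\HH}\le \Lf\lambda_1^{-1/2}\|h\|_{\HH}$ and Gronwall applied to $t\mapsto e^{\rho(\gamma)t}\|\cdot\|_{\HH}$; the uniformity in $N$ for the reasons you give (contractivity of $\IP_N$, $N$-independent constants) is also the paper's argument. The only point to correct is your treatment of the final Gronwall step for $\zeta_x^{h,k}$. First, an ``infinitesimal loss in the rate'' cannot be hidden in the constant $C$: the statement~\eqref{eq:D2Phi} claims decay at the exact rate $\rho(\gamma,f)$, and $(1+t)e^{-\rho(\gamma,f)t}\le C e^{-\rho' t}$ only for $\rho'<\rho(\gamma,f)$, so as written you would prove a strictly weaker estimate. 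Second, the loss is in fact not needed. Setting $a=\Mg\Lf/\sqrt{\lambda_1}=\rho(\gamma)-\rho(\gamma,f)$ and $g(t)=e^{\rho(\gamma)t}\|\zeta_x^{h,k}(t)\|_{\HH}$, the mild formulation gives
\[
g(t)\le a\int_0^t g(s)\,ds+C\|h\|_{\HH}\|k\|_{\HH}\int_0^t e^{(\rho(\gamma)-2\rho(\gamma,f))s}\,ds,
\]
and Gronwall yields $\|\zeta_x^{h,k}(t)\|_{\HH}\le C\bigl(e^{-\rho(\gamma,f)t}+(1+t)e^{-\min(\rho(\gamma),2\rho(\gamma,f))t}\bigr)\|h\|_{\HH}\|k\|_{\HH}$. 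The possibly resonant polynomial factor multiplies $e^{-\min(\rho(\gamma),2\rho(\gamma,f))t}$, not $e^{-\rho(\gamma,f)t}$, and since $\min(\rho(\gamma),2\rho(\gamma,f))>\rho(\gamma,f)$ (the forcing decays at the doubled rate $2\rho(\gamma,f)$ and the kernel at rate $\rho(\gamma)>\rho(\gamma,f)$), one has $(1+t)e^{-\min(\rho(\gamma),2\rho(\gamma,f))t}\le C e^{-\rho(\gamma,f)t}$ with a genuine constant. Hence the bound~\eqref{eq:zeta} of the paper, $\|\zeta_x^{h,k}(t)\|_{\HH}\le Ce^{-\rho(\gamma,f)t}\|h\|_{\HH}\|k\|_{\HH}$, holds with no degradation of the rate, and the rest of your argument then gives~\eqref{eq:D2Phi} exactly as stated.
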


\begin{proof}
Let $N\in\N$. Owing to Assumption~\ref{ass:F}, the mapping $F_N:\HH_N\to\HH_N$ is twice continuously differentiable. For any $t\ge 0$, $x\in\HH_N$ and $h, k\in\HH_N$, one has
\begin{align*}
D\Phi^N(t,x).h&=\E[D\varphi(X_x^N(t)).\eta_h^N(t)],\\
D^2\Phi^N(t,x).(h,k)&=\E[D^2\varphi(X_x^N(t)).(\eta_h^N(t),\eta_k^N(t))]+\E[D\varphi(X_x^N(t)).\zeta_{h,k}^N(t)],
\end{align*}
where the $\HH_N$-valued processes $\bigl(\eta_h^N(t)\bigr)_{t\ge 0}$ and $\bigl(\zeta_{h,k}^N(t)\bigr)_{t\ge 0}$ are solutions of the first and second order variational equations
\begin{align*}
\frac{d\eta_h^N(t)}{dt}&=A_\gamma \eta_h^N(t)+DF_N(X_x^N(t)).\eta_h^N(t)\\
\frac{d\zeta_{h,k}^N(t)}{dt}&=A_\gamma \zeta_{h,k}^N(t)+DF_N(X_x^N(t)).\zeta_{h,k}^N(t)+D^2F_N(X_x^N(t)).(\eta_h^N(t),\eta_k^N(t)),
\end{align*}
with initial values $\eta_h^N(0)=h$ and $\zeta_{h,k}^N(0)=0$. Expressing the solution $\eta_h^N$ of the first variational equation in a mild formulation, for all $t\ge 0$ one has
\[
\eta_h^N(t)=e^{tA_\gamma}h+\int_0^t e^{(t-s)A_\gamma}DF_N(X_x^N(s)).\eta_h^N(s) ds
\]
and using the inequality~\eqref{eq:expodecrease} from Lemma~\ref{lem:semigroup}, the Lipschitz continuity property~\eqref{eq:LipF_HH0} of $F$ from $\HH$ to $\HH$, one obtains for all $t\ge 0$
\[
\|\eta_h^N(t)\|_{\HH}\le \Mg e^{-\rg t}\|h\|_{\HH}+\frac{\Mg\Lf}{\sqrt{\lambda_1}}\int_0^t e^{-\rg(t-s)}\|\eta_h^N(s)\|_\HH ds.
\]
Applying Gronwall's lemma to the mapping $t\mapsto e^{\rg t}\|\eta_h^N(t)\|_\HH$ and using the condition $\rho(\gamma,f)=\rg-\Mg\Lf\lambda_1^{-1/2}>0$ (see Equation~\eqref{eq:ergo2}) yields the following inequality: for all $t\ge 0$ one has
\begin{equation}\label{eq:eta}
\|\eta_h^N(t)\|_{\HH}\le \Mg e^{-\rho(\gamma,f)t}\|h\|_\HH.
\end{equation}
Combining the inequality~\eqref{eq:eta} with the expression of $D\Phi^N(t,x).h$ above yields
\[
\big|D\Phi^N(t,x).h\big|\le \vvvert\varphi\vvvert_1 \E[\|\eta_h^N(t)\|_\HH]\le \Mg\vvvert\varphi\vvvert_1 e^{-\rho(\gamma,f)t}\|h\|_\HH
\]
and concludes the proof of the first inequality~\eqref{eq:DPhi}.

The proof of the second inequality~\eqref{eq:D2Phi} is performed using similar arguments. On the one hand, using the inequality~\eqref{eq:eta} above, one has for all $t\ge 0$
\[
\big|\E[D^2\varphi(X_x^N(t)).(\eta_h^N(t),\eta_k^N(t))]\big|\le \vvvert\varphi\vvvert_2 \E[\|\eta_h^N(t)\|_\HH \|\eta_k^N(t)\|_\HH]\le \vvvert\varphi\vvvert_2\Mg^2 e^{-2\rho(\gamma,f)t}\|h\|_\HH\|k\|_\HH.
\]
On the other hand, expressing the solution $\zeta_{h,k}^N$ of the second variational equation in a mild formulation, for all $t\ge 0$ one has
\[
\zeta_{h,k}^N(t)=\int_0^t e^{(t-s)A_\gamma}DF_N(X_x^N(s)).\zeta_{h,k}^N(s) ds+\int_0^t e^{(t-s)A_\gamma}D^2F_N(X_x^N(s)).(\eta_h^N(s),\eta_k^N(s))ds
\]
and using the inequality~\eqref{eq:expodecrease} from Lemma~\ref{lem:semigroup}, the Lipschitz continuity property~\eqref{eq:LipF_HH0} of $F$ from $\HH$ to $\HH$ and the boundedness of the second-order derivative of $F$ from $\HH$ to $\HH$ (Assumption~\ref{ass:F}), and finally the inequality~\eqref{eq:eta} above, one obtains for all $t\ge 0$
\[
\|\zeta_{h,k}^N(t)\|_\HH\le \frac{\Mg\Lf}{\sqrt{\lambda_1}}\int_0^t e^{-\rg(t-s)}\|\zeta_{h,k}^N(s)\|_\HH ds +C\int_0^t e^{-\rg(t-s)}e^{-2\rho(\gamma,f) s}ds \|h\|_\HH \|k\|_\HH,
\]
where $C\in(0,\infty)$ is a positive real number (which does not depend on $N$).

Applying Gronwall's lemma to the mapping $t\mapsto e^{\rg t}\|\zeta_{h,k}^N(t)\|_\HH$ and using the condition $\rho(\lambda,f)=\rg-\Mg\Lf\lambda_1^{-1/2}>0$ (see Equation~\eqref{eq:ergo2}), one obtains the following inequality: for all $t\ge 0$ one has
\begin{equation}\label{eq:zeta}
\|\zeta_{h,k}^N(t)\|_{\HH}\le Ce^{-\rho(\gamma,f)t}\|h\|_\HH\|k\|_\HH.
\end{equation}
As a result, one has for all $t\ge 0$
\[
\big|\E[D\varphi(X_x^N(t)).\zeta_{h,k}^N(t)]\big|\le \vvvert\varphi\vvvert_1 \E[\|\zeta_h^N(t)\|_{\HH}]\le C\vvvert\varphi\vvvert_1  e^{-\rho(\gamma,f)t}\|h\|_\HH\|k\|_\HH,
\]
and combining the results finally concludes the proof of the second inequality~\eqref{eq:D2Phi}.
\end{proof}

\subsection{Proof of Theorem~\ref{theo:galerkin}}\label{sec:proofgalerkin}

Let $\varphi:\HH\to\R$ be a mapping of class $\mathcal{C}^2$, with bounded first and second order derivatives. Without loss of generality in the sequel it is assumed that $\vvvert\varphi\vvvert_1+\vvvert\varphi\vvvert_2\le 1$ to simplify the computations.

The objective of this section is to prove that the following claim holds: for all $\beta\in[0,1]$ such that the condition~\eqref{eq:Qbeta} holds, there exists $C_\beta\in(0,\infty)$ such that if $\E[\|x_0\|_{\HH^{\beta}}]<\infty$, then for all $T\in(0,\infty)$ and for all integers $\NN\ge N$ one has
\begin{equation}\label{eq:claim_galerkin}
\big|\E[\varphi(X^{\NN}(T))]-\E[\varphi(X^N(T))]\big|\le C_\beta e^{-\rho(\gamma,f)T}\E[\|x_0\|_{\HH}]+C_\beta \lambda_N^{-\beta\theta}\bigl(1+\E[\|x_0\|_{\HH^{\beta}}]\bigr),
\end{equation}
where $\theta=1/2$ in the general case and $\theta=1$ in the commutative noise case.

Observe that the right-hand side of~\eqref{eq:claim_galerkin} is independent of $\NN$, therefore using Lemma~\ref{lem:strongerror_galerkin} and letting $\NN\to\infty$ one obtains the inequality~\eqref{eq:theo_galerkin}.

Let us now prove~\eqref{eq:claim_galerkin}. The positive integer $\NN$ is introduced in order to use the results from Section~\ref{sec:aux} and in particular Proposition~\ref{propo:Phi}. Using the auxiliary function $\Phi^{\NN}$ given by~\eqref{eq:Phi} (with $N=\NN$), the left-hand side of~\eqref{eq:claim_galerkin} is written as
\begin{align*}
\E[\varphi(X^{\NN}(T))]&-\E[\varphi(X^N(T)]=\E[\Phi^{\NN}(T,X^{\NN}(0))]-\E[\Phi^{\NN}(0,X^N(T))]\\
&=\E[\Phi^{\NN}(T,X^{\NN}(0))]-\E[\Phi^{\NN}(T,X^N(0))]+\E[\Phi^{\NN}(T,X^N(0))]-\E[\Phi^{\NN}(0,X^N(T))].
\end{align*}
On the one hand, recall that $X^{\NN}(0)=\IP_{\NN}x_0$ and that $X^N(0)=\IP_Nx_0$, therefore using the inequality~\eqref{eq:DPhi} from Proposition~\ref{propo:Phi} one obtains for all $\NN\ge N$
\[
\big|\E[\Phi^{\NN}(T,X^{\NN}(0))]-\E[\Phi^{\NN}(T,X^N(0))]\big|\le Ce^{-\rho(\gamma,f)T}\E[\|X^{\NN}(0)-X^N(0)\|_\HH]\le Ce^{-\rho(\gamma,f)T}\E[\|x_0\|_\HH].
\]
On the other hand, using It\^o's formula, and the property that $\Phi^{\NN}$ is solution of the Kolmogorov equation~\eqref{eq:Kolmogorov} (with $N=\NN$), one has
\begin{align*}
\E[\Phi^{\NN}(0,X^N(T))]&-\E[\Phi^{\NN}(T,X^N(0))]=\int_{0}^{T}\E\left[D\Phi^{\NN}(T-t,X^N(t)).\Bigl(F_{\NN}(X^N(t))-F_N(X^N(t))\Bigr) \right] dt\\
&+\frac12\int_{0}^{T}\sum_{j\in\N}\E\left[D^2\Phi^{\NN}(T-t,X^N(t)).\Bigl(\IP_{\NN}(0,Q^{1/2}e_j^Q),\IP_{\NN}(0,Q^{1/2}e_j^Q)\Bigr) \right] dt\\
&-\frac12\int_{0}^{T}\sum_{j\in\N}\E\left[D^2\Phi^{\NN}(T-t,X^N(t)).\Bigl(\IP_N(0,Q^{1/2}e_j^Q),\IP_N(0,Q^{1/2}e_j^Q)\Bigr) \right] dt\\
&=\epsilon_{N,\NN}^1(T)+\epsilon_{N,\NN}^2(T)+\epsilon_{N,\NN}^3(T)
\end{align*}
where
\begin{align*}
\epsilon_{N,\NN}^1(T)&=\int_{0}^{T}\E\left[D\Phi^{\NN}(T-t,X^N(t)).\Bigl(F_{\NN}(X^N(t))-F_N(X^N(t))\Bigr) \right] dt\\
\epsilon_{N,\NN}^2(T)&=\frac12\int_{0}^{T}\sum_{j\in\N}\E\left[D^2\Phi^{\NN}(T-t,X^N(t)).\Bigl(\bigl(\IP_{\NN}-\IP_N\bigr)(0,Q^{1/2}e_j^Q),\IP_{\NN}(0,Q^{1/2}e_j^Q)\Bigr) \right] dt\\
\epsilon_{N,\NN}^3(T)&=\frac12\int_{0}^{T}\sum_{j\in\N}\E\left[D^2\Phi^{\NN}(T-t,X^N(t)).\Bigl(\IP_N(0,Q^{1/2}e_j^Q),\bigl(\IP_{\NN}-\IP_N\bigr)(0,Q^{1/2}e_j^Q)\Bigr) \right] dt.
\end{align*}
For the first error term, using the inequality~\eqref{eq:DPhi} from Proposition~\ref{propo:Phi} and the inequality~\eqref{eq:errorPN}, one has
\[
|\epsilon_{N,\NN}^1(T)|\le C_\beta\lambda_N^{-\beta}\int_{0}^{T}e^{-\rho(\gamma,f)(T-t)}\E[\|F(X^N(t))\|_{\HH^{2\beta}}]dt.
\]
If $\beta\in[0,1/2]$, using the inequality~\eqref{eq:Falphabis} and the moment bounds~\eqref{eq:momentbounds-galerkin} from Proposition~\ref{propo:momentbounds-galerkin}, one has
\[
\underset{N\in\N}\sup~\underset{t\ge 0}\sup~\E[\|F(X^N(t))\|_{\HH^{2\beta}}]\le C\Bigl(1+\underset{N\in\N}\sup~\underset{t\ge 0}\sup~\E[\|X^N(t)\|_{\HH}]\Bigr)\le C\bigl(1+\E[\|x_0\|_{\HH}]\bigr).
\]
If $\beta\in[1/2,1]$, using the inequality~\eqref{eq:Falpha} and the moment bounds~\eqref{eq:momentbounds-galerkin} from Proposition~\ref{propo:momentbounds-galerkin}, one has
\[
\underset{N\in\N}\sup~\underset{t\ge 0}\sup~\E[\|F(X^N(t))\|_{\HH^{2\beta}}]\le C_\beta\Bigl(1+\underset{N\in\N}\sup~\underset{t\ge 0}\sup~\E[\|X^N(t)\|_{\HH^\beta}]\Bigr)\le C\bigl(1+\E[\|x_0\|_{\HH^\beta}]\bigr).
\]
Therefore one obtains
\[
\underset{\NN\ge N}\sup~\underset{T\ge 0}\sup~|\epsilon_{N,\NN}^1(T)|\le C_\beta\lambda_N^{-\beta}\bigl(1+\E[\|x_0\|_{\HH^\beta}]\bigr).
\]

For the second error term, using the inequality~\eqref{eq:D2Phi} from Proposition~\ref{propo:Phi}, and the inequality~\eqref{eq:errorPN}, one obtains
\begin{align*}
|\epsilon_{N,\NN}^2(T)|&\le C\int_{0}^{T}e^{-\rho(\gamma,f)(T-t)}dt\sum_{j\in\N}\|(I-\IP_N\bigr)(0,Q^{1/2}e_j^Q)\|_\HH \|(0,Q^{1/2}e_j^Q)\|_{\HH}\\
&\le C_\beta\lambda_N^{-\beta/2}\sum_{j\in\N}\|(0,Q^{1/2}e_j^Q)\|_{\HH^{\beta}} \|(0,Q^{1/2}e_j^Q)\|_{\HH}\\
&\le C_\beta\lambda_N^{-\beta/2}\sum_{j\in\N}\|Q^{1/2}e_j^Q\|_{H^{\beta-1}}^2\\
&\le C_\beta\lambda_N^{-\beta/2}.
\end{align*}
In the commutative noise case ($\theta=1$), higher order of convergence is achieved: one has
\begin{align*}
|\epsilon_{N,\NN}^2(T)|&\le C\int_{0}^{T}e^{-\rho(\lambda,f)(T-t)}dt\sum_{j\in\N}\|(I-\IP_N\bigr)(0,Q^{1/2}e_j^Q)\|_\HH \|(0,Q^{1/2}e_j^Q)\|_{\HH}\\
&\le C_\beta\lambda_N^{-\beta}\sum_{j\in\N}\|(0,Q^{1/2}e_j^Q)\|_{\HH^{2\beta}} \|(0,Q^{1/2}e_j^Q)\|_{\HH}\\
&\le C_\beta\lambda_N^{-\beta}\sum_{j\in\N}\|Q^{1/2}e_j^Q\|_{H^{2\beta-1}}\|Q^{1/2}e_j^Q\|_{H^{-1}}\\
&\le C_\beta\lambda_N^{-\beta}\sum_{j\in\N}\|Q^{1/2}e_j^Q\|_{H^{\beta-1}}^2\\
&\le C_\beta\lambda_N^{-\beta}.
\end{align*}

The treatment of the third error term $\epsilon_{N,\NN}^3(T)$ is similar to the treatment of the second error term $\epsilon_{N,\NN}^2(T)$, the details are omitted. It is worth observing that $\epsilon_{N,\NN}^3(T)=0$ in the commutative noise case.

Gathering the estimates for the error terms obtained above, one obtains
\[
\big|\E[\Phi^{\NN}(0,X^N(T))]-\E[\Phi^{\NN}(T,X^N(0))]\big|\le C_\beta\lambda_N^{-\theta\beta}\bigl(1+\E[\|x_0\|_{\HH^\beta}]\bigr),
\]
where we recall that $\theta=1$ in the commutative noise case and $\theta=1/2$ otherwise. This concludes the proof of the weak error estimate~\eqref{eq:theo_galerkin}. Combining that result with Propositions~\ref{propo:ergo-exact} and~\ref{propo:ergo-galerkin}, choosing an arbitrary initial value $x_0$ and letting $T\to\infty$ yields~\eqref{eq:theo_galerkin-measures}. The proof of Theorem~\ref{theo:galerkin} is thus completed.

\subsection{Properties of an auxiliary process}

In order to prove Theorem~\ref{theo:eem}, it is convenient to introduce an auxiliary continuous time process $\bigl(\tilde{X}^N(t)\bigr)_{t\ge 0}$, defined as follows: for all $m\ge 0$ and all $t\in[t_m,t_{m+1}]$, set
\begin{equation}\label{eq:continuousversion-eem}
\tilde{X}^N(t)=e^{(t-t_m)A_\gamma}\bigl(X^N_m+(t-t_m)F_N(X^N_m)+\IP_N\bigl(\IW^Q(t)-\IW^Q(t_m)\bigr)\bigr). 
\end{equation}
Observe that for all $m\ge 0$ one has $\underset{t\to t_m}\lim~\tilde{X}^N(t)=X^N_m=\tilde{X}^N(t_m)$, therefore the auxiliary process $\bigl(\tilde{X}^N(t)\bigr)_{t\ge 0}$ is continuous. In addition, this process is a mild solution of the equation
\begin{equation}\label{eq:see-continuousversion-eem}
d\tilde{X}^N(t)=A_\gamma \tilde{X}^N(t)dt+e^{(t-t_{\ell(t)})A_\gamma}F_N(X^N_{\ell(t)})dt+e^{(t-t_{\ell(t)})A_\gamma}\IP_Nd\IW^Q(t),
\end{equation}
where we recall the notation $\ell(t)=\lfloor t/\tau\rfloor$.

For all $t\ge 0$, define $\tilde{u}^N(t)=\Pi_u \tilde{X}^N(t)$ and $\tilde{v}^N(t)=\Pi_v \tilde{X}^N(t)$.

Let us state and prove some properties of the auxiliary process.

\begin{propo}\label{propo:continuousversion-eem}
Let Assumptions~\ref{ass:ergo} and~\ref{ass:Q} be satisfied. Let $\beta\in[0,1]$, such that the condition~\eqref{eq:Qbeta} is satisfied. For all $p\in[1,\infty)$, there exists $C_{\beta,p}\in(0,\infty)$ such that if the initial value $x_0$ is a $\mathcal{F}_0$-measurable random variable, which satisfies $\E[\|x_0\|_{\HH^\beta}^2]<\infty$, then one has
\begin{equation}\label{eq:momentbounds-continuousversion-eem}
\underset{N\in\N}\sup~\underset{\tau\in(0,1)}\sup~\underset{t\ge 0}\sup~\E\bigl[\|\tilde{X}^N(t)\|_{\HH^{\beta}}^{p}\bigr]\le C_{\beta,p}\bigl(1+\E[\|x_0\|_{\HH^{\beta}}^p]\bigr).
\end{equation}
In addition, for all $N\in\N$, $\tau\in(0,1)$ and $t\ge 0$, one has
\begin{equation}\label{eq:incrementbounds-continuousversion-eem}
\bigl(\E[\|\tilde{X}^N(t)-X_{\ell(t)}^N\|_{\HH}^{p}]\bigr)^{\frac1p}\le C_{\beta,p}\tau^{\min(\beta,\frac12)}\bigl(1+\bigl(\E[\|x_0\|_{\HH^{\beta}}^p]\bigr)^{\frac1p}\bigr).
\end{equation}
Finally, for all $N\in\N$, $\tau\in(0,1)$ and $t\ge 0$, one has
\begin{equation}\label{eq:incrementbounds-u-continuousversion-eem}
\bigl(\E[\|\tilde{u}^N(t)-\tilde{u}^N(t_{\ell(t)})\|_{H^{\beta-1}}^{p}]\bigr)^{\frac1p}\le C_{\beta,p}\tau\bigl(1+\bigl(\E[\|x_0\|_{\HH^{\beta}}^p]\bigr)^{\frac1p}\bigr).
\end{equation}
\end{propo}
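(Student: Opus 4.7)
The plan is to prove the three claims starting from the explicit formula~\eqref{eq:continuousversion-eem} for $\tilde{X}^N(t)$ on $[t_m,t_{m+1}]$, namely
\[
\tilde{X}^N(t)=e^{(t-t_m)A_\gamma}\bigl(X^N_m+(t-t_m)F_N(X^N_m)+\IP_N(\IW^Q(t)-\IW^Q(t_m))\bigr),
\]
together with the tools already established earlier in the paper. For the moment bounds~\eqref{eq:momentbounds-continuousversion-eem}, I would take the $\HH^\beta$-norm, pull out the semigroup via Lemma~\ref{lem:semigroup} (which produces only a harmless multiplicative constant $\Mg$), and then control each of the three summands separately: $\E[\|X^N_m\|_{\HH^\beta}^p]$ by Proposition~\ref{propo:momentbounds-eem}, $\E[\|F_N(X^N_m)\|_{\HH^\beta}^p]$ via the inequality $\|F(x)\|_{\HH^\beta}\le\|F(x)\|_{\HH^1}\le C(1+\|x\|_\HH)$ combined again with Proposition~\ref{propo:momentbounds-eem}, and the Wiener increment via~\eqref{eq:incrementsWQ} which gives $\E[\|\IW^Q(t)-\IW^Q(t_m)\|_{\HH^\beta}^p]^{1/p}\le C_{\beta,p}\tau^{1/2}$. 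All three bounds are uniform in $N,\tau$ and $t$.

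For the increment bound~\eqref{eq:incrementbounds-continuousversion-eem}, setting $s=t-t_{\ell(t)}\in[0,\tau]$, I would split
\[
\tilde{X}^N(t)-X^N_{\ell(t)}=(e^{sA_\gamma}-I)X^N_{\ell(t)}+s\,e^{sA_\gamma}F_N(X^N_{\ell(t)})+e^{sA_\gamma}\IP_N\bigl(\IW^Q(t)-\IW^Q(t_{\ell(t)})\bigr)
\]
and estimate each summand in $\HH$. Lemma~\ref{lem:regulsemigroup} applied with $\alpha=\beta$ gives the first summand the bound $C\tau^\beta\|X^N_{\ell(t)}\|_{\HH^\beta}$, whose $L^p$-norm is $O(\tau^\beta)$ via Proposition~\ref{propo:momentbounds-eem}. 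The deterministic drift is $O(\tau)$ in $L^p$ since $\|F_N(x)\|_\HH\le C(1+\|x\|_\HH)$. The stochastic summand is a Gaussian random variable whose variance is $O(\tau)$ by It\^o isometry combined with Assumption~\ref{ass:Q} and~\eqref{eq:boundsemigroup}, hence of $L^p$-size $O(\tau^{1/2})$ by Gaussian hypercontractivity. Summing $\tau^\beta+\tau+\tau^{1/2}$ yields the rate $\tau^{\min(\beta,1/2)}$.

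The heart of the proposition is~\eqref{eq:incrementbounds-u-continuousversion-eem}, which claims the improved rate $\tau$ for the $u$-component. The key observation, reflecting the fact that $du=v\,dt$ in the dynamics, is that $\Pi_u A_\gamma x=\Pi_v x$ and $\Pi_u(0,y)=0$, so that
\[
\Pi_u e^{sA_\gamma}(0,y)=\int_0^s\Pi_v e^{rA_\gamma}(0,y)\,dr,
\]
from which Lemma~\ref{lem:semigroup} gives $\|\Pi_u e^{sA_\gamma}(0,y)\|_{H^{\beta-1}}\le \Mg s\,\|y\|_{H^{\beta-1}}$, i.e.\ an extra factor of $s$ compared to $\Pi_v e^{sA_\gamma}(0,y)$. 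Applying $\Pi_u$ to the SDE~\eqref{eq:see-continuousversion-eem} and integrating from $t_{\ell(t)}$ to $t$, one obtains
\[
\tilde{u}^N(t)-u^N_{\ell(t)}=\int_{t_{\ell(t)}}^t\tilde{v}^N(r)\,dr+\int_{t_{\ell(t)}}^t\Pi_u e^{(r-t_{\ell(t)})A_\gamma}(0,f_N(u^N_{\ell(t)}))\,dr+\int_{t_{\ell(t)}}^t\Pi_u e^{(r-t_{\ell(t)})A_\gamma}(0,P_N\,dW^Q(r)).
\]
The first integral is $O(\tau)$ in $L^p$ because $\|\tilde{v}^N(r)\|_{H^{\beta-1}}\le\|\tilde{X}^N(r)\|_{\HH^\beta}$ is controlled by~\eqref{eq:momentbounds-continuousversion-eem}. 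The second integral is $O(\tau^2)$ thanks to the identity above and the global Lipschitz property of $f$. The stochastic integral has $L^2$-size $O(\tau^{3/2})$ by It\^o isometry in $H^{\beta-1}$, Assumption~\ref{ass:Q} and the same identity, and Gaussianity extends this to arbitrary $L^p$. Summing yields the required linear rate $\tau$.

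The main obstacle is precisely the identity displayed above and its use to gain one extra factor of $s$ in the $u$-direction of the semigroup acting on vectors of the form $(0,y)$. Without this gain, the argument would only reproduce the rate $\tau^{\min(\beta,1/2)}$ of~\eqref{eq:incrementbounds-continuousversion-eem}, which would be insufficient to close the weak error analysis leading to Theorem~\ref{theo:eem}.
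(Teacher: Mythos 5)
Your proposal is correct and follows essentially the same route as the paper: the same term-by-term estimates for the moment and increment bounds, and for the crucial $u$-increment bound the same idea of applying $\Pi_u$ to~\eqref{eq:see-continuousversion-eem}, isolating $\int\tilde v^N$, and gaining a factor of $s$ on the drift and noise corrections because they are supported in the $v$-component. Your identity $\Pi_u e^{sA_\gamma}(0,y)=\int_0^s\Pi_v e^{rA_\gamma}(0,y)\,dr$ is just a more explicit packaging of what the paper does by writing $\Pi_u e^{sA_\gamma}(0,y)=\Pi_u\bigl(e^{sA_\gamma}-I\bigr)(0,y)$ and invoking the (index-shifted) smoothing estimate of Lemma~\ref{lem:regulsemigroup}, whose own proof is exactly this fundamental-theorem-of-calculus argument.
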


Combining the inequalities~\eqref{eq:incrementbounds-continuousversion-eem} and~\eqref{eq:incrementbounds-u-continuousversion-eem} when $\beta\in[0,\frac12]$, with the interpolation inequality~\eqref{eq:interp} and H\"older's inequality, one obtains the following result: for all $N\in\N$, $\tau\in(0,1)$ and $t\ge 0$, one has
\begin{equation}\label{eq:incrementbounds-u-continuousversion-eem2}
\bigl(\E[\|\tilde{u}^N(t)-\tilde{u}^N(t_{\ell(t)})\|_{H^{-\beta}}^{p}]\bigr)^{\frac1p}\le C_{\beta,p}\tau^{2\beta}\bigl(1+\bigl(\E[\|x_0\|_{\HH^{\beta}}^p]\bigr)^{\frac1p}\bigr).
\end{equation}
Indeed, if $\beta\in[0,\frac12]$, one has $-\beta\in[\beta-1,0]$.

The moment bounds~\eqref{eq:momentbounds-continuousversion-eem} for $\tilde{X}^N(t)$ are straightforward consequences of the moment bounds~\eqref{eq:momentbounds-eem} from Proposition~\ref{propo:momentbounds-eem}. The increment bounds~\eqref{eq:incrementbounds-continuousversion-eem} for $\tilde{X}^N(t)$ are then obtained using the inequality~\eqref{eq:regulsemigroup} from Lemma~\ref{lem:regulsemigroup}.

Observe that one has a term of size $\tau$ in the right-hand side of~\eqref{eq:incrementbounds-u-continuousversion-eem}. This plays an important role in the proof of Theorem~\ref{theo:eem} below. Note that one needs to study the increments of $t\mapsto \tilde{u}^N(t)$ in the $\|\cdot\|_{H^{\beta-1}}$ norm to obtain this result. A similar result for the exact mild solution $\bigl(X(t)\bigr)_{t\ge 0}$ of~\eqref{eq:see}: indeed owing to~\eqref{eq:spde} the increments of the mapping $t\mapsto u(t)=\Pi_uX(t)$ satisfy
\[
u(t_2)-u(t_1)=\int_{t_1}^{t_2}v(s)ds
\]
with $v(s)=\Pi_vX(s)$, and therefore
\begin{align*}
\bigl(\E[\|u(t_2)-u(t_1)\|_{H^{\beta-1}}^p]\bigr)^{\frac{1}{p}}&\le (t_2-t_1)\underset{s\ge 0}\sup~\bigl(\E[\|v(s)\|_{H^{\beta-1}}^p]\bigr)^{\frac{1}{p}}\le (t_2-t_1)\underset{s\ge 0}\sup~\bigl(\E[\|X(s)\|_{\HH^{\beta}}^p]\bigr)^{\frac{1}{p}}\\
&\le C_{\beta,p}(t_2-t_1)\bigl(1+\bigl(\E[\|x_0\|_{\HH^{\beta}}^p]\bigr)^{\frac1p}\bigr),
\end{align*}
owing to the moment bounds~\eqref{eq:momentbounds-exact} from Proposition~\ref{propo:momentbounds-exact}. The proof of the increment bounds~\eqref{eq:incrementbounds-u-continuousversion-eem} is based on a similar approach but requires to deal with some extra terms.

\begin{proof}
Using the definition~\eqref{eq:continuousversion-eem} of $\tilde{X}^N(t)$, the inequality~\eqref{eq:boundsemigroup}, the Lipschitz continuity property~\eqref{eq:LipF_HH1} of $F$ from $\HH$ to $\HH^1$, and the inequality~\eqref{eq:comparenorms}, one obtains the following inequality: for all $m\ge 0$ and all $t\in[t_m,t_{m+1}]$, one has
\begin{align*}
\|\tilde{X}^N(t)\|_{\HH^\beta}&\le \|X_m^N\|_{\HH^\beta}+\tau\|F(X_m^N)\|_{\HH^1}+\|\IW^Q(t)-\IW^Q(t_m)\|_{\HH^\beta}\\
&\le \|X_m^N\|_{\HH^\beta}+C\tau(1+\|X_m^N\|_{\HH})+\|\IW^Q(t)-\IW^Q(t_m)\|_{\HH^\beta}.
\end{align*}
Recall that the Wiener process $\bigl(\IW^Q(t)\bigr)_{t\ge 0}$ takes values in $\HH^\beta$, therefore the equality~\eqref{eq:incrementsWQ} holds. Then using the condition $t-t_m\le \tau$, the moment bounds~\eqref{eq:momentbounds-eem} from Proposition~\ref{propo:momentbounds-eem} and Minkowskii's inequality, one obtains the moment bounds~\eqref{eq:momentbounds-continuousversion-eem}.

Let us now prove the increment bounds~\eqref{eq:incrementbounds-continuousversion-eem} for $t\mapsto \tilde{X}^N(t)$. Using the inequality~\eqref{eq:regulsemigroup} from Lemma~\ref{lem:regulsemigroup} and the Lipschitz continuity property~\eqref{eq:LipF_HH0} of $F$ from $\HH$ to $\HH$, for all $m\ge 0$ and $t\in[t_m,t_{m+1}]$, one has
\begin{align*}
\|\tilde{X}^N(t)-X_m^N\|_{\HH}&\le \|(e^{(t-t_m)A_\gamma}-I)X_m^N\|_{\HH}+\tau\|F(X_m^N)\|_{\HH}+\|\IW^Q(t)-\IW^Q(t_m)\|_{\HH}\\
&\le C_\beta \tau^\beta\|X_m^N\|_{\HH^\beta}+C\tau(1+\|X_m^N\|_{\HH})+\|\IW^Q(t)-\IW^Q(t_m)\|_{\HH}.
\end{align*}
Using the equality~\eqref{eq:incrementsWQ}, the condition $t-t_m\le \tau$, the moment bounds~\eqref{eq:momentbounds-eem} from Proposition~\ref{propo:momentbounds-eem} and Minkowskii's inequality, one obtains the increment bounds~\eqref{eq:incrementbounds-continuousversion-eem}.

It remains to prove the increment bounds~\eqref{eq:incrementbounds-u-continuousversion-eem} for $t\mapsto \tilde{u}^N(t)$. The stochastic evolution equation~\eqref{eq:see-continuousversion-eem} for the auxiliary process $\bigl(\tilde{X}^N(t)\bigr)_{t\ge 0}$ is written as the following system: for all $m\ge 0$ and $t\in[t_m,t_{m+1}]$, one has
\begin{equation}\label{eq:continuousversion-eem1}
\left\lbrace
\begin{aligned}
&d\tilde{u}^N(t)=\tilde{v}^N(t)dt+\Pi_ue^{(t-t_m)A_\gamma}F_N(X^N_m)dt+\Pi_ue^{(t-t_m)A_\gamma}\IP_Nd\IW^Q(t),\\
&d\tilde{v}^N(t)=(-\IL \tilde{u}^N(t)-2\gamma \tilde{v}^N(t))\,dt+\Pi_ve^{(t-t_m)A_\gamma}F_N(X^N_m)dt+\Pi_ve^{(t-t_m)A_\gamma}\IP_Nd\IW^Q(t).
\end{aligned}
\right.
\end{equation}
As a consequence, since $u_m^N=\tilde{u}^N(t_m)$, one obtains the identities
\begin{align*}
\tilde{u}^N(t)-u^N_m&=\int_{t_m}^t \tilde{v}^N(s)ds+\int_{t_m}^t\Pi_ue^{(s-t_m)A_\gamma}F_N(X^N_m)ds+\int_{t_m}^t\Pi_ue^{(s-t_m)A_\gamma}\IP_Nd\IW^Q(s)\\
&=\int_{t_m}^t \tilde{v}^N(s)ds+\Pi_u\int_{t_m}^t\bigl(e^{(s-t_m)A_\gamma}-I\bigr)F_N(X^N_m)ds+\Pi_u\int_{t_m}^t\bigl(e^{(s-t_m)A_\gamma}-I\bigr)\IP_Nd\IW^Q(s),
\end{align*}
using the properties
\[
\Pi_uF_N(X^N_m)=0~,\quad \int_{t_m}^t\Pi_u\IP_Nd\IW^Q(s)=0
\]
to obtain the second equality. Note that $\beta-1\le 0$. Using the inequality~\eqref{eq:semigroup},~\eqref{eq:regulsemigroup} and~\eqref{eq:comparenorms}, the Lipschitz continuity property~\eqref{eq:LipF_HH0} of $F$ from $\HH$ to $\HH$, and Minkowskii's inequality, one obtains
\begin{align*}
\bigl(\E[\|\tilde{u}^N(t)-u_m^N\|_{H^{\beta-1}}^p]\bigr)^{\frac{1}{p}}&\le \int_{t_m}^{t}\bigl(\E[\|\tilde{v}^N(s)\|_{H^{\beta-1}}^p]\bigr)^{\frac{1}{p}}ds+C\tau\Bigl(1+\big(\E[\|X_m^N\|_{\HH}^p]\bigr)^{\frac{1}{p}}\Bigr)\\
&+\bigl(\E[\|\int_{t_m}^t\bigl(e^{(s-t_m)A_\gamma}-I\bigr)\IP_Nd\IW^Q(s)\|_{\HH^{\beta-1}}^p]\bigr)^\frac{1}{p}\\
&\le \tau\underset{s\ge 0}\sup~\E[\|\tilde{X}^N(s)\|_{\HH^\beta}^p]\bigr)^{\frac{1}{p}}+C\tau\Bigl(1+\big(\E[\|X_m^N\|_{\HH}^p]\bigr)^{\frac{1}{p}}\Bigr)\\
&+C\tau\bigl(\E[\|\IW^Q(t)-\IW^Q(t_m)\|_{\HH^\beta}^p]\bigr)^{\frac{1}{p}}.
\end{align*}
Using the moment bounds~\eqref{eq:momentbounds-continuousversion-eem} and~\eqref{eq:momentbounds-eem} for $\tilde{X}^N(s)$ and $X_m^N$ respectively, and the increment bounds~\eqref{eq:incrementsWQ} for $\IW^Q(t)-\IW^Q(t_m)$, one obtains~\eqref{eq:incrementbounds-u-continuousversion-eem}.

The proof of Proposition~\ref{propo:continuousversion-eem} is thus completed.
\end{proof}

\subsection{Proof of Theorem~\ref{theo:eem}}\label{sec:prooffully}

Let $\varphi:\HH\to\R$ be a mapping of class $\mathcal{C}^2$, with bounded first and second order derivatives. Without loss of generality in the sequel it is assumed that $\vvvert\varphi\vvvert_1+\vvvert\varphi\vvvert_2\le 1$ to simplify the computations. In this section, the value of the spatial discretization parameter $N\in\N$ is fixed.

The objective of this section is to prove the inequality~\eqref{eq:theo_eem} from Theorem~\ref{theo:eem}. Recall that the mapping $\Phi^N$ is defined by~\eqref{eq:Phi} (see Section~\ref{sec:aux}) and that the auxiliary process $\bigl(\tilde{X}^N(t)\bigr)_{t\ge 0}$ is defined by~\eqref{eq:continuousversion-eem}. Since $X^N(0)=X_N^0=\IP_Nx_0$, the left-hand side of~\eqref{eq:theo_eem} can be written as follows: for all $M\ge 0$ one has
\begin{equation}\label{eq:decomp}
\begin{aligned}
\E[\varphi(X^N(t_M))]-\E[\varphi(X^N_M)]&=\E[\Phi^N(t_M,X^N_0)]-\E[\Phi^N(0,X^N_M)]\\
&=\sum_{m=0}^{M-1}\Bigl(\E[\Phi^N(t_M-t_m,X_m^N)]-\E[\Phi^N(t_M-t_{m+1},X_{m+1}^N)]\Bigr)\\
&=\sum_{m=0}^{M-1}\Bigl(\E[\Phi^N(t_M-t_m,\tilde{X}^N(t_m))]-\E[\Phi^N(t_M-t_{m+1},\tilde{X}^N(t_{m+1})]\Bigr),
\end{aligned}
\end{equation}
owing to a standard telescoping sum argument.

Let $m\in\{0,\ldots,M-1\}$. Recall that the mapping $\Phi^N$ is solution of the Kolmogorov equation~\eqref{eq:Kolmogorov}. On the time interval $[t_m,t_{m+1}]$, $t\mapsto \tilde{X}^N(t)$ is solution of the stochastic evolution equation~\eqref{eq:see-continuousversion-eem} with $t_{\ell(t)}=t_m$. Applying It\^o's formula then yields the decomposition
\begin{equation}\label{eq:decompepsilon}
\E[\Phi^N(t_M-t_m,\tilde{X}^N(t_m))]-\E[\Phi^N(t_M-t_{m+1},\tilde{X}^N(t_{m+1}))]=\varepsilon_m^1+\varepsilon_m^2+\varepsilon_m^3+\varepsilon_m^4,
\end{equation}
with error terms defined by
\begin{align*}
\varepsilon_m^1&=\int_{t_m}^{t_{m+1}}\E\Bigl[D\Phi^N(t_M-t,\tilde{X}^N(t)).\Bigl(\bigl(e^{(t-t_m)A_\gamma}-I\bigr)F_N(X^N_m)\Bigr)\Bigr]dt\\
\varepsilon_m^2&=\int_{t_m}^{t_{m+1}}\E\bigl[D\Phi^N(t_M-t,\tilde{X}^N(t)).\bigl(F_N(X^N_m)-F_N(\tilde{X}^N(t))\bigr)\bigr]dt\\
\varepsilon_m^3&=\frac{1}{2}\int_{t_m}^{t_{m+1}}\E\Bigl[\sum_{n\in\N}D^2\Phi^N(t_M-t,\tilde{X}^N(t)).\bigl((e^{(t-t_m)A_\gamma}-I)\IP_N(0,Q^{1/2}e^Q_n),e^{(t-t_m)A_\gamma}\IP_N(0,Q^{1/2}e^Q_n)\bigr)\Bigr]dt\\
\varepsilon_m^4&=\frac{1}{2}\int_{t_m}^{t_{m+1}}\E\Bigl[\sum_{n\in\N}D^2\Phi^N(t_M-t,\tilde{X}^N(t)).\bigl(\IP_N(0,Q^{1/2}e^Q_n),(e^{(t-t_m)A_\gamma}-I)\IP_N(0,Q^{1/2}e^Q_n)\bigr)\Bigr]dt.
\end{align*}

Owing to the inequality~\eqref{eq:DPhi} from Proposition~\ref{propo:Phi}, and to the inequality~\eqref{eq:regulsemigroup} from Lemma~\ref{lem:regulsemigroup}, and using the Lipschitz continuity property~\eqref{eq:LipF_HH1} of $F$ from $\HH$ to $\HH^1$, for the error term $\epsilon_m^1$ one obtains the following upper bounds:
\begin{align*}
|\varepsilon_m^1|&\le \int_{t_m}^{t_{m+1}}\E\Bigl[\Big|D\Phi^N(t_M-t,\tilde{X}^N(t)).\Bigl(\bigl(e^{(t-t_m)A_\gamma}-I\bigr)F_N(X^N_m)\Bigr)\Big|\Bigr]dt\\
&\le \int_{t_m}^{t_{m+1}}e^{-\rho(\gamma,f)(t_M-t)}\E\bigl[\|\bigl(e^{(t-t_m)A_\gamma}-I\bigr)F_N(X^N_m)\|_{\HH}\bigr]dt\\
&\le C\tau \int_{t_m}^{t_{m+1}}e^{-\rho(\gamma,f)(t_M-t)}\E\bigl[\|F_N(X^N_m)\|_{\HH^1}\bigr]dt\\
&\le C\tau \int_{t_m}^{t_{m+1}}e^{-\rho(\gamma,f)(t_M-t)}\E\bigl[\bigl(1+\|X^N_m\|_{\HH}\bigr)\bigr]dt
\end{align*}
Finally, using the moment bounds~\eqref{eq:momentbounds-eem} from Proposition~\ref{propo:momentbounds-eem}, one has the following upper bounds for the error term $\varepsilon_m^1$:
\begin{equation}\label{eq:epsilon1}
|\varepsilon_m^1|\le C\tau\bigl(1+\E[\|x_0\|_\HH]\bigr) \int_{t_m}^{t_{m+1}}e^{-\rho(\gamma,f)(t_M-t)}dt~,\quad \forall m\in\{0,\ldots,M-1\}.
\end{equation}

To treat the error term $\varepsilon_m^2$, it is crucial to recall that $F(x)=(0,f(u))$ for all $x=(u,v)\in\HH=H\times H^{-1}$. As a consequence, using the inequality~\eqref{eq:DPhi} from Proposition~\ref{propo:Phi}, one obtains the following upper bounds:
\begin{align*}
|\varepsilon_m^2|&\le \int_{t_m}^{t_{m+1}}\E\bigl[\big|D\Phi^N(t_M-t,\tilde{X}^N(t)).\bigl(F_N(X^N_m)-F_N(\tilde{X}^N(t))\bigr) \big|\bigr]dt\\
&\le \int_{t_m}^{t_{m+1}}e^{-\rho(\gamma,f)(t_M-t)}\E\bigl[\|F_N(X^N_m)-F_N(\tilde{X}^N(t))\|_{\HH}\bigr]dt\\
&\le \int_{t_m}^{t_{m+1}}e^{-\rho(\gamma,f)(t_M-t)}\E\bigl[\|f(u^N_m)-f(\tilde{u}^N(t))\|_{\HH^{-1}}\bigr]dt.
\end{align*}
It is then necessary to treat separately the cases $\beta\in[0,\frac12]$ and $\beta\in[\frac12,1]$. On the one hand, if $\beta\in[0,\frac12]$, using the condition~\eqref{eq:Dfalpha} from Assumption~\ref{ass:F} (with $\alpha=\beta$), the inequality~\eqref{eq:incrementbounds-u-continuousversion-eem2} (which follows from the inequalities~\eqref{eq:incrementbounds-continuousversion-eem} and~\eqref{eq:incrementbounds-u-continuousversion-eem} from Proposition~\ref{propo:continuousversion-eem}), the moment bounds~\eqref{eq:momentbounds-eem} and~\eqref{eq:momentbounds-continuousversion-eem} and H\"older's inequality, one obtains for all $t\in[t_m,t_{m+1}]$
\begin{align*}
\E\bigl[\|f(u^N_m)-f(\tilde{u}^N(t))\|_{\HH^{-1}}\bigr]&\le C_\beta \bigl(1+\E[\|u^N_m\|_{\HH^{\beta}}^2]+\E[\|\tilde{u}(t)\|_{\HH^\beta}^2\bigr)^{\frac12}\bigl(\E[\|u^N_m-\tilde{u}^N(t)\|_{\HH^{-\beta}}^2]\bigr)^{\frac12}\\
&\le C_\beta\tau^{2\beta}\bigl(1+\E[\|x_0\|_{\HH^\beta}^2]\bigr).
\end{align*}
On the other hand, if $\beta\in[\frac12,1]$, using the inequality~\eqref{eq:Dfalpha2} (which follows from the condition~\eqref{eq:Dfalpha} from Assumption~\ref{ass:F}), the inequality~\eqref{eq:incrementbounds-u-continuousversion-eem} from Proposition~\ref{propo:continuousversion-eem}, the moment bounds~\eqref{eq:momentbounds-eem} and~\eqref{eq:momentbounds-continuousversion-eem} and H\"older's inequality, one obtains for all $t\in[t_m,t_{m+1}]$
\begin{align*}
\E\bigl[\|f(u^N_m)-f(\tilde{u}^N(t))\|_{\HH^{-1}}\bigr]&\le C_\beta \bigl(1+\E[\|u^N_m\|_{\HH^{\beta}}^2]+\E[\|\tilde{u}(t)\|_{\HH^\beta}^2\bigr)^{\frac12}\bigl(\E[\|u^N_m-\tilde{u}^N(t)\|_{\HH^{\beta-1}}^2]\bigr)^{\frac12}\\
&\le C_\beta\tau\bigl(1+\E[\|x_0\|_{\HH^\beta}^2]\bigr).
\end{align*}
Finally, one has the following upper bounds for the error term $\varepsilon_m^2$:
\begin{equation}\label{eq:epsilon2}
|\varepsilon_m^2|\le C_\beta\tau^{\min(2\beta,1)}\bigl(1+\E[\|x_0\|_{\HH^\beta}^2]\bigr) \int_{t_m}^{t_{m+1}}e^{-\rho(\gamma,f)(t_M-t)}dt~,\quad \forall m\in\{0,\ldots,M-1\}.
\end{equation}

The error terms $\epsilon_m^3$ and $\epsilon_m^4$ are treated with the same arguments. Using the inequality~\eqref{eq:D2Phi} from Proposition~\ref{propo:Phi} and the inequality~\eqref{eq:expodecrease} from Lemma~\ref{lem:semigroup}, one obtains
\[
|\epsilon_m^3|+|\epsilon_m^4|\le C\int_{t_m}^{t_{m+1}}e^{-\rho(\gamma,f)(t_M-t)}\sum_{n\in\N}\|(0,Q^{1/2}e_n^Q)\|_{\HH}\|(e^{(t-t_m)A_\gamma}-I)(0,Q^{1/2}e_n^Q)\|_{\HH}dt.
\]
Using the inequality~\eqref{eq:regulsemigroup} from Lemma~\ref{lem:regulsemigroup}, one then obtains
\begin{align*}
|\epsilon_m^3|+|\epsilon_m^4|&\le C_\beta\tau^\beta\int_{t_m}^{t_{m+1}}e^{-\rho(\gamma,f)(t_M-t)}dt\sum_{n\in\N}\|(0,Q^{1/2}e_n^Q)\|_{\HH}\|(0,Q^{1/2}e_n^Q)\|_{\HH^\beta}\\
&\le C_\beta\tau^\beta\int_{t_m}^{t_{m+1}}e^{-\rho(\gamma,f)(t_M-t)}dt\sum_{n\in\N}\|Q^{1/2}e_n^Q\|_{H^{-1}}\|Q^{1/2}e_n^Q\|_{H^{\beta-1}}\\
&\le C_\beta\tau^\beta\int_{t_m}^{t_{m+1}}e^{-\rho(\gamma,f)(t_M-t)}dt\sum_{n\in\N}\|Q^{1/2}e_n^Q\|_{H^{\beta-1}}^2\\
&\le C_\beta\tau^\beta\int_{t_m}^{t_{m+1}}e^{-\rho(\gamma,f)(t_M-t)}dt,
\end{align*}
assuming that $\beta$ satisfies the condition~\eqref{eq:Qbeta}.

Let us now get a higher rate of convergence in the commutative noise case. If $\beta\in[0,\frac12]$, one has
\begin{align*}
|\epsilon_m^3|+|\epsilon_m^4|&\le C_\beta\tau^{2\beta}\int_{t_m}^{t_{m+1}}e^{-\rho(\gamma,f)(t_M-t)}dt\sum_{n\in\N}\|(0,Q^{1/2}e_n^Q)\|_{\HH}\|(0,Q^{1/2}e_n^Q)\|_{\HH^{2\beta}}\\
&\le C_\beta\tau^{2\beta}\int_{t_m}^{t_{m+1}}e^{-\rho(\gamma,f)(t_M-t)}dt\sum_{n\in\N}\|Q^{1/2}e_n^Q\|_{H^{-1}}\|Q^{1/2}e_n^Q\|_{H^{2\beta-1}}\\
&\le C_\beta\tau^{2\beta}\int_{t_m}^{t_{m+1}}e^{-\rho(\gamma,f)(t_M-t)}dt\sum_{n\in\N}\|Q^{1/2}e_n^Q\|_{H^{\beta-1}}^2\\
&\le C_\beta\tau^{2\beta}\int_{t_m}^{t_{m+1}}e^{-\rho(\gamma,f)(t_M-t)}dt,
\end{align*}
while if $\beta\in[\frac12,1]$, one has
\begin{align*}
|\epsilon_m^3|+|\epsilon_m^4|&\le C\tau\int_{t_m}^{t_{m+1}}e^{-\rho(\gamma,f)(t_M-t)}dt\sum_{n\in\N}\|(0,Q^{1/2}e_n^Q)\|_{\HH}\|(0,Q^{1/2}e_n^Q)\|_{\HH^{1}}\\
&\le C\tau\int_{t_m}^{t_{m+1}}e^{-\rho(\gamma,f)(t_M-t)}dt\sum_{n\in\N}\|Q^{1/2}e_n^Q\|_{H^{-1}}\|Q^{1/2}e_n^Q\|_{H}\\
&\le C\tau\int_{t_m}^{t_{m+1}}e^{-\rho(\gamma,f)(t_M-t)}dt\sum_{n\in\N}\|Q^{1/2}e_n^Q\|_{H^{-\frac12}}^2\\
&\le C\tau\int_{t_m}^{t_{m+1}}e^{-\rho(\gamma,f)(t_M-t)}dt\sum_{n\in\N}\|Q^{1/2}e_n^Q\|_{H^{\beta-1}}^2\\
&\le C_\beta\tau\int_{t_m}^{t_{m+1}}e^{-\rho(\gamma,f)(t_M-t)}dt,
\end{align*}
using the inequality $-\frac12\le \beta-1$.

Recall that the definition of the parameter $\theta$: $\theta=\frac12$ in the general case and $\theta=1$ in the commutative noise case. Finally, one obtains the following error bounds for the error terms $\varepsilon_m^3$ and $\varepsilon_m^4$:
\begin{equation}\label{eq:epsilon34}
|\varepsilon_m^3|+|\varepsilon_m^4|\le C_\beta
\tau^{\min(2\theta\beta,1)}\int_{t_m}^{t_{m+1}}e^{-\rho(\gamma,f)(t_M-t)}dt~,\quad \forall m\in\{0,\ldots,M-1\}.
\end{equation}

It remains to gather the bounds~\eqref{eq:epsilon1},~\eqref{eq:epsilon2} and~\eqref{eq:epsilon34} for the error terms appearing in the right-hand side of the decomposition of the local weak error~\eqref{eq:decompepsilon}. Note that for all $M\ge 0$ one has
\[
\int_{0}^{t_M}e^{-\rho(\gamma,f)(t_M-t)}dt\le \int_{0}^{\infty}e^{-\rho(\gamma,f)t}dt=\rho(\gamma,f)^{-1}.
\]
Using the decomposition~\eqref{eq:decomp} of the global weak error, one obtains
\[
\big|\E[\varphi(X^N(t_M))]-\E[\varphi(X^N_M)]\big|\le C_\beta \tau^{\min(2\theta\beta,1)}\bigl(1+\E[\|x_0\|_{\HH^\beta}^2]\bigr),
\]
which concludes the proof of the error estimate~\eqref{eq:theo_eem}. Combining that result with Propositions~\ref{propo:ergo-eem} and~\ref{propo:ergo-galerkin}, choosing an arbitrary initial value $x_0$ and letting $M\to\infty$ yields~\eqref{eq:theo_eem-measures}. The proof of Theorem~\ref{theo:eem} is thus completed.

\begin{appendix}
\section{Proof of Lemma~\ref{lem:semigroup}}\label{sec:prooflemmasemigroup}

The techniques of the proof are similar to those used in~\cite[Lemma 5.1]{Salins:19}.

\begin{proof}
Let $\gamma\in(0,\infty)$. Owing to the expression~\eqref{eq:semigroup} of $e^{tA_\gamma}x$, it is sufficient to prove that for all $n\in\N$ the solution $t\mapsto (u_n(t),v_n(t))$ of~\eqref{eq:unvn} satisfies the following inequality: for all $n\in\N$ and all $t\ge 0$ one has
\begin{equation}\label{ineq}
|u_n(t)|^2+\lambda_n^{-1}|v_n(t)|^2\le (\Mg)^2 e^{-2\rg t}\bigl(|u_n(0)|^2+\lambda_n^{-1}|v_n(0)|^2\bigr).
\end{equation}
In the sequel, the value of $n\in\N$ is fixed. Introduce the auxiliary parameters
\[
\theta_{n,\gamma}=\frac{\min(\lambda_n,\gamma^2)}{2\gamma}~,\quad \Theta_{n,\gamma}=\theta_{n,\gamma}^2-2\gamma\theta_{n,\gamma}+\lambda_n,
\]
and the auxiliary functions, defined by
\[
\tilde{w}_{n,\gamma}(t)=e^{\theta_{n,\gamma}t}u_n(t),\quad \forall~t\ge 0.
\]
Straightforward computations show that $\tilde{w}_{n,\gamma}$ is solution of the linear second-order differential equation
\begin{equation}\label{eq:tildew}
\tilde{w}_{n,\gamma}''(t)+2\bigl(\gamma-\theta_{n,\gamma}\bigr)\tilde{w}_{n,\gamma}'(t)+\Theta_{n,\gamma}\tilde{w}_{n,\gamma}(t)=0,
\end{equation}
with initial values $\tilde{w}_{n,\gamma}(0)=u_n(0)$ and $\tilde{w}_{n,\gamma}'(0)=v_n(0)+\theta_{n,\gamma}u_n(0)$.

We claim that the following equalities hold: for all $t\ge 0$ one has
\begin{equation} \label{eq1}
|\tilde{w}_{n,\gamma}'(t)|^2+\Theta_{n,\gamma}|\tilde{w}_{n,\gamma}(t)|^2
+4\bigl(\gamma-\theta_{n,\gamma}\bigr)\int_0^t|\tilde{w}_{n,\gamma}'(s)|^2ds\le |\tilde{w}_{n,\gamma}'(0)|^2+\Theta_{n,\gamma}|\tilde{w}_{n,\gamma}(0)|^2
\end{equation}
and
\begin{equation}\label{eq2}
\begin{aligned}
|\tilde{w}_{n,\gamma}'(t)+2(\gamma-\theta_{n,\gamma})\tilde{w}_{n,\gamma}(t)|^2+\Theta_{n,\gamma}\Bigl(|\tilde{w}_{n,\gamma}(t)|^2&+4(\gamma-\Theta_{n,\gamma})\int_{0}^{t}|\tilde{w}_{n,\gamma}(s)|^2ds\Bigr)\\
&=|\tilde{w}_{n,\gamma}'(0)+2(\gamma-\theta_{n,\gamma})\tilde{w}_{n,\gamma}(0)|^2+\Theta_{n,\gamma}|\tilde{w}_{n,\gamma}(0)|^2.
\end{aligned}
\end{equation}
To prove~\eqref{eq1}, it suffices to multiply the left-hand side of~\eqref{eq:tildew} by $\tilde{w}_{n,\gamma}'(t)$ to obtain the identity
\[
\frac{1}{2}\frac{d|\tilde{w}_{n,\gamma}'(t)|^2}{dt}+2\bigl(\gamma-\theta_{n,\gamma}\bigr)|\tilde{w}_{n,\gamma}'(t)|^2+\frac{\Theta_{n,\gamma}}{2}\frac{d|\tilde{w}_{n,\gamma}(t)|^2}{dt}=0
\]
and to integrate from $0$ to $t$. To prove~\eqref{eq2}, it suffices to check the identity
\begin{align*}
\frac12\frac{d|\tilde{w}_{n,\gamma}'(t)+2(\gamma-\theta_{n,\gamma})\tilde{w}_{n,\gamma}(t)|^2}{dt}&=\Bigl(\tilde{w}_{n,\gamma}''(t)+2(\gamma-\theta_{n,\gamma})\tilde{w}_{n,\gamma}'(t)\Bigr)\Bigl(\tilde{w}_{n,\gamma}'(t)+2(\gamma-\theta_{n,\gamma})\tilde{w}_{n,\gamma}(t)\Bigr)\\
&=-\Theta_{n,\gamma}\tilde{w}_{n,\gamma}(t)\Bigl(\tilde{w}_{n,\gamma}'(t)+2(\gamma-\theta_{n,\gamma})\tilde{w}_{n,\gamma}(t)\Bigr)\\
&=-\Theta_{n,\gamma}\Bigl(\frac12\frac{d|\tilde{w}_{n,\gamma}(t)|^2}{dt}+2(\gamma-\theta_{n,\gamma})|\tilde{w}_{n,\gamma}(t)|^2\Bigr)
\end{align*}
and to integrate from $0$ to $t$.

We are now in position to proceed with the proof of the inequality~\eqref{ineq}. Two cases need to be treated separately.

First, assume that the condition $\gamma^2\le\lambda_n$ is satisfied. In that case one has
\begin{align*}
&\theta_{n,\gamma}=\gamma-\theta_{n,\gamma}=\frac{\gamma}{2}>0,~\\
&\Theta_{n,\gamma}=\theta_{n,\gamma}^2-2\gamma\theta_{n,\gamma}+\lambda_n=\lambda_n-\frac{3\gamma^2}{4}\ge \frac{\lambda_n}{4}>0.
\end{align*}
Owing to the lower bounds above and to the inequality~\eqref{eq1}, for all $t\ge 0$ one has
\begin{align*}
|\tilde{w}_{n,\gamma}'(t)|^2&\le |\tilde{w}_{n,\gamma}'(0)|^2+\bigl(\lambda_n-\frac{3\gamma^2}{4}\bigr)|\tilde{w}_{n,\gamma}(0)|^2\\
&\le C(\gamma)\Bigl(|v_n(0)|^2+\lambda_n|u_n(0)|^2\Bigr),
\end{align*}
where $C(\gamma)\in(0,\infty)$ is a positive real number (independent of $n\in\N$ and $t\ge 0$). In addition, owing to the lower bounds above and to the inequality~\eqref{eq2}, for all $t\ge 0$, one has
\begin{align*}
\frac{\lambda_n}{4}|\tilde{w}_{n,\gamma}(t)|^2&\le \Theta_{n,\gamma}|\tilde{w}_{n,\gamma}(t)|^2\\
&\le |\tilde{w}_{n,\gamma}'(0)+2(\gamma-\theta_{n,\gamma})\tilde{w}_{n,\gamma}(0)|^2+\Theta_{n,\gamma}|\tilde{w}_{n,\gamma}(0)|^2\\
&\le C(\gamma)\Bigl(|v_n(0)|^2+\lambda_n|u_n(0)|^2\Bigr).
\end{align*}
By the definition $\tilde{w}_{n,\gamma}(t)=e^{\theta_{n,\gamma}t}u_n(t)=e^{\gamma t/2}u_n(t)$ of the auxiliary function $\tilde{w}_{n,\gamma}$, one obtains the identities
\begin{align*}
&u_n(t)=e^{-\gamma t/2}\tilde{w}_{n,\gamma}(t)\\
&v_n(t)=u_n'(t)=e^{-\gamma t/2}\Bigl(\tilde{w}_{n,\gamma}'(t)-\frac{\gamma}{2}\tilde{w}_{n,\gamma}(t)\Bigr).
\end{align*}
Finally using the inequalities above yields the inequality~\eqref{ineq} when the condition $\gamma^2\le \lambda_n$ is satisfied.

Second, assume that the condition $\lambda_n\le \gamma^2$ is satisfied. In that case one has
\begin{align*}
&\theta_{n,\gamma}=\frac{\lambda_n}{2\gamma}\\
&\gamma-\theta_{n,\gamma}=\gamma(1-\frac{\lambda_n}{2\gamma^2})\ge \frac{\gamma}{2}\in[0,\frac{\gamma}{2}]\\
&\Theta_{n,\gamma}=\theta_{n,\gamma}^2-2\gamma\theta_{n,\gamma}+\lambda_n=\theta_{n,\gamma}^2=\frac{\lambda_n^2}{4\gamma^2}\in[0,\frac{\lambda_n}{4}].
\end{align*}
Owing to the lower bounds above and to the inequality~\eqref{eq1}, for all $t\ge 0$ one has
\begin{align*}
|\tilde{w}_{n,\gamma}'(t)|^2\le |\tilde{w}_{n,\gamma}'(0)|^2+\frac{\lambda_n^2|\tilde{w}_{n,\gamma}(0)|^2}{4\gamma^2}\le |\tilde{w}_{n,\gamma}'(0)|^2+\frac{\lambda_n|\tilde{w}_{n,\gamma}(0)|^2}{4}.
\end{align*}
In addition, using the bounds above and owing to the inequality~\eqref{eq2}, for all $t\ge 0$, one has
\begin{align*}
\gamma^2|\tilde{w}_{n,\gamma}(t)|^2&\le 4(\gamma-\theta_{n,\gamma})^2|\tilde{w}_{n,\gamma}(t)|^2\\
&\le 2|\tilde{w}_{n,\gamma}'(t)+2(\gamma-\theta_{n,\gamma})\tilde{w}_{n,\gamma}(t)|^2+2|\tilde{w}_{n,\gamma}'(t)|^2\\
&\le 2|\tilde{w}_{n,\gamma}'(0)+2(\gamma-\theta_{n,\gamma})\tilde{w}_{n,\gamma}(0)|^2+\frac{\lambda_n^2}{2\gamma^2}|\tilde{w}_{n,\gamma}(0)|^2+2|\tilde{w}_{n,\gamma}'(0)|^2+\frac{\lambda_n^2|\tilde{w}_{n,\gamma}(0)|^2}{2\gamma^2}\\
&\le C(\gamma)\Bigl(|\tilde{w}_{n,\gamma}'(0)|^2+|\tilde{w}_{n,\gamma}(0)|^2\Bigr),
\end{align*}
where $C(\gamma)\in(0,\infty)$ is a positive real number (independent of $n\in\N$ and $t\ge 0$).

By the definition $\tilde{w}_{n,\gamma}(t)=e^{\theta_{n,\gamma}t}u_n(t)=e^{\frac{\lambda_n t}{2\gamma}}u_n(t)$ of the auxiliary function $\tilde{w}_{n,\gamma}$, one obtains the identities
\begin{align*}
&u_n(t)=e^{-\frac{\lambda_n t}{2\gamma}}\tilde{w}_{n,\gamma}(t)\\
&v_n(t)=u_n'(t)=e^{-\frac{\lambda_n t}{2\gamma}}\Bigl(\tilde{w}_{n,\gamma}'(t)-\frac{\lambda_n}{2\gamma}\tilde{w}_{n,\gamma}(t)\Bigr),
\end{align*}
and using the inequalities $\lambda_1\le \lambda_n\le \gamma^2$ in the considered case, using the inequalities above yields the inequality~\eqref{ineq} when the condition $\lambda_n\le \gamma^2$ is satisfied.

Finally the inequality~\eqref{ineq} is proved in all situations (where the positive real numbers $\Mg$ and $\rg$ are independent of $n\in\N$ and $t\ge 0$). The proof of Lemma~\ref{lem:semigroup} is thus completed.
\end{proof}

\end{appendix}

\section{Acknowledgements}
The work of C.-E.~B. is partially supported by the projects ADA (ANR-19-CE40-0019-02) and SIMALIN (ANR-19-CE40-0016) operated by the French National Research Agency. The work of Z. Lei and S. Gan are supported by NSF of China (No. 11971488). The work of Z. Lei is supported by the China Scholarship Council (No. 202206370085).


\end{document}